\documentclass[11pt]{amsart}
\usepackage{txfonts}
\usepackage{amscd,amssymb,amsmath,amsbsy,amsthm}
\usepackage[colorlinks,plainpages,backref,urlcolor=blue,breaklinks]{hyperref}
\usepackage{tikz}
\usetikzlibrary{cd,calc,intersections}
\usetikzlibrary{svg.path}
\usepackage{enumitem}
\usepackage{nccmath, relsize}
\usepackage{xfrac}
\usepackage{microtype}

\newtheorem{theorem}{Theorem}[section]
\newtheorem{lemma}[theorem]{Lemma}
\newtheorem{corollary}[theorem]{Corollary}
\newtheorem{proposition}[theorem]{Proposition}

\theoremstyle{definition}
\newtheorem{definition}[theorem]{Definition}
\newtheorem{example}[theorem]{Example}
\newtheorem{remark}[theorem]{Remark}
\newtheorem{question}[theorem]{Question}

\newtheorem*{ack}{Acknowledgment}

\newcommand{\Z}{\mathbb{Z}}
\newcommand{\Q}{\mathbb{Q}}
\newcommand{\R}{\mathbb{R}}
\newcommand{\C}{\mathbb{C}}

\newcommand{\K}{\mathbb{K}}
\newcommand{\LL}{\mathbb{L}}

\newcommand{\T}{\mathbb{T}}

\renewcommand{\u}{\mathbf{u}}
\newcommand{\x}{\mathbf{x}}
\newcommand{\y}{\mathbf{y}}
\newcommand{\bt}{\mathbf{t}}

\renewcommand{\k}{\Bbbk}
\newcommand{\RR}{\mathcal{R}}
\newcommand{\VV}{\mathcal{V}}
\newcommand{\A}{{\mathcal{A}}}
\newcommand{\B}{{\mathcal{B}}}

\newcommand{\wV}{\mathcal{W}}

\newcommand{\XX}{{\mathcal X}}

\DeclareMathOperator{\rank}{rank}

\DeclareMathOperator{\im}{im}

\DeclareMathOperator{\codim}{codim}

\DeclareMathOperator{\id}{id}
\DeclareMathOperator{\ab}{{ab}}

\DeclareMathOperator{\supp}{supp}

\DeclareMathOperator{\Hom}{{Hom}}
\DeclareMathOperator{\Tor}{{Tor}}

\DeclareMathOperator{\ann}{{ann}}

\DeclareMathOperator{\Spec}{{Spec}}

\DeclareMathOperator{\init}{in}

\DeclareMathOperator{\Tors}{Tors}

\DeclareMathOperator{\TC}{TC}
\DeclareMathOperator{\Trop}{Trop}
\DeclareMathOperator{\trop}{trop}
\DeclareMathOperator{\orb}{orb}
\DeclareMathOperator{\lcm}{{lcm}}

\DeclareMathOperator{\tor}{Tors}
\DeclareMathOperator{\val}{val}

\DeclareMathOperator{\Newt}{Newt}
\DeclareMathOperator{\Cay}{Cay}
\DeclareMathOperator{\conv}{conv}

\newcommand{\wX}{\widetilde{X}}
\DeclareMathOperator{\alg}{alg-gp}

\newcommand{\surj}{\twoheadrightarrow}
\newcommand{\inj}{\hookrightarrow}
\newcommand{\isom}{\xrightarrow{
   \,\smash{\raisebox{-0.6ex}{\ensuremath{\scriptstyle\simeq}}}\,}}
\newcommand{\compl}{\mathrm{c}}

\newcommand{\abs}[1]{\left| #1 \right|}

\newcommand{\parr}[1]{(\!( #1 )\!)}

\def\set#1{{\{ #1\}}}

\newcommand{\myempty}{\text{\O}}

\def\bra#1{\{\!\{#1\}\!\}}

\definecolor{lime}{HTML}{A6CE39}
\DeclareRobustCommand{\orcidicon}{
	\begin{tikzpicture}
	\draw[lime, fill=lime] (0,0) 
	circle [radius=0.16] 
	node[white] {{\fontfamily{qag}\selectfont \tiny ID}};
	\draw[white, fill=white] (-0.0625,0.095) 
	circle [radius=0.007];
	\end{tikzpicture}
	\hspace{-2mm}
}
\foreach \x in {A, ..., Z}{\expandafter\xdef\csname orcid\x\endcsname{\noexpand\href{%
https://orcid.org/\csname orcidauthor\x\endcsname}{\noexpand\orcidicon}}}

\topmargin=0.0in
\textwidth5.9in
\textheight8in
\oddsidemargin=0.3in
\evensidemargin=0.3in

\title[Sigma-invariants and tropical varieties]%
{Sigma-invariants and tropical varieties}

\author[Alexander~I.~Suciu]{Alexander~I.~Suciu$^1$\orcidA{}}
\address{Department of Mathematics,
Northeastern University,
Boston, MA 02115, USA}
\email{\href{mailto:a.suciu@northeastern.edu}{a.suciu@northeastern.edu}}
\urladdr{\href{http://web.northeastern.edu/suciu/}%
{web.northeastern.edu/suciu/}}
\thanks{$^1$Supported in part by the Simons Foundation Collaboration 
Grants for Mathematicians \#354156 and \#693825}

\subjclass[2000]{Primary 
20J05,  %% Homological methods in group theory
14T05  %% Tropical geometry
Secondary 
32Q15,  %% K\"ahler manifolds
32S22,  %% Relations with arrangements of hyperplanes
55N25,  %% Homology with local coefficients, equivariant cohomology
57M05,  %% Fundamental group, presentations, free differential calculus
57M07,  %% Topological methods in group theory
57M27.  %% Invariants of knots and 3-manifolds
}

\keywords{Bieri--Neumann--Strebel--Renz invariant, tropical variety, 
valuation, characteristic variety, Alexander polynomial, resonance variety, 
exponential tangent cone.}

%\date{October 14, 2020}
%\date{February 25, 2021}
\date{March 25, 2021}

\begin{document}

\begin{abstract}
The Bieri--Neumann--Strebel--Renz invariants $\Sigma^q(X,\Z)\subset H^1(X,\R)$ 
of a connected, finite-type CW-complex $X$ are the vanishing loci for 
Novikov--Sikorav homology  in degrees up to $q$, while the characteristic varieties 
$\VV^q(X) \subset H^1(X,\C^{\times})$ are the nonvanishing loci for homology 
with coefficients in rank $1$ local systems in degree $q$. We show that each 
BNSR invariant $\Sigma^q(X,\Z)$ is contained in the complement of the tropical 
variety associated to the algebraic variety $\VV^{\le q}(X)$, and provide 
applications to several classes of groups and spaces.
\end{abstract}

\maketitle

\section{Introduction}
\label{sect:intro}

\subsection{The Sigma-invariants}
\label{subsec:bnsr-intro}
In \cite{BNS}, Bieri, Neumann, and Strebel introduced powerful new invariants 
of finitely generated groups.  To each such group $G$ they associated 
a subset  $\Sigma^1(G)$ of the unit sphere $S(G)$ in the real vector space 
$\Hom(G,\R)$. This ``geometric'' invariant of the group is cut out of the 
sphere by open cones, and is independent of a finite generating set 
for $G$.  In \cite{BR}, Bieri and Renz recast the BNS invariant in homological  
terms, and defined a nested family of higher-order invariants, 
$\{\Sigma^q(G,\Z)\}_{q\ge 1}$, which record the finiteness properties 
of normal subgroups of $G$ with abelian quotients. 

In \cite{FGS}, Farber, Geoghegan, and Sch\"utz further extended these 
notions, from groups to spaces. The BNSR invariants of a connected, 
finite-type CW-complex $X$ form a nested sequence of subsets,  
$\{\Sigma^q(X,\Z)\}_{q\ge 1}$, inside the unit sphere $S(X)\subset H^1(X,\R)$. 
The sphere $S(X)$ can be thought of as parametrizing 
all free abelian covers of $X$, while the $\Sigma$-invariants 
(which are again open subsets), keep track of the geometric 
finiteness properties of those covers. 
The BNSR invariants $\Sigma^q(X,\Z)$ are the vanishing 
loci for Novikov--Sikorav homology  in degrees up to $q$. 
The significance of these invariants lies in the fact that they 
control the finiteness properties of kernels of projections to 
abelian quotients; in particular, they encode information about 
algebraic fiberings of the group $G=\pi_1(X)$, i.e., homomorphisms 
$G \surj \Z$ with finitely generated kernels.

The actual computation of the BNS invariant is enormously complicated, 
and has been achieved so far only for some special classes of groups, 
such as metabelian groups \cite{BGr}, one-relator groups \cite{Br}, 
right-angled Artin groups \cite{MMV}, K\"{a}hler groups \cite{De10}, 
and the pure braid groups \cite{KMM}.  If $G$ is the fundamental 
group of a compact, connected $3$-manifold $M$, the BNS invariant 
is the projection onto the unit sphere of the fibered faces 
of the unit ball in Thurston's norm, introduced in \cite{Th} 
in order to understand all the ways in which $M$ may fiber over 
the circle.  In all these examples, and also for the groups studied by 
Kielak in \cite{Ki19}, the set $\Sigma^1(G)$ is the intersection 
of $S(G)$ with a finite union of open rational polyhedral cones. 
As shown in \cite{BNS}, though, this is not always the case.

It thus makes sense to look for approximations to the BNSR invariants 
which (1) are more computable, and (2) are finite unions of open rational 
polyhedral cones. In a special case, such an approximation 
was found by McMullen \cite{McM}, who showed that, for a 
compact, connected, orientable $3$-manifold with empty or 
toroidal boundary, the Thurston norm unit ball $B_T$ is contained 
in the Alexander norm unit ball $B_A$, which is defined in terms of the 
Newton polytope of the multi-variable Alexander polynomial $\Delta_G$. 
Using the characteristic varieties (a generalization of the zero-locus of $\Delta_G$), 
Papadima and the author gave in \cite{PS-plms} a much more general upper 
bound---valid for all BNSR invariants $\Sigma^q(X,\Z)$---albeit not 
as sharp as McMullen's in the case of $3$-manifolds. Our goal here 
is to strengthen the bound from \cite{PS-plms} so as to largely recover 
McMullen's result (at least at the level of the fibered faces of $B_T$), and 
also to recast Delzant's theorem from  \cite{De10} in this general framework.  
The tools we use come mostly from the theory of cohomology jump loci, 
and draw in an essential way on ideas and methods from tropical geometry. 

\subsection{Tropicalized characteristic varieties}
\label{subsec:exptcone-intro}

Let $X$ be a space as above, and let $G=\pi_1(X)$ be its fundamental group. 
The group of complex-valued characters, 
$\T_G \coloneqq \Hom(G,\C^{\times})=H^1(X,\C^{\times})$, 
is a complex algebraic group, which may be thought of as the moduli space of 
rank $1$ local systems on $X$.  Taking homology with coefficients in such local 
systems carves out subvarieties $\VV^q(X)\subset \T_G$ where the homology in 
degree $q$ does not vanish. These {\em characteristic varieties}, which control 
the Betti numbers of regular abelian covers of $X$ (see 
e.g.~\cite{Su-pisa12, Su-imrn, SYZ-pisa}), provide the initial input towards 
finding computable bounds for the BNSR invariants. 

For the next step, we enlarge our moduli space to $\K$-valued characters, 
where $\K=\C\bra{t}$ is the field of Puiseux series with complex 
coefficients. Notably, this is an algebraically closed  field, which 
comes endowed with a non-Archimedean valuation, $\K^{\times}\to \Q\subset \R$, 
see e.g.~\cite{MS}.  Let $\nu_X\colon H^1(X,\K^{\times}) \to H^1(X,\R)$  be the 
homomorphism induced by the coefficient map.  Given a subvariety $W$ of the 
algebraic $\K$-group $H^1(X,\K^{\times})$, we define its  {\em tropicalization}, 
$\Trop(W)$, to be the closure of $\nu_X(W\times_{\C} \K)$ inside the 
Euclidean space $H^1(X,\R)$.
  
Our main result (proved in Theorem \ref{thm:bns-trop}), reads as follows.

\begin{theorem}
\label{thm:bns-trop-intro}
Let $\VV^{\le q}(X)$ be the union of the characteristic 
varieties of $X$ in degrees up to $q$. Then 
$\Sigma^q(X,\Z) \subseteq S\big(\!\Trop( \VV^{\le q}(X))\big)^{\compl}$.
\end{theorem}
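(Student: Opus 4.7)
The plan is to prove the contrapositive: if $\xi \in H^1(X,\R)\setminus\{0\}$ lies on a ray meeting $\Trop(\VV^{\le q}(X))$, then $\xi\notin \Sigma^q(X,\Z)$. By the homological reformulation of the BNSR invariants due to Sikorav and Farber--Geoghegan--Sch\"utz (cited in the introduction), $\xi\in \Sigma^q(X,\Z)$ is equivalent to the vanishing $H_i(X;\widehat{\Z G}^{\xi})=0$ for all $i\le q$, where $G=\pi_1(X)$ and $\widehat{\Z G}^{\xi}$ is the Novikov--Sikorav completion of $\Z G$ in the direction $\xi$. The goal is therefore to produce a non-vanishing Novikov homology class in some degree $q'\le q$.

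By hypothesis, some $r\xi$ with $r>0$ lies in $\Trop(\VV^{\le q}(X))$. Since the rational points of $\Trop(\VV^{\le q}(X))$ are dense in each of its components and $\Sigma^q(X,\Z)$ is open, I would reduce to the case that $r\xi$ is an actual value $\nu_X(\rho)$ for an honest $\K$-point $\rho\in \VV^{q'}(X)(\K)$ with $q'\le q$; such a Puiseux lift is guaranteed by the fundamental theorem of tropical geometry applied to the algebraically closed valued field $\K=\C\bra{t}$. By definition of the characteristic varieties, $H_{q'}(X;\K_{\rho})\ne 0$, where $\K_{\rho}$ denotes $\K$ viewed as a right $\Z G$-module via $\rho$.

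The crux is a ring homomorphism $\phi\colon \widehat{\Z G}^{\xi}\to \K$ extending $\rho\colon \Z G\to \K$. Since $\val\circ\rho = r\xi$, and since the Novikov support condition requires only finitely many elements $g$ of the support of any series to satisfy $\xi(g)\le c$ for each fixed $c$, the formula $\sum a_g g\mapsto \sum a_g\rho(g)$ yields a convergent Puiseux series in $\K$ and hence a well-defined ring map. Base-changing the cellular chain complex of the universal cover $\wX$ through $\phi$ then gives
\[
C_*(\wX)\otimes_{\Z G}\K \;\cong\; \bigl(C_*(\wX)\otimes_{\Z G}\widehat{\Z G}^{\xi}\bigr)\otimes_{\widehat{\Z G}^{\xi}}\K.
\]
Because $C_*(\wX)$ is a complex of free $\Z G$-modules, the standard hyper-Tor spectral sequence $E^2_{p,j}=\Tor^{\widehat{\Z G}^{\xi}}_p\!\bigl(H_j(X;\widehat{\Z G}^{\xi}),\K\bigr) \Rightarrow H_{p+j}(X;\K_{\rho})$, combined with the assumed vanishing in degrees $j\le q$, would force $H_{q'}(X;\K_{\rho})=0$, contradicting the previous paragraph.

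The main obstacle I anticipate is the careful construction of $\phi$: one must verify convergence of the image series in $\K$ and keep straight the direction/sign conventions for Novikov rings, ensuring that the vanishing in Novikov homology attached to $\xi$ (and not $-\xi$) is what meets the tropical lift with valuation $r\xi$. A secondary technical point is the density/openness reduction used to pass from Puiseux lifts of rational points of $\Trop(\VV^{\le q}(X))$ to arbitrary points; this step uses openness of $\Sigma^q(X,\Z)$ in an essential way, and is precisely where ideas from tropical geometry (and not merely the ``exponential tangent cone'' of \cite{PS-plms}) are needed.
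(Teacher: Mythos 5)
Your proposal follows the paper's proof almost verbatim: same contrapositive setup via the Novikov--Sikorav homological criterion of Farber--Geoghegan--Sch\"utz, same Puiseux lift of a rational tropical point (eq.\ \eqref{eq:trop-rat}), same ring map from the Novikov completion into a valued field extending $\rho$, same K\"unneth/Tor spectral-sequence base change, and the same density-of-rational-points plus openness-of-$\Sigma^q$ reduction. The paper packages the middle of the argument as Theorem~\ref{thm:val} (extending an older idea of Bieri--Groves and Delzant), but the content is identical.

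There is one genuine gap in the step you yourself flagged as the crux. The series $\phi\big(\sum a_g g\big)=\sum a_g\rho(g)$ does \emph{not} in general yield an element of $\K=\C\bra{t}$: even though the valuations $\val(a_g\rho(g))=r\,\xi(g)$ tend to $+\infty$ (so the series converges in the valuation topology), the exponents appearing across the infinitely many summands $\rho(g)$ need not admit a common denominator, and $\K$ is not complete with respect to its valuation. So $\phi$ as written is not a map into $\K$. The correct target is the topological completion $\widehat{\K}$; one then obtains vanishing of $H_{i}(X,\widehat{\K}_{\iota\circ\rho})$ from the spectral sequence and must invoke the compatibility of characteristic varieties under field extension (eq.\ \eqref{eq:cv-ext}) to deduce $\rho\notin\VV^{\le q}(X,\K)$, contradicting the choice of $\rho$. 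This is precisely how the paper's proof of Theorem~\ref{thm:val} closes the loop, so the fix is routine, but as stated your construction of $\phi$ is not well defined.
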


This upper bound is a finite union of open polyhedral cones 
in $H^1(X,\R)$, intersected with the unit sphere $S(X)$. 
The theorem recovers---in a much stronger form---the 
main result of \cite{PS-plms}, which asserts that 
$\Sigma^q(X,\Z)$ is contained in $S( \tau^{\R}_1(\VV^{\le q}(X)))^{\compl}$, 
where $\tau_1(W)\subset \C^n$ is the {\em exponential tangent cone}\/ at the 
identity to a subvariety $W\subset (\C^{\times})^n$, and $\tau^{\R}_1(W)$ 
are the real points on it.  Indeed, as we show in Proposition \ref{prop:tc trop}, 
we always have an inclusion $\tau_1^{\R}(W)\subseteq \Trop(W)$. Nevertheless, 
this inclusion is oftentimes strict, since, for instance, $\tau_1(W)$ is a union of 
rationally defined linear subspaces, whereas $\Trop(W)$ is a (not necessarily 
symmetric about the origin) polyhedral complex. Furthermore,  tropicalization 
may detect components of $\VV^{q}(X)$ that do not pass through the origin, 
or even do not lie in $\T_G^0$ (the identity component of $\T_G$), 
whereas the exponential tangent cone only depends on the analytic 
germ at $1$ of $\VV^{q}(X)$. 

\subsection{Bounding the BNS invariant}
\label{subsec:BNSbound-intro}

For a finitely generated group $G$, Theorem \ref{thm:bns-trop-intro} yields 
the following tropical bound on the BNS invariant:
\begin{equation}
\label{eq:inc-G-intro}
\Sigma^1(G) \subseteq -S(\Trop( \VV^{1}(G)))^{\compl},
\end{equation}
where $-\Sigma$ denotes the image of a subset $\Sigma\subset S(G)$ 
under the antipodal map. 
The minus sign arises due to the conventions we follow here, according 
to which $\Sigma^1(G)=-\Sigma^1(G,\Z)$, as in \cite{BR}; for a detailed account 
of our choices of signs in several definitions, we refer to Remark \ref{rem:signs}. 
We single out two classes of groups where the bound from \eqref{eq:inc-G-intro} 
can be refined.

The first class (which includes the $3$-manifold groups mentioned 
previously) consists of groups $G$ for which the Alexander polynomial 
$\Delta_G$ is symmetric and satisfies a certain condition which 
guarantees that $\VV^1(G)\cap \T_G^0$ coincides (at least away 
from $1$) with the zero-locus of $\Delta_G$. 
For such groups, we prove in Theorem \ref{thm:sigma1-G} that 
\begin{equation}
\label{eq:sigma-facets-intro}
\Sigma^1(G) \subseteq \bigcup_{\text{$F$ an open facet of $B_A$}} S(F)\, .
\end{equation} 
For closed, orientable, $3$-manifolds with empty or toroidal boundary 
this inclusion also follows from the aforementioned 
work of Thurston, Bieri--Neumann--Strebel, and McMullen.

The second class (which includes K\"ahler groups, arrangement groups, 
and certain Seifert manifold groups), consists of groups $G$ for which 
there are homomorphisms $f_\alpha$ from $G$ onto groups 
$G_{\alpha}$ so that $\VV^1(G_\alpha)$ contains a component 
of $\T_{G_\alpha}$. In Theorem \ref{thm:sigma-pencils}, we prove that 
\begin{equation}
\label{eq:sigma-pen-intro}
\Sigma^1(G) \subseteq \bigg(\bigcup_{\alpha} 
S\big(f^*_{\alpha}(H^1(G_\alpha,\R))\big)\bigg)^{\compl}.
\end{equation}
When $G=\pi_1(M)$ is the fundamental group of a compact K\"ahler 
manifold and the maps $f_{\alpha}$ are induced by suitable orbifold  
fibrations from $M$ to orbifold Riemann surfaces, work of Delzant  \cite{De10} 
shows that this inclusion holds as equality, and so 
$\Sigma^1(M)=S\big(\!\Trop(\VV^1(M))\big)^{\compl}$. 
When $M$ is the complement of an arrangement of hyperplanes in 
$\C^{\ell}$ and $G=\pi_1(M)$, the maps $f_{\alpha}$ 
arise from orbifold fibrations with base a punctured, genus $0$ orbifold. 
The bound \eqref{eq:sigma-pen-intro} takes into account the translated 
tori in $\VV^1(M)$---which the previous bounds did not---but it remains to be 
seen whether the bound is sharp in this setting.

\subsection{Organization}
\label{subsec:org}

The paper is organized in two, roughly equal parts.  Part one  
develops the general theory relating the Sigma-invariants to the 
tropicalization of the characteristic varieties.  
In \S\ref{sect:trop} we review some basic concepts from tropical geometry, 
and establish a connection with the construction of the exponential tangent cone.  
In \S\ref{sect:cvs} we recall needed facts about the characteristic varieties 
and the Alexander polynomial, while in \S\ref{sect:trop-cv} we define and study the 
tropicalization of the characteristic varieties. After a quick review of the 
BNSR invariants in \S\ref{sec:bnsr}, we prove in \S\ref{sect:sig-trop-char} 
our main result. Finally, in \S\ref{sect:bns-trop-bound} we establish formulas 
\eqref{eq:sigma-facets-intro} and \eqref{eq:sigma-pen-intro}. 

The second part outlines applications of this theory to several classes of 
spaces and groups. We start in \S\ref{sect:one-rel} with $1$-relator groups, 
where Brown's algorithm provides a quick way for computing the $\Sigma$-invariants, 
and continue in \S\ref{sect:3mfd} with $3$-manifold groups.  In preparation for 
the final three sections, we discuss in \S\ref{sect:orbifolds} the cohomology 
jump loci and the $\Sigma$-invariants of $2$-dimensional orbifolds.  
In \S\ref{sect:seifert} we identify a large class of Seifert fibered manifolds for 
which the BNS invariant is empty. We conclude in \S\S\ref{sect:kahler}--\ref{sect:arrs} 
with the interplay between the tropical characteristic varieties and the BNS invariants 
of compact K\"ahler manifolds and complements of hyperplane arrangements.

\section{Tropical varieties and exponential tangent cones}
\label{sect:trop}

We start by reviewing the basics of tropical geometry, following 
the treatment in the monograph of Maclagan and Sturmfels \cite{MS},  
as well as \cite{BK, CTY, EKL, OS, Pa09, Ra12}. 

\subsection{Valuations}
\label{subsec:val}
Let $\K\coloneqq \C\bra{t}=\bigcup_{n\ge 1} \C\parr{t^{1/n}}$ be the field 
of Puiseux series with complex coefficients. A nonzero element of $\K$ 
has the form $c(t)=c_1t^{a_1} + c_2t^{a_2}  + \cdots$, 
where $c_i\in \C^{\times}$ and $a_1<a_2<\cdots$ are rational 
numbers that have a common denominator.
  
The field $\K$ is algebraically closed, and admits 
a valuation, $\val\colon \K^{\times}\to \Q$, given by $\val(c(t)) =a_1$. 
This makes $\K$ into a valued field, with value group $\Q$, 
and defines a non-Archimedean absolute value on $\K$, by 
setting $\abs{c}=\exp(-\val(c))$ for $c\ne 0$ and $\abs{0}=0$. 
The valuation ring, $R= \{x\in \K\mid \val(x)\ge 0\}$, 
is equal to $\bigcup_{n\ge 1} \C\llbracket{t^{1/n}}\rrbracket$; 
this is a local ring with maximal ideal 
$\mathfrak{m}=\{x\in \K\mid \val(x)> 0\}$ 
and residue field  $R/\mathfrak{m}=\C$. 

For an element $h\in R$, we denote by 
$\left.h\right|_{t=0}$ its image in the residue field. 
A Laurent polynomial $f\in \K[\x^{\pm}] \coloneqq \K[x_1^{\pm 1},\dots,x_n^{\pm 1}]$ 
has the form  $f=\sum_{\u\in A} a_\u \x^\u$, with $a_\u\in \K^{\times}$, for 
some finite subset $A=\supp(f)\subset \Z^n$.  For $w\in \Q^n$, write 
$f(t^{w_1}x_1,\dots,t^{w_n}x_n)=t^{b} g(x_1,\dots,x_n)$, 
where $g\in R[x_1^{\pm 1},\dots,x_n^{\pm 1}]$ 
and no positive power of $t$ divides $g$.  The initial form of $f$ 
with respect to $w$ is then given by 
$\init_{w}{f}=\left.g\right|_{t=0}$. 
  
\subsection{Tropical varieties}
\label{subsec:trop}

Let $(\K^{\times})^n=\Spec(\K[\x^{\pm}])$ be the algebraic 
torus of dimension $n$ over $\K$.  Taking the $n$-fold product 
of the valuation map $\K^{\times}\to \Q$, we obtain 
the map $\val\colon (\K^{\times})^n\to \Q^n\subset\R^n$.
For an ideal $I\subset \K[\x^{\pm}]$, let $W=V(I)$ be the subvariety of 
$(\K^{\times})^n$ cut out by $I$.  The {\em tropicalization}\/ of 
$W \subset (\K^{\times})^n$ is the closure in $\R^n$ (with the 
Euclidean metric topology) of the image of $W$ under the valuation map, 
\begin{equation}
\label{eq:trop-var}
\Trop(W)\coloneqq \overline{\val(W)}\, .
\end{equation}  
It follows directly from the definition that 
\begin{equation}
\label{eq:trop-union}
\Trop(V\cup W)=\Trop(V) \cup \Trop(W)\, .
\end{equation}

An important result in tropical geometry is that a point $w\in \Q^n$ 
belongs to $\Trop(W)$ if and only if $V(\init_{w}{I})\ne \emptyset$, 
where  $\init_{w}{I}$ is the ideal of $\C[\x^{\pm}]$ spanned by 
$\{ \init_{w}{f} \mid f\in I \}$, see ~\cite{Pa09, BK, MS}. Moreover, 
as shown in \cite[Theorem 7.11]{Ra12} in a more general context, 
\begin{equation}
\label{eq:trop-rat}
\Trop(W)\cap \Q^n = \val(W)\, .
\end{equation}
 
The tropicalization of a hypersurface in $(\K^{\times})^n$ can be 
described as follows.  For a Laurent polynomial 
$f=\sum_{\u\in A} a_\u \bt^\u$, 
one defines the {\em tropical polynomial}\/ $\trop(f)$ by
\begin{equation}
\label{eq:trop poly}
\trop(f)(w)=\min_{\u\in A} \left(w\cdot\u+\val(a_\u)\right) .
\end{equation}
This is a piecewise linear, concave function $\R^n \to \R$. 
As a consequence of Kapranov's theorem \cite[Thm.~2.1.1]{EKL}, 
the set $\Trop(V(f))$ coincides with the corner locus of $\trop(f)$: 
that is, the subset of $\R^n$ on which the minimum is achieved 
for at least two values of $\u$.  In other words, the tropicalization 
of $V(f)$ is the locus in $\R^n$ where  $\trop(f)$ fails to be linear.

As shown in \cite[Prop. 3.1.6]{MS}, the tropical hypersurface 
$\Trop(V(f))$ is the support of a pure, rational polyhedral complex 
of dimension $n-1$ in $\R^n$. This complex may be described 
as the $(n-1)$-skeleton of the
polyhedral complex dual to a regular subdivision of the Newton polytope of
$f$ given by the weights $\val(a_\u)$ on the lattice points in 
$\Newt(f)\coloneqq \conv\{\u \mid a_\u\ne 0\}\subset \R^n$.

Finally, if $W=V(I)$ is an arbitrary subvariety in $(\K^{\times})^n$, then 
$\Trop(W)=\bigcap_{f\in I}\Trop(V(f))$. As shown in \cite[Cor. 3.2.4]{MS}, 
this set is the support of a rational polyhedral complex in $\R^n$, a fact 
which explains once again why all the rationals points on $\Trop(W)$ 
are contained in $\val(W)$, as stated in \eqref{eq:trop-rat}.
For instance, if $W$ is a curve, then $\Trop(W)$ is a 
graph with rational edge directions.

Tropicalization enjoys the following naturality property with respect to morphisms 
of algebraic tori. Let $\alpha\colon (\K^{\times})^m \to (\K^{\times})^n$ 
be a monomial map, so that the induced morphism on coordinate rings, 
$\alpha^*\colon \K[\y^{\pm}]\to \K[\x^{\pm}]$ is given by the $n\times m$ 
integral matrix $A$ associated to the homomorphism
\begin{equation}
\label{eq:alpha-vee}
\begin{tikzcd}[column sep=16pt]
\alpha^{\vee}\colon \Hom_{\alg}(\K^{\times},( \K^{\times})^m)=\Z^m \ar[r]& 
 \Hom_{\alg}(\K^{\times},( \K^{\times})^n)=\Z^n .
 \end{tikzcd}
\end{equation}
Let $\trop(\alpha)\coloneqq\alpha^{\vee}\otimes \id_{\R}\colon \R^m \to \R^n$ 
be the $\R$-linear map given by the transpose matrix $A^{\top}$.  
Then, as shown in \cite[Cor.~3.2.13]{MS}, 
for every subvariety $W\subset (\K^{\times})^m$, 
\begin{equation}
\label{eq:trop-mat}
\Trop\big(\overline{\alpha(W)}\big)=\trop(\alpha)\big(\!\Trop(W)\big)\, .
\end{equation}

\begin{remark}
\label{rem:trop-prod-int}
If $V$ and $W$ are irreducible subvarieties of $\C^n$, then 
$\Trop(V\times W)=\Trop(V) \times \Trop(W)$, see  \cite{CTY}.  
On the other hand, tropicalization 
does not always respect intersections:  if $V$ and $W$ are subvarieties of 
$(\K^{\times})^n$, then $\Trop(V\cap W)\subseteq \Trop(V) \cap \Trop(W)$, 
but the inclusion may be strict. Nevertheless, as shown in \cite{OS}, 
this inclusion holds as equality, provided that the tropicalizations 
intersect in the expected dimension.
\end{remark}

\subsection{Tropicalizing subvarieties in a complex torus}
\label{subsec:trop-C}
The ``constant coefficient case" is that of varieties defined 
over the field $\C$ with trivial valuation.  Let  $W$ be a 
subvariety of $(\C^{\times})^n$---also known as a complex, very 
affine variety---and let $W\times_{\C} \K$
be the subvariety of $(\K^{\times})^n$ obtained by extension of 
the base field. The tropicalization of $W$ is then defined as 
\begin{equation}
\label{eq:trop-const}
\Trop(W)\coloneqq\Trop(W\times_{\C} \K)\, .
\end{equation}
For such varieties, the tropicalization is a rational polyhedral 
fan in $\R^n$. When $W=V(f)$ is a hypersurface in $(\C^{\times})^n$ 
defined by a Laurent polynomial $f\in\C[\x^{\pm }]$, 
there is a very concrete, geometric interpretation of $\Trop(W)$.

Given a polytope $P$, we denote by $\mathcal{F}(P)$ its face fan, 
i.e., the set of cones spanned by the faces of $P$, and by 
$\mathcal{N}(P)$ its (inner) normal fan.  If $0$ is in the interior 
of $P$, then the normal fan to $P$ coincides with  
the face fan of the (polar) dual polytope,
\begin{equation}
\label{eq:polarity}
\mathcal{N}(P) = \mathcal{F}(P^{\Delta})\, ,
\end{equation}
see \cite[Exercise~7.1]{Ziegler}.  As in \S\ref{subsec:trop}, let $\Newt(f)$ 
be the Newton polytope of $f$, that is, the convex hull in $\R^n$ of the 
set $\supp(f)\subset \Z^n$.  Then, as noted in \cite{BK},
\begin{equation}
\label{eq:normal-newt}
\Trop(V(f))=\mathcal{N}(\Newt(f))^{\codim >0},
\end{equation}
the positive-codimension skeleton of the normal fan to that polytope. 
In particular, a top-dimen\-sional cone in $\Trop(V(f))$
corresponds to an edge of $\Newt(f)$, or to a ridge in 
the normal fan.

\subsection{Tropicalized translated subtori}
\label{subsec:trop tori}
Let $\k=\C$ or $\K$, and let $(\k^{\times})^n$ be an algebraic torus over $\k$.  
By definition, an {\em algebraic subtorus}\/ of $(\k^{\times})^n$ is the image, $T=\im(\alpha)$, of 
a monomial inclusion, $\alpha\colon (\k^{\times})^{m}\hookrightarrow (\k^{\times})^{n}$, 
for which the image of the dual map, $\alpha^{\vee}\colon \Z^m\inj \Z^n$, 
is a primitive sublattice of $\Z^n$; see for instance \cite{BK,SYZ-jpaa}.  
It follows from formula \eqref{eq:trop-mat}  (see also \cite{BK}) that the 
tropicalization of such an algebraic subtorus is the $m$-dimensional 
linear subspace 
\begin{equation}
\label{eq:tropt}
\Trop(T)=\im(\trop(\alpha))= \alpha^{\vee} (\Z^m) \otimes \R \subset \R^n  .
\end{equation}

A {\em translated subtorus}\/ in $(\k^{\times})^n$ is a subvariety of the 
form $z\cdot T$, where $T$ is an algebraic subtorus and 
$z\in(\k^{\times})^n$. The tropicalization of such a variety 
is an affine subspace, given by
\begin{equation}
\label{eq:troptw}
\Trop(z\cdot T)=\Trop(T)+\val(z)\, .
\end{equation}
In the constant coefficient case ($\k=\C$), 
this formula reduces to
\begin{equation}
\label{eq:tropzt}
\Trop(z\cdot T)=\Trop(T)\, .
\end{equation}

\begin{remark}
\label{rem:bk}
A partial converse to formula \eqref{eq:tropzt} was established in 
\cite[Lem.~3.1]{BK}:  Let $W\subset (\C^{\times})^n$ be an irreducible 
subvariety, let $T$ be an algebraic subtorus, and suppose   
$\Trop(W)\subset \Trop(T)$; then there exists a point 
$z\in(\C^{\times})^n$ such that  $W\subset z\cdot T$. 
\end{remark}

\subsection{The exponential map}
\label{subsec:exp}
Let $\exp\colon \C^n \to (\C^{\times})^n$ be the $n$-fold product 
of the map $\C\to \C^{\times}$, $z\mapsto e^z$.  
The next lemma is based on work from \cite{Su-imrn, SYZ-jpaa, SYZ-pisa}; 
since we will need the explicit construction from the lemma, 
we give a quick, mostly self-contained proof.

\begin{lemma}
\label{lem:subtori}
If $L$ is a linear subspace of $\Q^n$, then $\exp(L\otimes_{\Q} \C)$ is 
an algebraic subtorus of $(\C^{\times})^n$. 
Conversely, every algebraic subtorus $T\subset (\C^{\times})^n$ 
can be realized as $T=\exp(L\otimes_{\Q}  \C)$, 
for some linear subspace $L\subset \Q^n$.
\end{lemma}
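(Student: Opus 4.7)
The plan is to set up a correspondence between $\Q$-subspaces $L\subset\Q^n$ and primitive sublattices $M\subset\Z^n$ via $L\mapsto L\cap\Z^n$ and $M\mapsto M\otimes_\Z\Q$, and then to identify $\exp(L\otimes_\Q\C)$ with the image of the monomial inclusion whose dual is a $\Z$-basis of $M$.

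For the forward direction, I take $L\subset\Q^n$ with $m\coloneqq\dim_\Q L$ and set $M\coloneqq L\cap\Z^n$. The first step is to verify that $M$ is a primitive sublattice of $\Z^n$ of rank $m$: it has rank at least $m$ (clear denominators in any $\Q$-basis of $L$) and at most $m$ (since $M\subset L$), and whenever $v\in\Z^n$ satisfies $kv\in M$ for some nonzero $k\in\Z$ one gets $v\in L$ (as $L$ is a $\Q$-subspace), hence $v\in M$. Next I pick a $\Z$-basis $v_1,\dots,v_m$ of $M$ and assemble it into the $n\times m$ integer matrix with columns $v_i$, producing an injection $\alpha^{\vee}\colon\Z^m\inj\Z^n$ with primitive image $M$, and therefore a monomial inclusion $\alpha\colon(\C^{\times})^m\inj(\C^{\times})^n$ whose image is an algebraic subtorus by definition. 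The final step is to exploit the naturality square
\begin{equation*}
\begin{tikzcd}[column sep=large]
\C^m \ar[r, "\alpha^{\vee}\otimes_\Z\id_\C"] \ar[d, "\exp"'] & \C^n \ar[d, "\exp"] \\
(\C^{\times})^m \ar[r, "\alpha"] & (\C^{\times})^n
\end{tikzcd}
\end{equation*}
whose top row has image $M\otimes_\Z\C=L\otimes_\Q\C$; since the left-hand exponential is surjective, chasing the square yields $\exp(L\otimes_\Q\C)=\im(\alpha)$, the desired subtorus.

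For the converse, I start from $T=\im(\alpha)$ with $\alpha^{\vee}\colon\Z^m\inj\Z^n$ of primitive image $M$, and set $L\coloneqq M\otimes_\Z\Q\subset\Q^n$. This is an $m$-dimensional $\Q$-subspace satisfying $L\otimes_\Q\C=M\otimes_\Z\C$, so the same square, read in reverse, gives $\exp(L\otimes_\Q\C)=T$.

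The only piece of genuine content is the primitivity of $M=L\cap\Z^n$ in the forward direction: this is precisely what guarantees that the associated monomial map is an embedding rather than an isogeny onto its image, and hence that $\exp(L\otimes_\Q\C)$ satisfies the paper's definition of an algebraic subtorus. Everything else is the familiar dictionary, through the coordinatewise exponential, between $\C$-subspaces of $\C^n$ spanned by integer vectors and algebraic subtori of $(\C^{\times})^n$.
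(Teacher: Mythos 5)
Your proof is correct and follows essentially the same route as the paper: both pass through the primitive sublattice $\Lambda = L\cap\Z^n$ and the monomial inclusion it determines. The only difference is that the paper cites \cite[Lemma 6.1]{SYZ-jpaa} for the identity $T=\exp(L\otimes_\Q\C)$, whereas you derive it directly from the commutativity of the exponential with the linear and monomial maps, making the argument self-contained at that step.
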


\begin{proof}
Let $\Lambda=L\cap \Z^n$; then $\Lambda$ is a primitive sublattice 
of $\Z^n$ and $L=\Lambda\otimes \Q$. The inclusion map, 
$\Lambda \inj \Z^n$, is the Pontryagin dual to 
a monomial inclusion map, $T \inj (\C^{\times})^n$,
where $T$ is a complex algebraic subtorus. 
Moreover, as shown in \cite[Lemma 6.1]{SYZ-jpaa}, 
\begin{equation}
\label{eq:torus}
T=\exp(L\otimes_{\Q}  \C)\, .
\end{equation}

Now suppose $T$ is an algebraic subtorus of $(\C^{\times})^n$. 
If $\alpha\colon T \inj (\C^{\times})^n$ is the corresponding 
monomial inclusion map, and if we let $L=\im(\alpha^{\vee})\otimes \Q$,  
then $T=\exp(L\otimes_{\Q}  \C)$, by \eqref{eq:torus}.
\end{proof}

As an application, we obtain the following corollary.

\begin{corollary}
\label{cor:trop trans}
Let $V$ be a subvariety of $(\C^{\times})^n$, all of whose irreducible 
components are translated subtori.  Write 
$V=\bigcup_{i} \rho_i \cdot  \exp(L_i\otimes_{\Q} \C)$,   
for some (finitely many) linear subspaces $L_i\subset \Q^n$ 
and some $\rho_i\in (\C^{\times})^n$. 
Then $\Trop (V) = \bigcup_i L_i\otimes_{\Q}  \R$. 
\end{corollary}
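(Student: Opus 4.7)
The proof should be a direct assembly of the preceding results, so the plan is essentially bookkeeping rather than genuine computation.

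First I would reduce to the case of a single translated subtorus. Since tropicalization commutes with finite unions by \eqref{eq:trop-union}, we get
\begin{equation*}
\Trop(V) \;=\; \bigcup_i \Trop\!\bigl(\rho_i \cdot \exp(L_i\otimes_{\Q} \C)\bigr),
\end{equation*}
so it suffices to prove the equality $\Trop(\rho\cdot \exp(L\otimes_{\Q}\C)) = L\otimes_{\Q}\R$ componentwise.

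Next, I would strip off the translation factor. Set $T_i \coloneqq \exp(L_i\otimes_{\Q}\C)$, which by the first assertion of Lemma \ref{lem:subtori} is an algebraic subtorus of $(\C^{\times})^n$. Because we are in the constant coefficient case (everything is defined over $\C$), formula \eqref{eq:tropzt} gives $\Trop(\rho_i\cdot T_i) = \Trop(T_i)$, so the translations $\rho_i$ disappear from the picture.

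Finally, I would identify $\Trop(T_i)$ with $L_i\otimes_{\Q}\R$. Following the construction in the proof of Lemma \ref{lem:subtori}, the subtorus $T_i$ arises from the monomial inclusion $\alpha_i\colon T_i \hookrightarrow (\C^{\times})^n$ whose dual $\alpha_i^{\vee}$ has image the primitive sublattice $\Lambda_i = L_i\cap \Z^n$, and $L_i = \Lambda_i\otimes_{\Z}\Q$. Formula \eqref{eq:tropt} then yields
\begin{equation*}
\Trop(T_i) \;=\; \alpha_i^{\vee}(\Z^{m_i})\otimes_{\Z}\R \;=\; \Lambda_i\otimes_{\Z}\R \;=\; L_i\otimes_{\Q}\R,
\end{equation*}
and combining the three displays gives the claim.

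There is no real obstacle here: each ingredient has already been established in the excerpt, and the only thing to be careful about is the chain of identifications $L_i\otimes_{\Q}\R = (\Lambda_i\otimes_{\Z}\Q)\otimes_{\Q}\R = \Lambda_i\otimes_{\Z}\R$, together with the observation that passing from $\C$ to $\K$ in \eqref{eq:trop-const} is harmless for subtori because their tropicalizations are translation-invariant. The mild conceptual point is that one uses Lemma \ref{lem:subtori} in \emph{both} directions implicitly—to know that $T_i$ is a genuine algebraic subtorus, and to recover $L_i$ as the rational span of its cocharacter lattice—so that formula \eqref{eq:tropt} can be applied with the expected $L_i$.
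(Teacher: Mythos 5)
Your proposal is correct and follows essentially the same route as the paper's proof: reduce to a single component via \eqref{eq:trop-union}, discard the translation via \eqref{eq:tropzt}, and then identify $\Trop(T_i)$ with $L_i\otimes_\Q\R$ by combining Lemma \ref{lem:subtori} with \eqref{eq:tropt}. The final remark about recovering $L_i$ from the cocharacter lattice is exactly the point the paper flags with ``where at the last step we also used the proof of Lemma \ref{lem:subtori}.''
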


\begin{proof}
By Lemma \ref{lem:subtori}, it is indeed possible to write each irreducible 
component of $V$ in the stated form. We then have
\begin{gather}
\begin{aligned}
\Trop (V) &=  
\Trop\Big( \bigcup\nolimits_{i} \rho_i \cdot  \exp(L_i\otimes_{\Q} \C)\Big)
&
\\
 &= \bigcup\nolimits_{i}  \Trop\big( \rho_i \cdot  \exp(L_i\otimes_{\Q} \C)\big)
 &&\qquad \text{by \eqref{eq:trop-union}} \\
 &= \bigcup\nolimits_{i}  \Trop\big(\!\exp(L_i\otimes_{\Q} \C)\big)
 &&\qquad \text{by \eqref{eq:tropzt}}\\
 &= \bigcup\nolimits_{i}  L_i\otimes_{\Q} \R 
 &&\qquad \text{by \eqref{eq:tropt},}
\end{aligned}
 \end{gather}
 where at the last step we also used the proof of Lemma \ref{lem:subtori}. 
This completes the proof.
\end{proof}

\subsection{The exponential tangent cone}
\label{subsec:exptcone}

The following notion was introduced by 
Dimca, Papadima, and the author in \cite{DPS-duke}, and 
further studied in \cite{PS-plms, Su-imrn, SYZ-pisa}.  

\begin{definition}
\label{def:exp tcone}
Let $W\subset (\C^{\times})^n$ be an algebraic subvariety. 
The {\em exponential tangent cone}\/ of $W$ at $1$ is 
the homogeneous subvariety $\tau_1(W)\subset \C^n$, 
defined by 
\begin{equation}
\label{eq:taudef}
\tau_1(W)= \big\{ z\in \C^n \mid \exp(\lambda z)\in W,\ 
\text{for all $\lambda\in \C$} \big\} \, .
\end{equation}
\end{definition}

This set depends only on the analytic germ of $W$ at the identity;  
in particular, $\tau_1(W)\ne \emptyset$ if and only if $1\in W$.  
Furthermore, $\tau_1$ commutes with finite unions, as  
well as arbitrary intersections.  The main features 
of this construction are encapsulated in the following result.

\begin{lemma}[\cite{DPS-duke,Su-imrn}]
\label{lem:tc-linear}
For a subvariety $W\subset (\C^{\times})^n$,  
the set $\tau_1(W)$ is a finite union of rationally 
defined linear subspaces of $\C^n$.  Moreover, 
$\tau_1(W)$  is contained in $\TC_1(W)$, the 
tangent cone at $1$ to $W$. 
\end{lemma}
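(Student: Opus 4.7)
The plan is to reduce to the case of a single Laurent polynomial hypersurface, analyze that case by linear independence of complex exponentials, and reassemble via intersections. Concretely, since $W$ is Zariski closed in $(\C^{\times})^n$, its defining ideal in $\C[\x^{\pm 1}]$ admits a finite generating set $f_1,\dots,f_r$, so $W=\bigcap_{i=1}^r V(f_i)$. Because $\tau_1$ commutes with arbitrary intersections (as is immediate from Definition \ref{def:exp tcone}), it will suffice to show that $\tau_1(V(f))$ is a finite union of rationally defined linear subspaces for each individual $f$: intersecting finitely many such unions and distributing leaves a finite union of rationally defined linear subspaces.

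For a single Laurent polynomial $f=\sum_{\u\in A} a_\u \x^\u$ with $A=\supp(f)$ finite and $a_\u\in\C^{\times}$, I would compute
\[
f(\exp(\lambda z))=\sum_{\u\in A} a_\u\, e^{\lambda(\u\cdot z)}.
\]
The vector $z$ lies in $\tau_1(V(f))$ exactly when this entire function of $\lambda$ vanishes identically. Grouping terms with the same value of $\u\cdot z$ and invoking the $\C$-linear independence of the characters $\lambda\mapsto e^{c\lambda}$ for distinct $c\in\C$, this is equivalent to $\sum_{\u\in B} a_\u=0$ for every block $B$ of the partition $\pi_z$ of $A$ determined by $\u\sim_z\u'\Leftrightarrow \u\cdot z=\u'\cdot z$. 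Let $\mathcal{P}(f)$ denote the finite set of partitions $\pi$ of $A$ all of whose block sums of the $a_\u$'s vanish, and for $\pi\in\mathcal{P}(f)$ set
\[
L_\pi=\{z\in\C^n : (\u-\u')\cdot z=0\text{ whenever }\u,\u'\text{ lie in a common block of }\pi\},
\]
a linear subspace defined over $\Q$ (indeed over $\Z$). A short verification, using that a coarsening of a partition in $\mathcal{P}(f)$ is again in $\mathcal{P}(f)$, then gives $\tau_1(V(f))=\bigcup_{\pi\in\mathcal{P}(f)} L_\pi$, a finite union of the required form.

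For the inclusion $\tau_1(W)\subseteq\TC_1(W)$, given $z\in\tau_1(W)$ the analytic curve $\gamma\colon\C\to W$ defined by $\gamma(\lambda)=\exp(\lambda z)$ satisfies $\gamma(0)=1$ and $\gamma'(0)=z$, which places $z$ in the tangent cone to $W$ at the identity. The crux of the argument is the application of linear independence of complex exponentials, which converts the vanishing of the entire function $\lambda\mapsto f(\exp(\lambda z))$ into a finite combinatorial condition on partitions of $\supp(f)$; once this is in hand, the linearity and rationality of the $L_\pi$ are automatic from the integrality of the exponent vectors $\u\in\Z^n$, and the remainder is finite-union/intersection bookkeeping.
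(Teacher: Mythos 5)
Your argument is correct, and it is essentially the same as the proof given in \cite{DPS-duke} (and reproduced in \cite{Su-imrn}): reduce to a hypersurface $V(f)$, observe that $z\in\tau_1(V(f))$ iff the entire function $\lambda\mapsto\sum_{\u\in A}a_\u e^{\lambda(\u\cdot z)}$ vanishes identically, invoke the $\C$-linear independence of $\{e^{c\lambda}\}$ for distinct $c$ to turn this into a vanishing condition on block sums of the partition $\pi_z$ of $A$ induced by the level sets of $\u\mapsto\u\cdot z$, and encode the result as a finite union of rationally defined subspaces $L_\pi$. Your verification of $\tau_1(V(f))=\bigcup_{\pi\in\mathcal{P}(f)}L_\pi$ via the observation that $\pi_z$ coarsens $\pi$ whenever $z\in L_\pi$, and that $\mathcal{P}(f)$ is closed under coarsening, is the right bookkeeping, and the tangent-cone inclusion via the analytic arc $\gamma(\lambda)=\exp(\lambda z)$ with $\gamma(0)=1$, $\gamma'(0)=z$ is also the standard argument. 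No gap.
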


Let $\tau_1^{\R}(W)\coloneqq \tau_1(W)\cap \R^n$ be the set of real 
points on the exponential tangent cone. The next proposition---% 
which may be thought of as the starting point of our investigation---% 
establishes a noteworthy relationship between this set and the 
tropicalization of $W$. 

\begin{proposition}
\label{prop:tc trop}
Let $W\subset (\C^{\times})^n$ be an algebraic subvariety. 
Then $\tau_1^{\R}(W)\subseteq \Trop(W)$.
\end{proposition}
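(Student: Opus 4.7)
The plan is to reduce to rational points on $\tau_1^{\R}(W)$ and then produce an explicit Puiseux-series point of $W\times_{\C}\K$ with the prescribed valuation. By Lemma~\ref{lem:tc-linear}, $\tau_1(W)$ is a finite union of rationally defined linear subspaces of $\C^n$, so $\tau_1^{\R}(W)$ is a finite union of rational linear subspaces of $\R^n$. The rational points are dense in each such subspace, and $\Trop(W)$ is closed in $\R^n$ by definition, so it suffices to show $\Q^n\cap\tau_1(W)\subseteq\Trop(W)$.

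Fix $q=(q_1,\dots,q_n)\in\Q^n\cap\tau_1(W)$, and consider the $\K$-point $\zeta=(t^{q_1},\dots,t^{q_n})\in(\K^{\times})^n$; note $\val(\zeta)=q$. The first step is to check that $\zeta\in W\times_{\C}\K$. Let $W$ be cut out by Laurent polynomials $f_\alpha=\sum_{\u\in A_\alpha}c_{\alpha,\u}\x^\u\in\C[\x^{\pm}]$; then $W\times_{\C}\K$ is cut out by the same polynomials in $\K[\x^{\pm}]$, so I need to verify that
\begin{equation*}
f_\alpha(\zeta)=\sum_{\u\in A_\alpha}c_{\alpha,\u}\,t^{\,q\cdot \u}=0\quad\text{in $\K$.}
\end{equation*}
Grouping monomials by the value of $q\cdot \u$, this vanishing is equivalent to
\begin{equation*}
\sum_{\u\in A_\alpha,\,q\cdot \u=r}c_{\alpha,\u}=0\qquad\text{for each $r\in\Q$.}
\end{equation*}

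To see this last identity, I use the hypothesis $q\in\tau_1(W)$: by \eqref{eq:taudef}, $\exp(\lambda q)\in W$ for all $\lambda\in\C$, so
\begin{equation*}
f_\alpha(\exp(\lambda q))=\sum_{\u\in A_\alpha}c_{\alpha,\u}\,e^{\lambda\,q\cdot \u}=0\qquad\text{for every $\lambda\in\C$.}
\end{equation*}
Collecting terms with the same exponent $r=q\cdot \u$, this is an identity of the form $\sum_r\bigl(\sum_{q\cdot \u=r}c_{\alpha,\u}\bigr)e^{\lambda r}\equiv 0$. The main technical point of the proof is here: the exponentials $\{e^{\lambda r}\}_{r\in\Q}$ are linearly independent over $\C$ as functions of $\lambda$ (for instance by successive differentiation at $\lambda=0$, or by the Vandermonde argument applied to distinct real frequencies), so each inner sum must vanish, giving the required identity.

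Therefore $\zeta\in W\times_{\C}\K$, and $q=\val(\zeta)\in\val(W\times_{\C}\K)\subseteq\Trop(W)$. Combined with the density/closedness reduction above, this proves $\tau_1^{\R}(W)\subseteq\Trop(W)$. The one slightly delicate step is the linear independence of exponentials invoked to conclude vanishing of each graded piece of $f_\alpha(\zeta)$; everything else is a straightforward assembly of Lemma~\ref{lem:tc-linear}, the definition of tropicalization, and the closedness of $\Trop(W)$.
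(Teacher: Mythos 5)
Your proof is correct, and it takes a genuinely different route from the paper's. The paper argues structurally: it writes $\tau_1^{\R}(W)$ as a union of rational subspaces $L\otimes_\Q\R$ (via Lemma~\ref{lem:tc-linear}), observes that the subtorus $T=\exp(L\otimes_\Q\C)$ lies in $W$ by the definition of the exponential tangent cone, and then invokes Corollary~\ref{cor:trop trans} (ultimately the naturality formula \eqref{eq:trop-mat} for tropicalization under monomial maps, cited from \cite{MS}) to conclude $\Trop(T)=L\otimes_\Q\R\subset\Trop(W)$. You instead reduce to rational points $q\in\Q^n\cap\tau_1(W)$ (the same Lemma~\ref{lem:tc-linear} gives density), and then \emph{explicitly} exhibit the Puiseux point $\zeta=(t^{q_1},\dots,t^{q_n})$ with $\val(\zeta)=q$, verifying $\zeta\in W\times_\C\K$ by grouping monomials along the level sets of $\u\mapsto q\cdot\u$ and using linear independence of the exponentials $e^{\lambda r}$ (for finitely many distinct $r\in\Q$) to match each $\K$-coefficient of $f_\alpha(\zeta)$ with the corresponding coefficient in the vanishing identity $f_\alpha(\exp(\lambda q))\equiv 0$. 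The upshot is that your argument is more elementary and self-contained: it only uses the definition \eqref{eq:taudef}, the definition of $\Trop$ as a closure of $\val(W\times_\C\K)$, and a classical linear-independence fact, bypassing the machinery of tropicalized subtori entirely. The paper's approach is shorter given the surrounding infrastructure and fits the structural theme of the section (it reuses Lemma~\ref{lem:subtori} and Corollary~\ref{cor:trop trans}, which are needed elsewhere anyway), whereas yours would stand alone even if that infrastructure were absent.
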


\begin{proof}
As a consequence of Lemma \ref{lem:tc-linear}, the set $\tau^{\R}_1(W)$ is 
a finite union of rationally defined linear subspaces of $\R^n$. 
Let $L\otimes_{\Q} \R$ be one of those subspaces, with $L$ 
as its rational points.

By the definition of $\tau_1(W)$, the set $T=\exp(L\otimes_{\Q} \C)$  
lies inside $W$; thus, $\Trop(T)\subset \Trop(W)$. 
On the other hand, by Corollary \ref{cor:trop trans}, $\Trop(T)=L\otimes_{\Q} \R$.
This shows that $L\otimes_{\Q} \R \subset \Trop(W)$, thereby proving the claim.
\end{proof}

If $W$ is an algebraic subtorus of $(\C^{\times})^n$, the above inclusion is 
of course an equality.  In general, though, the inclusion is strict.  
For instance, if $W=\rho T$, with $T$ a subtorus and $\rho \notin T$, 
then $\tau_1^{\R}(W)=\myempty$, whereas $\Trop(W)=\Trop(T)$ is 
nonempty. More generally, we have the following result.

\begin{proposition}
\label{prop:t1-trop}
Let $W$ be an algebraic subvariety of $(\C^{\times})^n$. 
Suppose there is a subtorus $T\subset (\C^{\times})^n$ such that 
$T\not\subset W$, yet $\rho T\subset W$ for some $\rho\in (\C^{\times})^n$. 
Then $\tau^{\R}_1(W)\subsetneqq \Trop(W)$.
\end{proposition}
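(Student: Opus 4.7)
The plan is to exhibit a point in $\Trop(W)$ that fails to lie in $\tau_1^{\R}(W)$. By Lemma \ref{lem:subtori}, write $T=\exp(L\otimes_{\Q}\C)$ for some rational linear subspace $L\subset \Q^n$. The hypothesis $\rho T\subset W$, together with formula \eqref{eq:tropzt}, yields
\[
L\otimes_{\Q}\R \;=\; \Trop(T) \;=\; \Trop(\rho T) \;\subseteq\; \Trop(W).
\]
Combined with Proposition \ref{prop:tc trop}, which already supplies $\tau_1^{\R}(W)\subseteq \Trop(W)$, it then suffices to produce a point of $L\otimes_{\Q}\R$ that is not contained in $\tau_1^{\R}(W)$.

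To do this I would argue by contradiction: assume $L\otimes_{\Q}\R \subseteq \tau_1^{\R}(W)$. By Lemma \ref{lem:tc-linear}, there is a finite decomposition $\tau_1(W)=\bigcup_{i=1}^{k} L_i\otimes_{\Q}\C$ with each $L_i\subset \Q^n$ a rational subspace, and intersecting with $\R^n$ gives $\tau_1^{\R}(W)=\bigcup_{i=1}^k L_i\otimes_{\Q}\R$. A vector space over an infinite field cannot be covered by finitely many proper subspaces, so the containment $L\otimes_{\Q}\R\subseteq \bigcup_i L_i\otimes_{\Q}\R$ forces $L\subseteq L_i$ for some $i$, and hence $L\otimes_{\Q}\C\subseteq \tau_1(W)$. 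By Definition \ref{def:exp tcone} this means $\exp(z)\in W$ for every $z\in L\otimes_{\Q}\C$, i.e.\ $T=\exp(L\otimes_{\Q}\C)\subseteq W$, contradicting the hypothesis $T\not\subset W$.

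The only subtle step is the passage from ``$L\otimes_{\Q}\R$ is covered by the real loci $L_i\otimes_{\Q}\R$'' to ``$L$ is contained in some $L_i$''; this is the one place where the argument genuinely uses the fact, recorded in Lemma \ref{lem:tc-linear}, that $\tau_1(W)$ is a \emph{finite} union of rationally defined linear subspaces rather than an arbitrary algebraic subvariety. Everything else is a direct unpacking of the definitions of $\tau_1$ and $\Trop$, together with the tropicalization formulas for translated subtori established in \S\ref{subsec:trop tori}.
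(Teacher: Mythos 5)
Your proof is correct and follows essentially the same route as the paper's: write $T=\exp(L\otimes_{\Q}\C)$ via Lemma \ref{lem:subtori}, place $L\otimes_{\Q}\R=\Trop(\rho T)$ inside $\Trop(W)$, and then show this subspace cannot lie in $\tau_1^{\R}(W)$. Where your version differs is that it is more careful at the crux. The paper passes from ``$L\otimes_{\Q}\C\not\subset\tau_1(W)$'' to ``$L\otimes_{\Q}\R\not\subset\tau_1^{\R}(W)$'' with a bare ``thus,'' but this implication is not an immediate consequence of Definition \ref{def:exp tcone}: knowing that $\exp(\lambda x)\in W$ for every real $x\in L\otimes_{\Q}\R$ and every $\lambda\in\C$ only gives membership of $\exp(z)$ in $W$ for $z$ ranging over the union of complex lines $\C\cdot x$ with $x$ real, which is a proper subset of $L\otimes_{\Q}\C$ once $\dim L\ge 2$. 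Your covering argument---using Lemma \ref{lem:tc-linear} to write $\tau_1(W)=\bigcup_i L_i\otimes_{\Q}\C$ with each $L_i$ rational, and then the fact that a vector space over an infinite field is not a finite union of proper subspaces to deduce $L\subseteq L_i$ for some $i$---is precisely the justification this step requires, and you are right to flag it as the one place the finiteness and rationality in Lemma \ref{lem:tc-linear} are genuinely used.
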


\begin{proof}
By Lemma \ref{lem:subtori}, there is a linear subspace $L\subset \Q^n$ 
such that $T=\exp(L\otimes_\Q \C)$. By Corollary \ref{cor:trop trans}, 
$\Trop (\rho T)=L\otimes_\Q \R$.  Since, by assumption, 
$\rho T\subset W$, we must have
\begin{equation}
\label{eq:lqr-y}
L\otimes_\Q \R\subset \Trop(W)\, .
\end{equation}

On the other hand, the assumption that $T\not\subset W$ 
together with definition \eqref{eq:taudef} imply that 
$L\otimes_\Q \C\not\subset \tau_1(W)$; thus,
\begin{equation}
\label{eq:lqr-n}
L\otimes_\Q \R\not\subset \tau^{\R}_1(W)\, .
\end{equation}
Putting together \eqref{eq:lqr-y} and  \eqref{eq:lqr-n}, we conclude that 
the set $\Trop(W) \setminus \tau^{\R}_1(W)$ is nonempty.
\end{proof}

\section{Characteristic varieties and Alexander polynomials}
\label{sect:cvs}

We give in this section a brief review of the theory of 
characteristic varieties and Alexander polynomials.  
For details and further references we refer to 
\cite{DPS-duke, PS-plms, PS-mrl, Su-imrn, Su-tc3d}  for the 
former, and \cite{DPS-imrn, EN, Hi97, McM, Tu} for the latter.

\subsection{Jump loci for twisted homology}
\label{subsec:jumps} 
Let $X$ be a connected CW-complex with finite $q$-skeleton, 
for some $q\ge 1$.  Without loss of generality, we may assume $X$ 
has a single $0$-cell, call it $x_0$.  Let $G=\pi_1(X,x_0)$ 
be the fundamental group of $X$ based at $x_0$, and let 
\begin{equation}
\label{eq:char-gp}
\Hom(G,\C^{\times})=H^1(X,\C^{\times})
\end{equation}
be the group of complex-valued, multiplicative characters of $G$, 
which we shall denote at times as $\T_X$ or $\T_G$.  
Since $\C^{\times}$ is abelian, the abelianization map, 
$\ab\colon G\surj G_{\ab}$, induces an isomorphism, 
$\ab^*\colon \T_{G_{\ab}}\isom \T_G$.  The character 
group of $G$ is a complex algebraic group, with coordinate ring  
$\C[G_{\ab}]$, and with identity $1$ corresponding to the trivial representation. 
The identity component, $\T_G^{0}$, is an algebraic torus of 
dimension $n=b_1(X)$;  the connected components of $\T_G$ 
are translates of this torus by characters indexed by the torsion 
subgroup of $G_{\ab}=H_1(X,\Z)$. 

For each character $\rho\colon G\to \C^{\times}$, we let 
$\C_{\rho}$ be the corresponding rank $1$ local system on $X$, 
i.e., the vector space $\C$, viewed as a module over 
$\C[G]$ via the action $g\cdot a=\rho(g)a$. The {\em characteristic 
varieties}\/ of $X$ (in degree $i\le q$) are the jump loci 
for homology with such twisted coefficients, 
\begin{equation}
\label{eq:cvx}
\VV^i(X)=\big\{\rho \in H^1(X,\C^{\times}) \mid  
H_i(X, \C_{\rho}) \ne 0\big\}\, .
\end{equation}

Here is an alternative description of these  loci, 
which makes it clear that they are Zariski closed 
subsets of the character group, at least for $i<q$.  
Let $X^{\ab}\to X$ be the maximal abelian cover, corresponding  
to the projection $\ab\colon G\surj G_{\ab}$.  Upon lifting the cell 
structure of $X$ to this cover, we obtain a chain complex of free 
$\Z[G_{\ab}]$-modules, $(C_{*}(X^{\ab},\Z), \partial^{\ab})$.
By definition, a character $\rho\in \T_{X}=\T_{G_{\ab}}$ belongs to $\VV^i(X)$ 
precisely when $\rank \partial^{\ab}_{i+1}(\rho) + 
\rank \partial^{\ab}_{i}(\rho) < c_i$, 
where $c_i$ is the number of $i$-cells of $X$ 
and the evaluation of $\partial^{\ab}_i$ at $\rho$ is obtained by applying 
the ring morphism $\C[G_{\ab}]\to \C$, $g\mapsto \rho(g)$ to each entry.  
Hence, $\VV^i(X)$ is the zero-set of the ideal of 
minors of size $c_i$ of the block-matrix 
$\partial^{\ab}_{i+1} \oplus \partial^{\ab}_{i}$.  
The case $i=q$ is more delicate, requiring the replacement 
of $X$ with a CW-complex having finite $(q+1)$-skeleton, 
yet the same jump loci; see \cite[Prop.~4.1]{PS-mrl}.  

We can also define the characteristic varieties  of 
$X$ with coefficients in an arbitrary field $\k$ 
as the subvarieties $\VV^i(X,\k)$  of the algebraic 
group  $H^1(X,\k^{\times})$ 
consisting of those characters $\rho\colon \pi_1(X)\to \k^{\times}$ 
for which $H^i(X,\k_{\rho})\ne 0$. The argument outlined above 
shows that the jump loci $\VV^i(X,\k)$ are determinantal varieties 
of matrices defined over $\Z$. Consequently, these constructions 
are compatible with restriction and extension of the base field; 
that is, if $\k\subset \LL$ is a field extension, then 
\begin{align}
\label{eq:cv-ext}
\VV^i(X,\k)&=\VV^i(X,\LL) \cap H^1(X,\k^{\times})\, ,
\\[2pt]
\label{eq:cv-base}
\VV^i(X,\LL) &= \VV^i(X,\k) \times_{\k} \LL\, .
\end{align}

\subsection{Properties of the characteristic varieties}
\label{subsec:prop-cv}
The characteristic varieties of $X$ are homotopy-type invariants.  Indeed, 
if $f\colon X\to Y$ is a homotopy equivalence, then the induced 
morphism on character groups, $f^*\colon H^1(Y,\C^{\times})\to 
H^1(X,\C^{\times})$, restricts to isomorphisms 
$ \VV^i(Y)\isom \VV^i(X)$.

Clearly, $1\in \VV^i(X)$ if and only if $b_i(X)\ne0 $; moreover, 
$\VV^0(X)= \{1 \}$.  The  set $\VV^1(X)$ depends only on the 
fundamental group $G=\pi_1(X)$, 
and, in fact, only on its maximal metabelian quotient, $G/G''$; thus, 
we shall sometimes write this set as $\VV^1(G)$.  We also 
have the following (partial) functoriality property.  

\begin{proposition}[\cite{Su-imrn}]  
\label{prop:v1-nat}
Let  $G$ be a finitely generated group, 
and let $\varphi\colon G\surj Q$ be a surjective homomorphism.   
Then the induced morphism between character groups, 
$\varphi^*\colon \T_Q\to \T_G$, 
$\varphi^* (\rho)(g)=\varphi(\rho(g))$, is injective, and 
 restricts to an embedding $\VV^1(Q) \inj \VV^1(G)$.
\end{proposition}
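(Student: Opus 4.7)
My plan is to split the claim into two assertions---injectivity of $\varphi^*$ and the set-theoretic inclusion $\varphi^*(\VV^1(Q))\subseteq \VV^1(G)$---and then assemble them. Since $\varphi^*$ is a morphism of algebraic groups, an injective set map it induces, together with the fact that $\VV^1(Q)$ is closed in $\T_Q$, will give an embedding of varieties.

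The injectivity of $\varphi^*\colon \T_Q \to \T_G$ is straightforward. View $\varphi^*$ as the Pontryagin dual of the surjection $\varphi_{\ab}\colon G_{\ab} \surj Q_{\ab}$. A character $\rho \in \T_Q$ lies in $\ker(\varphi^*)$ precisely when $\rho \circ \varphi \equiv 1$ on $G$, which by surjectivity of $\varphi$ forces $\rho \equiv 1$ on all of $Q$.

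For the inclusion of jump loci, I would invoke the homological Lyndon--Hochschild--Serre spectral sequence associated to the extension $1 \to K \to G \to Q \to 1$, where $K = \ker(\varphi)$. Given $\rho \in \T_Q$, the pulled-back local system $\C_{\varphi^*\rho}$ on $G$ has trivial restriction to $K$ (since $\varphi|_K$ is trivial), so the $K$-coinvariants $(\C_{\varphi^*\rho})_K$ can be identified, as a $Q$-module, with $\C_\rho$. The homological five-term exact sequence then reads
$$H_2(G,\C_{\varphi^*\rho}) \to H_2(Q,\C_\rho) \to H_0(Q, H_1(K, \C_{\varphi^*\rho})) \to H_1(G, \C_{\varphi^*\rho}) \to H_1(Q, \C_\rho) \to 0,$$
yielding in particular a surjection $H_1(G,\C_{\varphi^*\rho}) \surj H_1(Q,\C_\rho)$. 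Hence $\rho \in \VV^1(Q)$ (i.e., $H_1(Q,\C_\rho) \ne 0$) forces $H_1(G,\C_{\varphi^*\rho})\ne 0$, which means $\varphi^*\rho \in \VV^1(G)$.

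The step requiring the most care is the identification of the $Q$-module structure on the coinvariants $(\C_{\varphi^*\rho})_K$ with $\C_\rho$: although $K$ acts trivially, the residual $Q = G/K$ action is induced from the original $G$-action on $\C_{\varphi^*\rho}$ descending to the quotient, and one must verify that this action faithfully recovers the character $\rho$. Once this identification is in place, the five-term surjection together with the injectivity statement combine to deliver the required embedding $\VV^1(Q)\inj \VV^1(G)$.
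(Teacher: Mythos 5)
Your proof is correct. The paper does not include its own proof of this statement---it cites it from \cite{Su-imrn}---so there is no in-text argument to compare against; I will assess the proof on its own merits.

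The injectivity and closed-immersion assertion is handled cleanly: since $\varphi_{\ab}\colon G_{\ab}\surj Q_{\ab}$ is surjective, the induced map $\C[G_{\ab}]\surj \C[Q_{\ab}]$ on coordinate rings is surjective, so $\varphi^*\colon \T_Q \to \T_G$ is a closed immersion of diagonalizable groups; this is slightly cleaner than the pointwise argument you give, but both are fine. The main content is the inclusion $\varphi^*(\VV^1(Q))\subseteq \VV^1(G)$, and your use of the five-term exact sequence of the homological Lyndon--Hochschild--Serre spectral sequence for $1\to K\to G\to Q\to 1$ with coefficients in $\C_{\varphi^*\rho}$ is exactly the standard route. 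The identification $(\C_{\varphi^*\rho})_K\cong\C_\rho$ as a $\C[Q]$-module is indeed the pivot, and the verification you outline is correct: $K$ acts trivially because $\varphi^*\rho|_K=\rho\circ\varphi|_K\equiv 1$, so the coinvariants are all of $\C$, and for $q=\varphi(g)$ the residual action is $q\cdot v=(\varphi^*\rho)(g)v=\rho(q)v$. The surjection $H_1(G,\C_{\varphi^*\rho})\surj H_1(Q,\C_\rho)$ then does the rest. One small implicit ingredient worth flagging: you pass freely between $\VV^1(G)$ defined via space homology $H_1(X,\C_\rho)$ and group homology $H_1(G,\C_\rho)$; this is justified by the Cartan--Leray spectral sequence for $\widetilde X\to X$ (which gives $H_i(X,M)\cong H_i(G,M)$ for $i\le 1$), and is also asserted in the paper's \S\ref{subsec:prop-cv}, so it is fine to use, but making it explicit would tighten the argument.
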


Each homology group $H_i(X^{\ab},\C)$ naturally acquires the 
structure of a $\C[G_{\ab}]$-module. The {\em Alexander varieties}\/ 
of $X$ are the zero-sets of the annihilators of these modules,
\begin{equation}
\label{eq:alex-var}
\wV^i(X)=V(\ann(H_i(X^{\ab},\C)))\, .
\end{equation}
As such, these algebraic sets are subvarieties of the character group 
$\T_G=\Spec(\C[G_{\ab}])$. 
We will  mainly be interested here in the unions of the characteristic varieties 
up to a fixed degree, $\VV^{\le q}(X)=\bigcup_{i\le q} \VV^i(X)$. 
If $X$ has finite $q$-skeleton, then 
\begin{equation}
\label{eq:cv-alex}
\VV^{\le q}(X) = \wV^{\le q}(X)\, ,
\end{equation}
where $\wV^{\le q}(X)=\bigcup_{i\le q} \wV^i(X)$, 
see \cite{PS-plms,PS-mrl}.  Moreover, if $b_1(X)>0$, then 
$\VV^{\le 1}(X)=\VV^{1}(X)$.

\subsection{The Alexander polynomial}
\label{subsec:alex-poly}
 
Let $H=G_{\ab}/\tor(G_{\ab})$ be the maximal torsion-free 
abelian quotient of the group $G=\pi_1(X,x_0)$.  It is readily 
seen that the group ring $\Z{H}$ is a commutative Noetherian 
ring and a unique factorization domain.  
Let $q\colon X^{H}\to X$ be the regular cover corresponding to the 
projection $G\surj H$, i.e., the maximal torsion-free abelian cover of $X$.  
The {\em Alexander module}\/ of $X$ is 
defined as the relative homology group 
\begin{equation}
\label{eq:alex-mod}
A_X= H_1(X^H,q^{-1}(x_0), \Z)\, ,
\end{equation}
viewed as a $\Z{H}$-module. This module depends only on the group $G$: 
if $I_G=\ker\, (\varepsilon \colon \Z{G}\to \Z)$ is the augmentation ideal, 
then $A_X\cong A_G$, where $A_G\coloneqq \Z{H}\otimes_{\Z{G}} I_G$. 
To see why, fix a basepoint $\tilde{x}_0\in  q^{-1}(x_0)$;  sending each element  
$g-1\in I_G$ to the path in $X^H$ from $\tilde{x}_0$ to $g\tilde{x}_0$ obtained by 
lifting the loop $g$ at $\tilde{x}_0$ induces an isomorphism 
$A_G \isom A_X$. 

Now let $E_1(A_X)\subseteq \Z{H}$ be the 
ideal of codimension $1$ minors in a $\Z{H}$-presentation for $A_X$.  
The {\em Alexander polynomial}\/ of $X$ is then defined as the greatest 
common divisor of the elements in this determinantal ideal,
\begin{equation}
\label{eq:alex-poly}
\Delta_X=\gcd ( E_1(A_X))\, .
\end{equation}

When $G$ admits a finite presentation, say, 
$G=\langle  x_1,\dots ,x_{m}\mid r_1,\dots ,r_{s}\rangle$, 
we can be more explicit. Let $J_G$ be the $s\times m$ matrix 
whose entries are the Fox derivatives $\partial_j r_i$ of the relators, viewed as 
elements in $\Z{G}$.  Then $\VV^1(G)$ coincides (away from $1$) with 
the subvariety of $\T_G$ defined by the minors of size $m-1$ of $J_G^{\ab}$, 
the matrix obtained from $J_G$ by applying the morphism 
$\ab\colon \Z{G}\to \Z{G_{\ab}}$ to its entries.  Likewise, $\Delta_X$ is 
the greatest common divisor of the minors of size $m-1$ of $J_G^{\alpha}$, 
where $\alpha\colon \Z{G}\to \Z{H}$ is defined by the projection map $G\surj H$. 

Set $n=b_1(G)$ and assume that $n>0$. 
Upon fixing a basis for $H\cong \Z^n$, we may identify 
$\T_G^0=\T_H$ with $(\C^{\times})^n$ and  $\Z{H}$ with the ring 
of Laurent polynomials in $t_1^{\pm1},\dots , t_n^{\pm 1}$. 
The Alexander polynomial $\Delta_{G}\in \Z{H}$ is  
well-defined up to multiplication by units in this ring, 
which are all of the form $\pm g$, for some $g\in H$ 
(we write $\Delta \doteq \Delta'$ if $\Delta = \pm g\cdot \Delta'$ 
in $\Z{H}$).

Let $\Newt(\Delta_G)\subset H_1(G,\R)=\R^n$ be the Newton polytope 
of $\Delta_{G}$. Every cohomology class $\phi \in H^1(G;\Z)\cong \Hom(H,\Z)$  
defines a linear functional, $\phi\colon \R^n \to \R$; we let $\phi(\Newt(\Delta_G))$ 
be the image of the Newton polytope under this map. In \cite{McM}, 
McMullen defined the {\em Alexander norm}\/ of a class $\phi \in H^1(G;\Z)$, 
denoted $\|\phi\|_A$, as the length of the interval $\phi(\Newt(\Delta_G))\subset \R$. 
Clearly, the function $\phi\mapsto \|\phi\|_A$ is convex and linear on rays, making it 
a semi-norm on $H^1(G;\Z)$, which extends to a semi-norm on $H^1(G,\R)$.  
We let $B_A\subset H^1(G,\R)$ be the unit ball in this semi-norm. 

If the Alexander polynomial of $G$ is symmetric, i.e., invariant up to units in 
$\Z{H}$ under the involution $g\mapsto g^{-1}$,  then the Alexander norm ball, 
$B_A$, is, up to a scale factor of $1/2$, the polar dual of the Newton polytope 
of $\Delta_G$ (see \cite{Lg} for a proof):
\begin{equation}
\label{eq:alex-ball}
B_A=\tfrac{1}{2} \Newt(\Delta_G)^{\Delta}.
\end{equation}

We conclude with a result that relates $V(\Delta_G)$, the algebraic 
hypersurface in $\T_G^0$ defined by the vanishing of $\Delta_G$, 
to the first characteristic variety of $G$. This is done under a certain 
hypothesis relating the ideal of $\Z{H}$ generated by $\Delta_G$, 
the augmentation ideal $I_H\subset \Z{H}$, and the Alexander module, 
$A_G=\Z{H}\otimes_{\Z{G}} I_G$.

\begin{proposition}[\cite{DPS-imrn}]
\label{prop:zz1-pi}
Suppose that $I^p_H\cdot ( \Delta_{G} ) =E_1(A_G)$, for some $p\ge 0$. 
Then $\VV^1(G)\cap \T_G^0 = V(\Delta_{G}) \cup \set{1}$. 
\end{proposition}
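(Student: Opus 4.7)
The plan is to identify the intersection $\VV^1(G)\cap \T_G^0$ with the zero-locus $V(E_1(A_G))$ (up to the trivial character), and then to read off the decomposition $V(E_1(A_G))=\{1\}\cup V(\Delta_G)$ directly from the hypothesis. I implicitly assume $b_1(G)\ge 1$, since otherwise $\T_G^0$ reduces to $\{1\}$ and there is nothing to prove.

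The first step is to link $\VV^1(G)$ with the first Fitting ideal $E_1(A_G)$. Starting from a finite presentation $G=\langle x_1,\dots,x_m\mid r_1,\dots,r_s\rangle$, the Fox matrix $J_G$, pushed forward along $\alpha\colon \Z{G}\to \Z{H}$, furnishes a free presentation $\Z{H}^s\to \Z{H}^m\to A_G\to 0$ of the Alexander module. By the description of $\VV^1(G)$ recalled in \S\ref{subsec:jumps}, away from $1$ the set $\VV^1(G)$ is cut out in $\T_G$ by the $(m-1)\times(m-1)$ minors of $J_G^{\ab}$; restricting these minors to the identity component $\T_G^0=\T_H$ amounts to passing to their images in $\Z{H}$, i.e., to generators of $E_1(A_G)$. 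Hence
\begin{equation*}
\bigl(\VV^1(G)\cap\T_G^0\bigr)\setminus\{1\}\;=\;V(E_1(A_G))\setminus\{1\}.
\end{equation*}
Since $b_1(G)\ge 1$ forces $1\in\VV^1(G)$, adjoining the trivial character to both sides gives $\VV^1(G)\cap\T_G^0=V(E_1(A_G))\cup\{1\}$.

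The second step exploits the hypothesis. Taking zero-loci in the identity $I_H^p\cdot(\Delta_G)=E_1(A_G)$, and noting that the only common zero of the elements of the augmentation ideal $I_H$ inside $\T_H$ is the trivial character, I obtain
\begin{equation*}
V(E_1(A_G))\;=\;V(I_H^p)\cup V(\Delta_G)\;=\;\{1\}\cup V(\Delta_G)
\end{equation*}
when $p\ge 1$; the case $p=0$ gives $V(E_1(A_G))=V(\Delta_G)$, with the point $1$ then reintroduced via the previous paragraph. Combining the two steps yields $\VV^1(G)\cap\T_G^0=V(\Delta_G)\cup\{1\}$, as asserted.

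The main point to nail down carefully is the first equality above, especially at the trivial character: one has to check that the $(m-1)\times(m-1)$ minors of $J_G^{\ab}$ indeed coincide with generators of $E_1(A_G)$ after extending scalars along $\Z[G_{\ab}]\to \Z{H}$, and to track when $1$ already lies in $V(E_1(A_G))$ automatically (namely whenever $b_1(G)\ge 2$, because the integer rank of $J_G^{\ab}(1)$ equals $m-b_1(G)$) versus when it must be appended by hand (in the edge case $b_1(G)=1$). This bookkeeping around the trivial character is the one delicate ingredient; once it is in place, the argument is a clean composition of Fox calculus with elementary commutative algebra.
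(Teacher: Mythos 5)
The paper imports this proposition by citation from~\cite{DPS-imrn} and does not spell out a proof, so there is no in-paper argument for me to compare against. Your reconstruction is, however, a correct and complete argument. The two steps you isolate are exactly what the statement needs: the Fox-calculus identification $\bigl(\VV^1(G)\cap\T_G^0\bigr)\setminus\{1\}=V(E_1(A_G))\setminus\{1\}$, obtained by noting that a character in $\T_G^0=\T_H$ factors through $H$, so that evaluating the $(m-1)$-minors of $J_G^{\mathrm{ab}}$ at it is the same as evaluating the generators of $E_1(A_G)$; and the passage from ideals to zero loci, $V(I_H^p\cdot(\Delta_G))=V(I_H^p)\cup V(\Delta_G)$, with $V(I_H)=\{1\}$ in $\T_H$. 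Adjoining $1$ on both sides, justified by $b_1(G)\ge 1 \Rightarrow 1\in\VV^1(G)$, closes the argument in both the $p\ge 1$ and $p=0$ cases. One minor point worth flagging, which you partially acknowledge by choosing a finite presentation at the outset: the Fox-calculus description of $\VV^1$ you invoke presumes $G$ is finitely presented (or at least that one is computing with a finite $2$-complex model for $\VV^1(G)$, which depends only on $G/G''$); since $A_G$ and $E_1(A_G)$ are only defined via a $\Z{H}$-presentation, this is an implicit standing hypothesis of the proposition and not a gap in your argument, but it is good practice to say so explicitly.
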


Groups with first Betti number equal to $1$, finitely 
presented groups  with more generators than relators (such as the $1$-relator 
groups from \S\ref{sect:one-rel}), and the $3$-manifold groups 
from \S\ref{sect:3mfd} all satisfy the hypothesis of Proposition \ref{prop:zz1-pi}. 

Now suppose $G$ satisfies this hypothesis, and also 
$G_{\ab}$ is torsion-free (so that $G_{\ab}=H$).  Then 
$\VV^1 (G)$ itself coincides with $V(\Delta_G)$, 
at least away from $1$: for instance, 
if $\Delta_G \doteq 1$ then  $1\in \VV^1 (G)$, though 
$V(\Delta_G)=\myempty$. 
On the other hand, if $\Delta_G(1)=0$, then $\VV^1 (G)=V(\Delta_G)$. 

\section{Tropicalizing the characteristic varieties}
\label{sect:trop-cv}

Throughout this section, $X$ will be a space having the homotopy 
type of a connected CW-complex with finite $q$-skeleton, for some 
$q\ge 1$. Set $n=b_1(X)$; to avoid trivialities, we will assume 
that $n> 0$. 

\subsection{Tropicalizing affine varieties}
\label{subsec:trop-affine}
As in \S\ref{sect:trop}, we let  $\K=\C\bra{t}$ be the field of Puiseux series with 
complex coefficients. Recall this is an algebraically closed field which supports 
a non-Archimedean valuation, $\val\colon \K^{\times}\to \Q$.  
Letting $\val^{\times n}\colon (\K^{\times})^n\to \Q^n\subset\R^n$ 
be the $n$-fold product of this map, we define a ``valuation map" 
on the whole $\K$-character variety of $\pi_1(X)$, 
\begin{equation}
\label{eq:valmap}
\begin{tikzcd}[column sep=16pt]
\nu_X\colon H^1(X,\K^{\times})\ar[r]& \Q^n \subset \R^n ,
\end{tikzcd}
\end{equation}
by requiring that $\nu_X$ restricts to $\val^{\times n}$ on each connected 
component of $H^1(X,\K^{\times})=\coprod (\K^{\times})^n$.   In other words, 
if $\rho\colon \pi_1(X)\to \K^{\times}$ is a $\K$-valued multiplicative character, 
then the additive character $\val \circ \rho\colon \pi_1(X) \to \Q$ defines the 
element $\nu_X(\rho) \in H^1(X,\Q)=\Q^n\subset \R^n$.  

Alternatively, we may view the map $\nu_X$ as the composite%
\begin{equation}
\label{eq:valmap-bis}
\begin{tikzcd}[column sep=24pt]
H^1(X,\K^{\times}) \ar[r, "\val_*"]
& H^1(X,\Q) \ar[r] 
& H^1(X,\R)\, ,
\end{tikzcd}
\end{equation}
where the first arrow is the coefficient homomorphism 
induced by the map $\val\colon \K^{\times}\to \Q$, and the second arrow 
is the coefficient homomorphism $H^1(X,\Q)\inj H^1(X,\R)$. 
The map $\nu_X$ depends only on the abelianization of the group 
$G=\pi_1(X)$, so we may also denote it by $\nu_G$.  This map enjoys 
the following naturality property.  Let $\varphi\colon G\to K$ be a 
homomorphism between finitely generated groups.   We then 
have a commuting diagram,
\begin{equation}
\label{eq:homotopy}
\begin{gathered}
\begin{tikzcd}[column sep=3pc]
H^1(K,\K^{\times}) \ar[r, "\varphi^*"] \ar[d, "\nu_K"]
& H^1(G,\K^{\times})\phantom{\, .} \ar[d, "\nu_G"]\\
H^1(K,\R) \ar[r, "\varphi^*"] 
& H^1(G,\R)\, .
\end{tikzcd}
\end{gathered}
\end{equation}

\begin{definition}
\label{def:trop-w}
The {\em tropicalization}\/ of an algebraic subvariety 
$W\subset H^1(X,\C^{\times})$ is the closure in
 $H^1(X,\R)\cong \R^n$ of the image of 
 $W\times_{\C} \K \subset H^1(X,\K^{\times})$ 
under the map $\nu_X$,
\begin{equation}
\label{eq:trop-w}
\Trop(W)\coloneqq \overline{\nu_X (W\times_{\C} \K )}\, .
\end{equation}
\end{definition}

An extreme case is worth singling out.

\begin{lemma}
\label{lemma:trop-all}
If $W$ contains a connected component of $H^1(X,\C^{\times})$, 
then $\Trop(W)=H^1(X,\R)$.
\end{lemma}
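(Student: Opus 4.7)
The plan is to reduce the statement to computing the tropicalization of a single connected component of $H^1(X,\C^{\times})$, and then verify directly that this already fills up $H^1(X,\R)$. Monotonicity of the tropicalization construction is immediate from Definition \ref{def:trop-w}: if $V\subseteq W$ in $H^1(X,\C^{\times})$, then $V\times_{\C}\K\subseteq W\times_{\C}\K$, so their $\nu_X$-images and the Euclidean closures thereof nest. Thus it suffices to prove $\Trop(C)=H^1(X,\R)$ for any single connected component $C\subseteq W$ of $H^1(X,\C^{\times})$.

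Next I would recall from \S\ref{subsec:jumps} that every such component has the form $C=\rho\cdot\T_G^0$, where $\T_G^0\cong(\C^{\times})^n$ is the identity component (with $n=b_1(X)$) and $\rho$ is a torsion character of $G_{\ab}=H_1(X,\Z)$. Since $\rho$ takes values in roots of unity, viewing it inside $H^1(X,\K^{\times})$ via the embedding $\C^{\times}\hookrightarrow\K^{\times}$ gives $\val\circ\rho\equiv 0$; equivalently, $\nu_X(\rho)=0$ in $H^1(X,\R)$.

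Base-changing the component yields $C\times_{\C}\K=\rho\cdot(\K^{\times})^n$. Since $\val\colon\K^{\times}\to\Q$ is surjective by construction of $\K=\C\bra{t}$, so is the product map $\val^{\times n}\colon(\K^{\times})^n\to\Q^n$. Combining this with $\nu_X(\rho)=0$, I obtain
\[
\nu_X\bigl(C\times_{\C}\K\bigr) \;=\; \nu_X(\rho) + \Q^n \;=\; \Q^n,
\]
whose closure in $\R^n\cong H^1(X,\R)$ is all of $H^1(X,\R)$. Therefore $\Trop(C)=H^1(X,\R)$, and monotonicity finishes the proof. As a shortcut, one could instead invoke \eqref{eq:troptw} applied to the full-dimensional subtorus $\T_G^0$, using $\val(\rho)=0$ to kill the translation term.

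There is no serious obstacle in this argument; the only point worth flagging is the torsion-character translation, which is harmless precisely because $\C^{\times}$ sits inside $\K^{\times}$ with zero valuation, so different components of $H^1(X,\C^{\times})$ all tropicalize to the same full $\R^n$.
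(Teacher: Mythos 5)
Your proof is correct and takes essentially the same route as the paper: both reduce by monotonicity of $\Trop$ to a single translated copy of the identity component $\T_G^0$, then note that a full-dimensional translated subtorus tropicalizes to all of $H^1(X,\R)$. The paper invokes formulas \eqref{eq:tropt} and \eqref{eq:tropzt} at that last step, whereas you verify it directly from the surjectivity of $\val\colon\K^{\times}\to\Q$ and the vanishing of $\nu_X$ on $\C^{\times}$-valued characters -- and you even flag the citation route as the shortcut.
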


\begin{proof}
Let $\T=H^1(X,\C^{\times})$, and let $\T^0=(\C^{\times})^n$ be the 
identity component. By assumption, $W$ contains $\rho \T^0$, 
for some $\rho\in \T$.  Therefore, $\Trop(W)$ contains 
$\Trop(\rho \T^0)=\Trop(\T^0)=\R^n$, and we're done.
\end{proof}

\subsection{Tropicalized characteristic varieties}
\label{subsec:trop-charvar}
Recall from \eqref{eq:cv-base} that 
$\VV^{i}(X, \K)=\VV^{i}(X)\times_{\C} \K$.  Applying Definition \ref{def:trop-w} 
to the characteristic varieties $\VV^{i}(X)$, where $i\le q$,  we have that 
\begin{equation}
\label{eq:trop-cv}
\Trop( \VV^{i}(X))=\overline{\nu_X\big(\VV^{i}(X,\K)\big)}\, .
\end{equation}
As noted in \eqref{eq:valmap-bis}, the map $\nu_X$ factors through the 
coefficient homomorphism $\val_*\colon H^1(X,\K^{\times}) \to H^1(X,\Q)$. 
By \eqref{eq:trop-rat}, the set of rational points on $\Trop( \VV^{i}(X))$ 
consists of all elements of the form $\nu_X(\rho)$, for some character 
$\rho \colon \pi_1(X)\to \K^{\times}$ which belongs to $\VV^{i}(X,\K)$; that is,
\begin{equation}
\label{eq:trop-cv-rat}
\Trop(\VV^{i}(X))\cap H^1(X,\Q) = \nu_X\big(\VV^{i}(X,\K)\big)\, .
\end{equation}

These tropical varieties are homotopty-type invariants.
Indeed, suppose $f\colon X\to Y$ is a homotopy equivalence. 
Then the induced isomorphism, $f^*\colon H^1(Y,\Z)\to H^1(X,\Z)$,  
defines both a monomial isomorphism $\alpha=f^*$ on $H^1(-,\C^{\times})$ 
and an $\R$-linear isomorphism $A=f^*$ on $H^1(-,\R)$. 
As we noted in \S\ref{subsec:prop-cv}, the map $\alpha$ takes 
$\VV^{1}(Y)$ to $\VV^{1}(X)$.  Using now formula \eqref{eq:trop-mat} 
in this slightly more general context, we conclude that 
$A(\Trop(\VV^{1}(Y)))=\Trop(\VV^{1}(X))$.

In degree $i=1$, the tropicalized characteristic varieties enjoy the 
following naturality property.

\begin{proposition}
\label{prop:trop-v1-nat}
Let  $G$ be a finitely generated group, and let $\varphi\colon G\surj Q$ be a 
surjective homomorphism.   Then the induced $\R$-linear map,   
$\varphi^*\colon H^1(Q,\R) \inj H^1(G,\R)$, restricts to an embedding 
$\Trop( \VV^{1}(Q)) \inj \Trop( \VV^{1}(G))$.
\end{proposition}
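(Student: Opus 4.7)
The plan is to combine the functoriality of $\VV^1$ from Proposition \ref{prop:v1-nat}, applied over the Puiseux field $\K$ rather than $\C$, with the naturality of the valuation map $\nu$ encoded in the commutative diagram \eqref{eq:homotopy}, and then pass to closures in the real cohomology. Since $\varphi\colon G \surj Q$ is surjective, the induced homomorphism $\varphi_{\ab}\colon G_{\ab} \surj Q_{\ab}$ is also surjective, and consequently the pullback $\varphi^*$ is injective on $H^1(-, M)$ for any abelian coefficient module $M$; in particular both $\varphi^*\colon H^1(Q,\R) \to H^1(G,\R)$ and $\varphi^*\colon H^1(Q,\K^{\times}) \to H^1(G,\K^{\times})$ are injective. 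This takes care of the injectivity aspect of the embedding, leaving only the inclusion of the tropical varieties to prove.

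Next I would extend Proposition \ref{prop:v1-nat} from $\C$- to $\K$-valued characters. The proof of that proposition uses only that the characteristic varieties are cut out by determinantal ideals defined over $\Z$, a feature preserved under base change (formula \eqref{eq:cv-base}); thus the same argument yields $\varphi^*(\VV^1(Q,\K)) \subseteq \VV^1(G,\K)$. Combining this inclusion with the commutative square \eqref{eq:homotopy} applied to $\varphi$, I obtain
\[
\varphi^*\bigl(\nu_Q(\VV^1(Q,\K))\bigr) = \nu_G\bigl(\varphi^*(\VV^1(Q,\K))\bigr) \subseteq \nu_G\bigl(\VV^1(G,\K)\bigr).
\]

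To finish, I would take Euclidean closures on both sides. The map $\varphi^*\colon H^1(Q,\R) \to H^1(G,\R)$ is a linear injection between finite-dimensional real vector spaces, so it is a topological embedding onto its image (a closed linear subspace of $H^1(G,\R)$); in particular it is a closed map and commutes with taking closures. Invoking the definition \eqref{eq:trop-cv} of the tropicalized characteristic variety then yields
\[
\varphi^*\bigl(\Trop(\VV^1(Q))\bigr) = \overline{\varphi^*(\nu_Q(\VV^1(Q,\K)))} \subseteq \overline{\nu_G(\VV^1(G,\K))} = \Trop(\VV^1(G)),
\]
which, together with the injectivity of $\varphi^*$, is the required embedding. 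I do not expect any serious obstacle: the only subtle point is the commutation of $\varphi^*$ with closure, but this is immediate from its being a closed map.
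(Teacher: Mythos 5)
Your proposal is correct and follows essentially the same route as the paper's proof: both pass from $\C$- to $\K$-valued characters via the base-change formula \eqref{eq:cv-base}, invoke the $\K$-version of Proposition \ref{prop:v1-nat}, apply the commutative square \eqref{eq:homotopy}, and then take closures. You have simply filled in the two details the paper leaves implicit --- why the $\C$-statement of Proposition \ref{prop:v1-nat} transfers to $\K$, and why the linear injection $\varphi^*$ commutes with Euclidean closure --- and both of these fills are correct.
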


\begin{proof}
It follows from Proposition \ref{prop:v1-nat} that the induced morphism between 
$\K$-character groups, $\varphi^*\colon H^1(Q,\K^{\times}) \inj H^1(G,\K^{\times})$, 
restricts to an embedding $\VV^1(Q,\K) \inj \VV^1(G,\K)$.  Applying the valuation 
map from \eqref{eq:valmap} and using the commutativity of diagram \eqref{eq:homotopy}  
yields the claim. 
\end{proof}

The next result details the relationship between the exponential tangent cones 
and the tropicalizations of the characteristic varieties. 

\begin{proposition}
\label{prop:tau1 trop}
Let $X$ be a space as above. Then,
\begin{enumerate}
\item \label{tt1}
$\tau^{\R}_1(\VV^i(X))\subseteq \Trop(\VV^i(X))$, for all $i\le q$.
\item  \label{rt2}
Suppose there is a subtorus $T\subset \T_X^0$ such that 
$T \not\subset \VV^i(X)$, yet $\rho T\subset \VV^i(X)$ 
for some $\rho\in \T_X$. 
Then $\tau^{\R}_1(\VV^i(X))\subsetneqq \Trop(\VV^i(X))$.
\end{enumerate}
\end{proposition}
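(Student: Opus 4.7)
The plan is to reduce both parts to the corresponding results for subvarieties of a single torus, namely Propositions \ref{prop:tc trop} and \ref{prop:t1-trop}, by being careful about how $\tau_1^{\R}$ and $\Trop$ interact with the decomposition of $\T_X = H^1(X, \C^{\times})$ into connected components, each a translate of the identity component $\T_X^0 \cong (\C^{\times})^n$, with $n = b_1(X)$. Monotonicity of $\Trop$ with respect to inclusions (immediate from Definition \ref{def:trop-w}) will be used throughout.

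For part (1), the exponential map $\exp \colon \C^n \to (\C^{\times})^n$ has image contained in $\T_X^0$, so directly from \eqref{eq:taudef} one has $\tau_1(\VV^i(X)) = \tau_1(\VV^i(X) \cap \T_X^0)$, and likewise on real points. Now $\VV^i(X) \cap \T_X^0$ is an algebraic subvariety of $(\C^{\times})^n$, and its tropicalization in the sense of Definition \ref{def:trop-w} coincides with the one from \S\ref{subsec:trop-C}, since $\nu_X$ restricts to $\val^{\times n}$ on the identity component. Applying Proposition \ref{prop:tc trop} yields $\tau^{\R}_1(\VV^i(X) \cap \T_X^0) \subseteq \Trop(\VV^i(X) \cap \T_X^0)$, and monotonicity of $\Trop$ then gives the desired inclusion.

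For part (2), I would follow the template of Proposition \ref{prop:t1-trop}. By Lemma \ref{lem:subtori}, write $T = \exp(L \otimes_{\Q} \C)$ for some rational linear subspace $L \subseteq \Q^n$, so that $\Trop(T) = L \otimes_{\Q} \R$ by Corollary \ref{cor:trop trans}. The technical point is to show that translation by $\rho \in \T_X$ does not change the tropicalization, even when $\rho$ lies outside the identity component. Using the composite description \eqref{eq:valmap-bis}, $\nu_X$ is a group homomorphism which factors through $\val_* \colon H^1(X, \K^{\times}) \to H^1(X, \Q)$; since nonzero complex numbers are units of the valuation ring and thus have valuation zero, $\val_*$ annihilates the subgroup $H^1(X, \C^{\times}) \subseteq H^1(X, \K^{\times})$, and in particular $\nu_X(\rho) = 0$. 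Hence $\nu_X(\rho T \times_{\C} \K) = \nu_X(T \times_{\C} \K)$, so $\Trop(\rho T) = \Trop(T) = L \otimes_{\Q} \R$. Since $\rho T \subseteq \VV^i(X)$, monotonicity implies $L \otimes_{\Q} \R \subseteq \Trop(\VV^i(X))$. Conversely, $\tau_1(\VV^i(X))$ is a union of rationally defined linear subspaces by Lemma \ref{lem:tc-linear}; if $L \otimes_{\Q} \R$ were contained in $\tau^{\R}_1(\VV^i(X))$, then $L \otimes_{\Q} \C$ would lie inside $\tau_1(\VV^i(X))$, forcing $T = \exp(L \otimes_{\Q} \C) \subseteq \VV^i(X)$ by \eqref{eq:taudef}, contradicting the hypothesis.

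The main obstacle, and really the only delicate point, is recognizing the vanishing of $\val_*$ on complex characters, which makes translation by any $\rho \in \T_X$ invisible to the tropicalization; once that observation is in place, the rest is a direct transcription of the arguments already established for subvarieties of $(\C^{\times})^n$.
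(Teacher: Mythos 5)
Your proof is correct and follows the same route the paper takes, which simply refers back to Proposition~\ref{prop:tc trop} for part (1) and to a ``slight modification'' of the proof of Proposition~\ref{prop:t1-trop} for part (2). You have in fact spelled out exactly what that slight modification is: the key observation that $\nu_X$ (equivalently, $\val_*$) kills $\C$-valued characters, so that translating by $\rho\in\T_X$---even when $\rho$ lies outside the identity component $\T_X^0$---does not change the tropicalization, together with the reduction $\tau_1(\VV^i(X))=\tau_1(\VV^i(X)\cap\T_X^0)$ that justifies invoking the single-torus statements.
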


\begin{proof}
The first claim follows at once from Proposition \ref{prop:tc trop}, 
while the second claim follows from a slight modification of 
the proof of Proposition \ref{prop:t1-trop}.
\end{proof}

Finally, let  $G$ be a finitely generated group with $b_1(G)>0$, and let 
$H=G_{\ab}/\Tors(G_{\ab})$. Under some additional conditions on the 
Alexander polynomial $\Delta_G$, we can say more.

\begin{proposition}
\label{prop:trop-newt}
Suppose $\Delta_G$ is symmetric and 
$I^p_{H}\cdot ( \Delta_{G} ) = E_1(A_G)$, 
for some $p\ge 0$. Then the following hold for the tropical variety 
$Y=\Trop\big(\VV^1(G)\cap \T_G^0\big)$.
\begin{enumerate}
\item \label{lt1} $Y=\Trop(V(\Delta_G))\cup \{0\}$.
\item \label{lt2} $Y=-Y$. 
\item \label{lt3} $Y$ coincides with the positive-codimension skeleton of 
$\mathcal{F}(B_A)$, the face fan of the unit ball in the Alexander norm.
\end{enumerate}
\end{proposition}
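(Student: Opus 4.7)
The plan is to derive the three assertions from the structural identity of Proposition~\ref{prop:zz1-pi} together with the polytope/fan dualities and the tropical hypersurface description established in Section~\ref{sect:trop}. The symmetry assumption on $\Delta_G$ enters in two distinct ways: first, to make $V(\Delta_G)$ invariant under character inversion, and second, to identify via \eqref{eq:alex-ball} the Alexander norm ball with (a scalar multiple of) the polar dual of the Newton polytope.

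For (1), I apply Proposition~\ref{prop:zz1-pi} to write $\VV^1(G)\cap \T_G^0 = V(\Delta_G)\cup\{1\}$. Tropicalizing both sides and using the union formula \eqref{eq:trop-union}, the statement reduces to the observation that $\Trop(\{1\})=\{0\}$, which is immediate from Definition~\ref{def:trop-w}, since the trivial $\K$-character maps under $\nu_G$ to $0\in H^1(G,\R)$.

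For (2), the symmetry of $\Delta_G$ means the hypersurface $V(\Delta_G)\subset \T_G^0$ is invariant under the character-inversion automorphism $\rho\mapsto \rho^{-1}$. Since $\val(c^{-1})=-\val(c)$ on $\K^{\times}$, the map $\nu_G$ intertwines this inversion with negation on $H^1(G,\R)$. Hence $\nu_G(V(\Delta_G)\times_\C\K)$ is symmetric about the origin, and so is its Euclidean closure $\Trop(V(\Delta_G))$. Combining with part (1) and the fact that the origin is fixed under negation yields $Y=-Y$.

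For (3), I begin by choosing a representative for $\Delta_G$ (up to units $\pm h\in \Z{H}$) so that $\Newt(\Delta_G)$ is centrally symmetric about the origin; this is possible precisely because of the symmetry hypothesis, and it leaves $B_A$ unchanged, since the Alexander norm depends on $\Newt(\Delta_G)$ only through its support function modulo translation. Formula \eqref{eq:alex-ball} then gives $\Newt(\Delta_G)^{\Delta}=2B_A$, and the polarity relation \eqref{eq:polarity} combined with the scale-invariance of face fans yields
\[
\mathcal{N}(\Newt(\Delta_G))=\mathcal{F}(\Newt(\Delta_G)^{\Delta})=\mathcal{F}(2B_A)=\mathcal{F}(B_A).
\]
Because $\Delta_G\in \Z{H}$ has constant (trivially valued) coefficients, the hypersurface formula \eqref{eq:normal-newt} applies and identifies $\Trop(V(\Delta_G))$ with the positive-codimension skeleton of this fan. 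Combining with part (1) and noting that the apex $\{0\}$ already lies in that skeleton gives (3). The principal technical point is that \eqref{eq:polarity} requires the origin to be interior to $\Newt(\Delta_G)$; this is the main obstacle I anticipate, and it is addressed by restricting to the affine hull of $\Newt(\Delta_G)$ and treating the transverse directions separately, with the extreme case $\Delta_G\doteq 1$ verified directly from $\VV^1(G)\cap\T_G^0=\{1\}$.
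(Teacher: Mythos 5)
Your proof is correct and follows the same overall strategy as the paper: apply Proposition~\ref{prop:zz1-pi} to split off the isolated point, then invoke the Newton-polytope and normal-fan dualities from \S\ref{subsec:trop-C}. Part~(2) diverges mildly: you argue at the level of the variety, using that $V(\Delta_G)$ is invariant under character inversion $\rho\mapsto\rho^{-1}$ and that $\nu_G$ intertwines this with negation on $H^1(G,\R)$, whereas the paper reads off the same symmetry from $\Newt(\Delta_G)=-\Newt(\Delta_G)$ (after translation) together with the normal-fan description~\eqref{eq:normal-newt}; both routes are sound and essentially equivalent. For part~(3), you rightly flag that \eqref{eq:polarity} requires $0\in\operatorname{int}(\Newt(\Delta_G))$; the paper simply asserts this, and indeed it is not automatic (it fails precisely when $\Newt(\Delta_G)$ is not full-dimensional, i.e.\ when $\|\cdot\|_A$ is a degenerate seminorm). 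Your proposed workaround is only sketched, but you are, if anything, more careful than the paper's own proof on this point. One small correction: you claim to choose a representative of $\Delta_G$ whose Newton polytope is centered at the origin, but units in $\Z{H}$ only give integer translations, while the center of symmetry may sit at a half-integer lattice point. This is harmless, though, since the normal fan and the Alexander ball depend on $\Newt(\Delta_G)$ only up to arbitrary $\R$-translation, so one can simply translate the polytope in $\R^n$ without changing the underlying Laurent polynomial.
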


\begin{proof}
Since the Alexander ideal $( \Delta_{G} )$ satisfies the assumption of 
Proposition \ref{prop:zz1-pi}, we infer that 
$\VV^1(G)\cap \T_G^0=V(\Delta_G)\cup \{1\}$; 
the first claim follows at once. 

Let $P=\Newt(\Delta_G)$ be the Newton polytope of $\Delta_G$ 
inside $H_1(G,\R)=H\otimes \R$. From \eqref{eq:normal-newt}, 
we know that $\Trop(V(\Delta_G))$ coincides with $\mathcal{N}(P)^{\codim >0}$, 
the positive-codimension skeleton of the inner normal fan to $P$. 
Now, since $\Delta_G$ is symmetric, we have that $P=-P$, 
and the second claim follows.  

Finally, we know from \eqref{eq:alex-ball} that $P$ is twice the polar 
dual of $B_A$.  Moreover, $0\in \operatorname{int}(P)$; thus, by \eqref{eq:polarity},  
the inner normal fan to $P$ is the face fan of $B_A$. The last claim follows.
\end{proof}

\section{Bieri--Neumann--Strebel--Renz invariants}
\label{sec:bnsr}

In this section, we review the definition of the Sigma-invariants 
of a group $G$ and, more generally, of a space $X$, following 
the approach from \cite{Bi07, FGS, PS-plms, Su-pisa12}.

\subsection{The \texorpdfstring{$\Sigma$}{Sigma}-invariants of a chain complex}
\label{subsec:bnsr cc}
Let $C=(C_i,\partial_i)_{i\ge 0}$ be a chain complex over a ring $R$, 
and let $q$ be a positive integer.  We say $C$ is of {\em finite $q$-type}\/ 
if there is a chain complex $C'$ of finitely generated, projective 
(left) $R$-modules and a chain map $C'\to C$ inducing isomorphisms 
$H_i(C')\to H_i(C)$ for $i<q$ and an epimorphism 
$H_q(C')\to H_q(C)$. For a free chain complex $C$, this is equivalent 
to being chain-homotopy equivalent to a free chain complex $D$ 
for which $D_i$ is finitely generated for all $i\leq q$.

Now let $G$ be a finitely generated group.  The character sphere 
\begin{equation}
\label{eq:sg}
S(G)\coloneqq (\Hom(G,\R)\setminus\{0\})/\R^{+},
\end{equation}
is the set of nonzero homomorphisms $G\to \R$ modulo 
homothety.  To simplify notation, we will usually denote both 
a nonzero homomorphism $\chi\colon G\to \R$ and its equivalence 
class, $[\chi] \in S(G)$, by the same symbol, $\chi$.  The character 
sphere may be identified with the unit sphere $S^{n-1}$ in the 
real vector space $\Hom(G,\R)\cong \R^n$, where $n=b_1(G)$.  

Given a nonzero homomorphism $\chi\colon G\to \R$,  the set  
$G_{\chi}\coloneqq\{ g \in G \mid \chi(g)\ge 0\}$ 
is a submonoid of $G$, which depends only on  
$[\chi]\in S(G)$.  The monoid ring 
$\Z{G}_{\chi}$ is a subring of the group ring $\Z{G}$; 
thus, any $\Z{G}$-module naturally acquires the structure of a 
$\Z{G}_{\chi}$-module, by restriction of scalars.

\begin{definition}[\cite{FGS}]  
\label{def:sigma chain}
Let $C$ be a chain complex over $\Z{G}$.  For each 
integer $q\ge 0$, the {\em $q$-th 
Bieri--Neumann--Strebel--Renz invariant}\/ of $C$ 
is the set
\begin{equation}
\label{eq:sigmakc}
\Sigma^q(C)=\big\{\chi\in S(G) \mid 
\text{$C$ is of finite $q$-type over $\Z{G}_{\chi}$}\big\}\, .
\end{equation}
\end{definition}

Suppose now that $N\triangleleft G$ is a normal 
subgroup such that the quotient group $G/N$ is abelian.  Let  
$S(G,N)$ be the set of homomorphisms $\chi \in S(G)$ 
for which $N\le \ker (\chi)$. Then $S(G,N)$ 
is a great subsphere of $S(G)$, obtained by intersecting 
the unit sphere $S(G)\subset H^1(G,\R)$ 
with the image of the linear map 
$\kappa^*\colon H^1(G/N,\R) \inj H^1(G,\R)$, 
where  $\kappa\colon G\surj G/N$ is the canonical projection. 
Finally, let $C$ be a chain complex of free $\Z{G}$-modules, with 
$C_i$ finitely generated for $i\le q$, and let $N$ be a 
normal subgroup of $G$ such that $G/N$ is abelian. 
Then, as shown in \cite{FGS},  $C$ is of finite $q$-type when 
restricted to $\Z{N}$ if and only if $S(G,N)\subset \Sigma^q(C)$.

\subsection{The \texorpdfstring{$\Sigma$}{Sigma}-invariants of a CW-complex}
\label{subsec:fgs cw}
Let $X$ be a connected CW-complex with 
finite $1$-skeleton, and let $G=\pi_1(X,x_0)$ be its fundamental 
group.  A choice of classifying map $X\to K(G,1)$ yields   
an induced isomorphism, $H^1(G,\R) \isom H^1(X,\R)$, 
which identifies the respective unit spheres, 
$S(G)$ and $S(X)$.  The cell structure on $X$ lifts to a 
cell structure on the universal cover $\wX$, invariant under the 
action of $G$ by deck transformations. 
Thus, the cellular chain complex $C_*(\wX,\Z)$ 
is a chain complex of free $\Z{G}$-modules.  

\begin{definition}
\label{def:fgs cw}
For each $q> 0$, the {\em $q$-th 
Bieri--Neumann--Strebel--Renz invariant}\/ 
of $X$ is the subset of $S(X)$ given by
$\Sigma^q(X,\Z)=\Sigma^q(C_*(\wX,\Z))$.
\end{definition}

We will denote by $\Sigma^q(X,\Z)^{\compl}$ the complement 
of $\Sigma^q(X,\Z)$ in $S(X)$.  It is shown in \cite{FGS} that 
$\Sigma^q(X,\Z)$ is an open subset of $S(X)$, which depends 
only on the homotopy type of $X$.  
The $\Sigma$-invariants enjoy the following naturality property.

\begin{lemma}[\cite{FGS}] 
\label{lem:fgs-nat}
Let $f \colon X \to Y$ be a map between two finite CW-complexes, 
and assume there is a map $g \colon Y \to X$ such that $f\circ g\simeq \id_Y$. 
If $f^*\colon H^1(Y,\R)\to H^1(X,\R)$ is the induced 
homomorphism and $f^*\colon S(Y) \to S(X)$ is its restriction to 
character spheres, then 
$(f^*)^{-1}\big(\Sigma^q(X,\Z)\big) \subseteq  \Sigma^q(Y,\Z)$, for all $q > 0$. 
\end{lemma}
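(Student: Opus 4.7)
Let $\chi\in S(Y)$ with $\chi':=f^*\chi\in\Sigma^q(X,\Z)$; the goal is to show $\chi\in\Sigma^q(Y,\Z)$. Set $G_X=\pi_1(X)$ and $G_Y=\pi_1(Y)$, and adjust basepoints so that $fg\simeq\id_Y$ yields $f_*g_*=\id_{G_Y}$, making $f_*\colon G_X\twoheadrightarrow G_Y$ a split surjection with section $g_*$. Since $\chi'=\chi\circ f_*$, the homomorphism $f_*$ restricts to a surjection of submonoids $(G_X)_{\chi'}\twoheadrightarrow(G_Y)_\chi$, inducing a ring surjection $\Z(G_X)_{\chi'}\twoheadrightarrow\Z(G_Y)_\chi$.

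The plan is to exhibit $C_*(\widetilde{Y},\Z)$ as a $\Z G_Y$-chain retract of an intermediary complex $D_*$ that directly inherits finite $q$-type from $C_*(\wX,\Z)$. As the intermediary, take $\overline{X}:=f^*\widetilde{Y}$, the pullback of the universal cover of $Y$ along $f$; since $f_*$ is surjective, $\overline{X}$ is a connected regular $G_Y$-cover of $X$, with cellular chain complex $D_*=C_*(\overline{X},\Z)\cong\Z G_Y\otimes_{\Z G_X}C_*(\wX,\Z)$ as a $\Z G_Y$-complex. Using $fg\simeq\id_Y$ and the section $g_*$, one constructs a $G_Y$-equivariant lift $\tilde g\colon\widetilde{Y}\to\overline{X}$ of $g$ (its first coordinate is $g\circ p_Y$, its second coordinate is the $G_Y$-equivariant lift $\widetilde{fg}\colon\widetilde{Y}\to\widetilde{Y}$ of $fg$, which exists because $f_*g_*=\id_{G_Y}$). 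Composing $\tilde g$ with the canonical $G_Y$-equivariant projection $\overline{X}\to\widetilde{Y}$ recovers $\widetilde{fg}$, which is $G_Y$-equivariantly homotopic to $\id_{\widetilde{Y}}$ by lifting the given homotopy $fg\simeq\id_Y$. This realizes $C_*(\widetilde{Y},\Z)$ as a $\Z G_Y$-chain retract (up to chain homotopy) of $D_*$, and the retract persists after restriction of scalars to $\Z(G_Y)_\chi$.

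To show that $D_*$ has finite $q$-type over $\Z(G_Y)_\chi$, take a finite $q$-type replacement $P_*\to C_*(\wX,\Z)$ over $\Z(G_X)_{\chi'}$ (provided by the hypothesis) and apply the base-change functor $\Z(G_Y)_\chi\otimes_{\Z(G_X)_{\chi'}}-$. This sends finitely generated projective $\Z(G_X)_{\chi'}$-modules to finitely generated projective $\Z(G_Y)_\chi$-modules, and the crucial identification $\Z(G_Y)_\chi\otimes_{\Z(G_X)_{\chi'}}C_*(\wX,\Z)\cong D_*$ follows because $\Z G_X$ arises from $\Z(G_X)_{\chi'}$ by a flat, Ore-type localization (inverting elements of strictly positive $\chi'$-value). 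The induced $q$-equivalence $\Z(G_Y)_\chi\otimes_{\Z(G_X)_{\chi'}}P_*\to D_*$ then witnesses finite $q$-type of $D_*$ over $\Z(G_Y)_\chi$, which transfers along the retract from the previous paragraph to $C_*(\widetilde{Y},\Z)$, yielding $\chi\in\Sigma^q(Y,\Z)$.

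The main obstacle is verifying that the base-change functor preserves the $q$-equivalence $P_*\to C_*(\wX,\Z)$ in the submonoid-ring setting, since a priori $\Z(G_Y)_\chi$ need not be flat over $\Z(G_X)_{\chi'}$. The key structural tools are the flatness of $\Z G_X$ over $\Z(G_X)_{\chi'}$ via Ore localization and the splitting $(G_X)_{\chi'}=(\ker f_*)\cdot g_*((G_Y)_\chi)$ provided by the section, which controls the kernel of the ring surjection. An arguably cleaner route, bypassing these monoid-ring subtleties, is to work with the Novikov--Sikorav characterization of $\Sigma^q$ alluded to in the abstract: naturality of the twisted chain complexes under the ring map $\widehat{\Z G_X}_{-\chi'}\to\widehat{\Z G_Y}_{-\chi}$, combined with the chain retract constructed above, makes the implication immediate.
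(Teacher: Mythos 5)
This lemma is cited from \cite{FGS}; the paper offers no proof of its own, so there is nothing internal to compare against and I will just assess your argument on its merits. Your retract construction is correct: since $f_*g_*=\id_{G_Y}$ forces $f_*$ to be surjective, the pullback $\overline{X}=f^*\widetilde{Y}$ is a connected regular $G_Y$-cover of $X$, your $G_Y$-equivariant lift $\tilde g$ is well defined, and lifting the homotopy $fg\simeq\id_Y$ exhibits $C_*(\widetilde{Y},\Z)$ as a $\Z{G_Y}$-chain-homotopy retract of $D_*=C_*(\overline{X},\Z)\cong\Z{G_Y}\otimes_{\Z{G_X}}C_*(\widetilde{X},\Z)$; this retract persists after restriction of scalars to $\Z(G_Y)_\chi$.

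The ``main obstacle'' you flag is genuine as you phrased it but is avoidable, and you do not need flatness of $\Z(G_Y)_\chi$ over $\Z(G_X)_{\chi'}$. For free chain complexes, the definition of finite $q$-type recorded in \S\ref{subsec:bnsr cc} has the equivalent formulation ``chain-homotopy equivalent over $R$ to a free complex that is finitely generated in degrees $\le q$,'' and a chain-homotopy equivalence is preserved by \emph{any} additive functor, in particular by $\Z(G_Y)_\chi\otimes_{\Z(G_X)_{\chi'}}(-)$. So that step requires nothing. The only place you genuinely need structure is the identification $\Z(G_Y)_\chi\otimes_{\Z(G_X)_{\chi'}}C_*(\widetilde{X},\Z)\cong D_*$ as $\Z(G_Y)_\chi$-complexes; there, write $\Z{G_X}$ as a filtered colimit of the free rank-one right $\Z(G_X)_{\chi'}$-modules $g\cdot\Z(G_X)_{\chi'}$, $g\in G_X$, and commute the tensor with the colimit, using surjectivity of $f_*$ to see that the union of the images $f_*(g)\cdot\Z(G_Y)_\chi$ is all of $\Z{G_Y}$. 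That is the ``Ore-type'' flatness you invoke, and with it the direct argument closes.

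Your closing remark is the route I would lead with, and it is exactly the one compatible with this paper's toolkit. Via Theorem~\ref{thm:bns novikov}, the assertion becomes: $H_i(X,\widehat{\Z{G_X}}_{-\chi'})=0$ for $i\le q$ implies $H_i(Y,\widehat{\Z{G_Y}}_{-\chi})=0$ for $i\le q$. The chain-level retract gives, after tensoring with any left $\Z{G_Y}$-module $M$ and taking homology, that $H_i(Y,M)$ is a direct summand of $H_i(X,f^*M)$. Take $M=\widehat{\Z{G_Y}}_{-\chi}$; its restriction of scalars along the induced ring map $\widehat{\Z{G_X}}_{-\chi'}\to\widehat{\Z{G_Y}}_{-\chi}$ makes it a $\widehat{\Z{G_X}}_{-\chi'}$-module, and the vanishing of $H_i(X,f^*M)$ for $i\le q$ then follows from the vanishing of $H_i(X,\widehat{\Z{G_X}}_{-\chi'})$ by the K\"unneth spectral sequence \eqref{eq:base-change} already deployed in the proof of Theorem~\ref{thm:val}. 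This version is shorter and sidesteps the monoid-ring bookkeeping entirely.
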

Consequently, if $f$ is a homotopy equivalence, then 
$f^*(\Sigma^q(Y,\Z)) = \Sigma^q(X,\Z)$. 

We say that a subset $\Sigma\subset S^{n-1}$ is {\em symmetric}\/ if it 
is invariant under the antipodal map, i.e., $\Sigma=-\Sigma$. 
In general, the BNSR invariants are not symmetric; we will illustrate  
this phenomenon in Examples \ref{ex:bs12}, \ref{ex:deleq}, and \ref{ex:brown}. 
Nevertheless, as a consequence of Lemma \ref{lem:fgs-nat}, 
we have the following symmetry criterion.

\begin{proposition}
\label{prop:symm}
Suppose there is a homotopy equivalence 
$f\colon X\to X$ such that $f_*\colon H_1(X, \R) \to H_1(X, \R)$ 
is equal to $-\id_{H_1(X,\R)}$. 
Then $\Sigma^q(X,\Z)= -\Sigma^q(X,\Z)$, for all $q\ge 1$.
\end{proposition}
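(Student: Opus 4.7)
The plan is to combine the naturality statement in Lemma \ref{lem:fgs-nat} with the hypothesis on $f_\ast$. Since $f\colon X\to X$ is a homotopy equivalence, there is a homotopy inverse $g$, and applying the lemma with the pair $(f,g)$ and then with the pair $(g,f)$ yields the two inclusions
\[
(f^*)^{-1}\bigl(\Sigma^q(X,\Z)\bigr) \subseteq \Sigma^q(X,\Z)
\qquad\text{and}\qquad
(g^*)^{-1}\bigl(\Sigma^q(X,\Z)\bigr) \subseteq \Sigma^q(X,\Z).
\]
Because $g^* = (f^*)^{-1}$ on cohomology, the second inclusion is equivalent to $\Sigma^q(X,\Z)\subseteq f^*(\Sigma^q(X,\Z))$, so together we obtain the equality $f^*(\Sigma^q(X,\Z))=\Sigma^q(X,\Z)$ already recorded after the lemma.

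Next I would translate the hypothesis on $f_\ast$ to the cohomological side. The map $f^*\colon H^1(X,\R)\to H^1(X,\R)$ is the $\R$-linear dual of $f_\ast\colon H_1(X,\R)\to H_1(X,\R)$ via the Kronecker pairing, so $f_\ast=-\id_{H_1(X,\R)}$ forces $f^*=-\id_{H^1(X,\R)}$. Since the character sphere $S(X)$ is defined as $(H^1(X,\R)\setminus\{0\})/\R^{+}$, the map induced by $f^*$ on $S(X)$ sends each class $[\chi]$ to $[-\chi]$, i.e., it is the antipodal map. Therefore $f^*(\Sigma^q(X,\Z)) = -\Sigma^q(X,\Z)$, and comparing with the equality from the previous paragraph gives $\Sigma^q(X,\Z) = -\Sigma^q(X,\Z)$, as desired.

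There is no serious obstacle here: the argument is essentially a one-line bookkeeping exercise once Lemma \ref{lem:fgs-nat} is in hand. The only thing worth being careful about is that the naturality statement in that lemma already requires $f$ to admit a one-sided homotopy inverse in each direction, which is automatic here; and that the passage $f_\ast=-\id \rightsquigarrow f^*=-\id$ is genuinely at the level of \emph{real} (co)homology, so no delicate torsion issue intervenes.
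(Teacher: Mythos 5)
Your proof is correct and follows exactly the route the paper intends: it is stated as an immediate consequence of Lemma \ref{lem:fgs-nat} together with the remark just after it that $f^*(\Sigma^q(Y,\Z)) = \Sigma^q(X,\Z)$ when $f$ is a homotopy equivalence, which is what you re-derive. One trivial bookkeeping remark: since $(g^*)^{-1} = f^*$ as set maps, your second displayed inclusion actually says $f^*(\Sigma^q(X,\Z))\subseteq \Sigma^q(X,\Z)$, while the first gives $\Sigma^q(X,\Z)\subseteq f^*(\Sigma^q(X,\Z))$ (you attributed these the other way around), but the equality you extract and the rest of the argument are unaffected.
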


\subsection{The \texorpdfstring{$\Sigma$}{Sigma}-invariants of a group}
\label{subsec:sigma-groups} 

Let $G$ be a finitely generated group, and let $\Cay(G)$ be the Cayley 
graph associated to a fixed finite generating set. The invariant $\Sigma^1(G)$ 
of Bieri, Neumann and Strebel \cite{BNS} is the set of homomorphisms 
$\chi\in S(G)$ for which the induced subgraph of $\Cay(G)$ on vertex 
set $G_{\chi}$  is connected. The BNS set is an open subset of $S(G)$, 
which does not depend on the choice of finite generating set for $G$. 
The rational points on $S(G)$ correspond to epimorphisms 
$\chi\colon G\to \Z$; the kernel of $\chi$ is 
finitely generated if and only if both $\chi$ and $-\chi$ belong to $\Sigma^1(G)$. 
Additionally, the complements of the $\Sigma$-invariants enjoy 
the following naturality property.  

\begin{proposition}[\cite{BNS}] 
\label{prop:sigma1-nat}
Suppose $\varphi\colon G\surj Q$ is a 
surjective group homomorphism. Then the induced embedding, 
$\varphi^*\colon S(Q)\inj S(G)$, restricts to an injective map 
between the complements of the respective BNS-invariants,
$\varphi^*\colon \Sigma^1(Q)^{\compl} \inj  \Sigma^1(G)^{\compl}$.
\end{proposition}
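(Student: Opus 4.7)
The plan is to work directly from the original Cayley-graph definition of $\Sigma^1$ recalled in \S\ref{subsec:sigma-groups} and to prove the contrapositive: if a character $\chi\in S(Q)$ satisfies $\varphi^*(\chi)=\chi\circ\varphi \in \Sigma^1(G)$, then already $\chi\in \Sigma^1(Q)$. The injectivity of $\varphi^*$ on character spheres is a separate, essentially formal consequence of the surjectivity of $\varphi$: if $\chi\circ\varphi$ is a positive scalar multiple of $\chi'\circ\varphi$ on $G$, then evaluating on preimages shows that $\chi$ is the same positive scalar multiple of $\chi'$ on $\varphi(G)=Q$, so the two classes in $S(Q)$ agree.

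For the main implication, choose a finite generating set $S\subset G$, and observe that $T\coloneqq \varphi(S)$ generates $Q$; hence $\Sigma^1(Q)$ may be computed using $\Cay(Q,T)$. Writing $\psi\coloneqq\chi\circ\varphi\colon G\to\R$, one records the set-theoretic identity
\begin{equation*}
\varphi(G_\psi)=Q_\chi,
\end{equation*}
in which the inclusion $\subseteq$ is immediate, while the reverse inclusion holds because any lift $g\in\varphi^{-1}(q)$ of a point $q\in Q_\chi$ satisfies $\psi(g)=\chi(q)\ge 0$.

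Now suppose $\psi\in \Sigma^1(G)$, so the subgraph of $\Cay(G,S)$ induced on $G_\psi$ is connected. Given $q,q'\in Q_\chi$, lift them to $g,g'\in G_\psi$ by the identity above, and pick a path $g=g_0,g_1,\dots,g_n=g'$ in this induced subgraph; each step has the form $g_{i+1}=g_is_i^{\pm 1}$ with $s_i\in S$ and $g_i\in G_\psi$ throughout. Applying $\varphi$ and discarding those steps for which $\varphi(s_i)=1$ produces a path in $\Cay(Q,T)$ joining $q$ to $q'$ whose vertices all lie in $Q_\chi$, since $\chi(\varphi(g_i))=\psi(g_i)\ge 0$. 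Thus $\chi\in\Sigma^1(Q)$, establishing the contrapositive.

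The argument is essentially routine once one commits to the Cayley-graph description, and no serious obstacle arises. The only minor technical point is the possibility that some $\varphi(s_i)$ is trivial in $Q$, which is harmless since such steps may simply be excised from the image sequence without affecting either endpoint or the containment of vertices in $Q_\chi$. No Novikov homology input, higher-finiteness considerations, or tropical machinery is required here; the statement is intrinsic to the topological description of $\Sigma^1$ and is essentially the original naturality argument of \cite{BNS}.
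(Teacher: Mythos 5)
The paper does not reprove this proposition; it is cited directly from \cite{BNS}, so there is no internal proof to compare against. Your argument is correct and is essentially the standard Cayley-graph naturality argument from \cite{BNS}: the identity $\varphi(G_\psi)=Q_\chi$ is established cleanly via surjectivity, the independence of $\Sigma^1$ from the choice of finite generating set lets you work with $\Cay(Q,\varphi(S))$, and the pushforward-of-paths step (with trivial steps $\varphi(s_i)=1$ excised) correctly transfers connectivity of the induced subgraph on $G_\psi$ to that on $Q_\chi$. The injectivity of $\varphi^*$ on the character sphere is likewise handled properly as a direct consequence of surjectivity. No gaps.
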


The BNS invariant was generalized by Bieri and Renz \cite{BR}, 
as follows. For a $\Z{G}$-module $M$, define  
$\Sigma^{q}(G, M)\coloneqq\Sigma^{q}(F_{\bullet})$, where 
$F_{\bullet} \to M$ is a projective $\Z{G}$-resolution of $M$.  
In particular, this yields invariants $\Sigma^{q}(G,\Z)$, 
where $\Z$ is viewed as a trivial $\Z{G}$-module; 
clearly, $\Sigma^{q}(G,\Z)=\Sigma^{q} (K(G,1),\Z)$.  
Likewise, we have the invariants $\Sigma^{q}(G,\k)$, 
where $\k$ is a field. There is always an inclusion 
$\Sigma^{q}(G,\Z) \subseteq \Sigma^{q}(G,\k)$, 
but this inclusion may be strict.  

As noted in \cite[\S 1.3]{BR}, $\Sigma^1(G)=-\Sigma^1(G,\Z)$.  
It follows from Proposition \ref{prop:symm} that $\Sigma^1(G)$ 
is symmetric whenever $G$ admits an automorphism inducing 
minus the identity on $G_{\ab}\otimes \R$. 

\subsection{Novikov--Sikorav completion}
\label{subsec:novikov}
Let $G$ be a group. 
The {\em Novikov--Sikorav completion}\/ of the group ring $\Z{G}$ 
with respect to a homomorphism $\chi\colon G\to \R$ consists of 
all formal sums $\sum n_g g\in \Z^{G}$, 
having the property that, for each $c\in \R$, the set 
\begin{equation}
\label{eq:nov-sik}
\big\{g\in G \mid \text{$n_g \ne 0$ and $\chi(g) \ge c$}\big\}
\end{equation} 
is finite, see \cite{Novikov, Sk87,Far}. 
With the usual addition and with multiplication defined by 
$(\sum n_g g) \cdot  (\sum m_h h) = \sum (n_g m_h) g h$, 
the Novikov--Sikorav completion, $\widehat{\Z{G}}_{\chi}$, is a ring 
containing $\Z{G}$ as a subring. Consequently, $\widehat{\Z{G}}_{\chi}$ 
carries a natural structure of left $\Z{G}$-module (we will also view it as a right 
$\widehat{\Z{G}}_{\chi}$-module). For instance, if 
$G=\Z=\langle t \rangle$ and $\chi(t)=1$, then 
$\widehat{\Z{G}}_{\chi}=\{\sum_{i\le k} n_i t^i \mid \text{$n_i\in \Z$, 
for some $k\in \Z$}\}$. 

To see why $\widehat{\Z{G}}_{\chi}$ is a ring completion, let $U_m$ be 
the additive subgroup of $\Z{G}$ (freely) generated by the set 
$\{g\in G \mid \chi(g)\ge m\}$.  
Requiring the decreasing filtration $\{U_m\}_{m\in \Z}$ to form a 
basis of open neighborhoods of $0$ defines a topology 
on $\Z{G}$, compatible with the ring structure.  Then,
as noted in \cite[\S 4.2]{Bi07}, the Novikov--Sikorav ring is the 
completion of $\Z{G}$ with respect to this filtration:
\begin{equation}
\label{eq:completion}
\widehat{\Z{G}}_{-\chi}=\varprojlim_{m}\, \Z{G}/U_m\, .
\end{equation}

Moreover, the Novikov--Sikorav completion enjoys the following functoriality 
property.  Let $\phi\colon G\to K$ be a homomorphism, and 
let $\bar\phi\colon \Z^{G}\to \Z^{K}$ be its linear extension to 
formal sums. If $\chi\colon K\to \R$ is a character, then 
$\bar\phi$ restricts to a morphism of topological rings 
between the corresponding completions, $\hat\phi \colon  
\widehat{\Z{G}}_{\chi\circ \phi} \to  \widehat{\Z{K}}_{\chi}$. 

\subsection{Novikov--Sikorav homology}
\label{subsec:novikov-sikorav}
In his thesis \cite{Sk87}, J.-Cl.~Sikorav reinterpreted the BNS invariant 
of a finitely generated group $G$ in terms of Novikov homology. 
This interpretation was extended to all BNSR invariants 
by Bieri \cite{Bi07}, and later to the BNSR invariants of 
CW-complexes by Farber, Geoghegan and Sch\"{u}tz \cite{FGS}. 

Let $X$ be a connected CW-complex and let $G=\pi_1(X)$. 
Recall that the homology groups of $X$ with coefficients in a left 
$\Z{G}$-module $M$ are given by 
$H_{i}(X,M)\coloneqq H_i( C_*(\widetilde{X},\Z)\otimes_{\Z{G}} M )$, 
where $\widetilde{X}$ is the universal cover of $X$ and $C_*(\widetilde{X},\Z)$ 
is its cellular chain complex, viewed as a right $\Z{G}$-module via the 
cellular action of $G$ on the chains of $\widetilde{X}$. 
We will use in an essential way the following theorem, which expresses the 
BNSR invariants of $X$ as the vanishing loci for homology with 
coefficients in the Novikov--Sikorav completions of $G$.  

\begin{theorem}[\cite{FGS}]
\label{thm:bns novikov}
If $X$ is a connected CW-complex with finite $q$-skeleton, then    
\begin{equation}
\label{eq:bnsr fgs}
\Sigma^{q}(X, \Z)= \big\{ \chi \in S(X) \mid
H_{i}(X, \widehat{\Z{G}}_{-\chi})=0,\: \text{for all $i\le q$} \big\}\, .
\end{equation}
\end{theorem}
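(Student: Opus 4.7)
The plan is to unpack both sides of \eqref{eq:bnsr fgs} into conditions on the cellular chain complex $C_*=C_*(\widetilde X,\Z)$ of free $\Z G$-modules, and then prove the resulting algebraic equivalence. By Definitions \ref{def:sigma chain} and \ref{def:fgs cw}, the condition $\chi\in\Sigma^q(X,\Z)$ asserts that $C_*$ is of finite $q$-type when regarded as a chain complex over the monoid subring $R_\chi=\Z G_\chi$ by restriction of scalars, while $H_i(X,\widehat{\Z G}_{-\chi})=H_i(C_*\otimes_{\Z G}\widehat{\Z G}_{-\chi})$ by definition. The theorem therefore reduces to the algebraic equivalence: $C_*$ is of finite $q$-type over $R_\chi$ if and only if its Novikov homology vanishes in all degrees $i\le q$.

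The algebraic heart of the argument is the following Sikorav-style observation: the filtration $\{U_m\}$ from \eqref{eq:completion} makes $\widehat{\Z G}_{-\chi}$ into a topological ring in which any element $r$ whose support is contained in $\{g\in G : \chi(g)>0\}$ is topologically nilpotent, so that $1-r$ is invertible via the convergent geometric series $\sum_{k\ge 0} r^k$. This invertibility, together with flatness of $\widehat{\Z G}_{-\chi}$ as a right $R_\chi$-module, is what enables both directions of the equivalence to go through.

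For the direction $(\Rightarrow)$, I would fix a finite $q$-type model $\phi\colon D_*\to C_*$ over $R_\chi$, with $\phi$ inducing isomorphisms on $H_i$ for $i<q$ and a surjection on $H_q$, and then exploit the Brown-criterion viewpoint---finite $q$-type for $C_*$ over $R_\chi$ amounts to an equivariant finite $q$-skeleton for the $G_\chi$-action---to build explicit chain nullhomotopies after extending scalars to $\widehat{\Z G}_{-\chi}$, using the nilpotency observation to invert the ``positive-$\chi$'' parts of the differentials; this yields vanishing of Novikov homology through degree $q$. For the direction $(\Leftarrow)$, I would build $D_*$ inductively by attaching finitely generated free $R_\chi$-modules degree-by-degree: at each stage, the vanishing of Novikov homology supplies a formal bounding chain in $C_{i+1}\otimes_{\Z G}\widehat{\Z G}_{-\chi}$ for each cycle one needs to kill, and a truncation along the filtration $\{U_m\}$ replaces this formal bounding chain by a genuine $R_\chi$-chain supported on finitely many group elements, which is adjoined as the next batch of generators. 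The principal obstacle in both directions is the truncation-and-lift step---extracting finite $R_\chi$-data from formal Novikov data while maintaining coherence across degrees---which requires careful bookkeeping exploiting the discreteness of the value group of $\chi$ on a finite generating set, together with a telescoping argument ensuring that only finitely many attached generators suffice to close all cycles through degree $q$ simultaneously.
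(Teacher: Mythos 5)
This theorem is quoted verbatim from \cite{FGS} (building on Sikorav \cite{Sk87} and Bieri \cite{Bi07}), and the paper offers no proof of it, so there is no in-paper argument to compare against. Your sketch does correctly reduce the statement to an algebraic equivalence for the $\Z{G}$-complex $C_*(\widetilde{X},\Z)$, and it correctly identifies the role of the filtration $\{U_m\}$ and the invertibility of $1-r$ for $r$ supported at strictly positive $\chi$-level; that much is faithful to the Sikorav--Bieri--FGS line of argument.

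That said, the proposal has real gaps. You assert flatness of $\widehat{\Z{G}}_{-\chi}$ over the monoid ring $R_\chi = \Z{G}_{\chi}$; this is not an established fact, it is not what the standard proofs rely on, and it should be either justified or excised. More seriously, the truncation-and-lift step needs two corrections. First, a formal bounding chain with coefficients in $\widehat{\Z{G}}_{-\chi}$ has only finitely many terms below each $\chi$-level but typically has terms at strictly negative $\chi$-value; truncating at a fixed level therefore produces a finite $\Z{G}$-chain, not yet an $R_\chi$-chain, and one must also translate by a suitable $g_0\in G$ and verify that this translation is compatible with the inductive bookkeeping. Second, the termination of the construction---that finitely many new generators suffice simultaneously through degree $q$---is precisely what \cite{FGS} have to prove, and you name it as the main obstacle without supplying an argument; discreteness of $\chi$ on a finite generating set is not sufficient on its own, because the degree shifts in the boundary maps interact nontrivially with the filtration. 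In short, the outline reproduces the overall shape of the Sikorav--Bieri--FGS proof but does not close its hardest step, and it smuggles in one questionable claim along the way.
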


In particular, the BNS set $\Sigma^1(G)=-\Sigma^1(G,\Z)$ consists 
of those characters $\chi \in S(G)$ for which both $H_{0}(G, \widehat{\Z{G}}_{\chi})$ 
and $H_{1}(G, \widehat{\Z{G}}_{\chi})$ vanish.

\begin{remark}
\label{rem:signs}
There were several steps along the way where we had to make a choice 
of sign in the various definitions. As much as possible, we used the original 
sign conventions, but the literature varies in many of the particulars, so much 
care must be taken; see for instance the discussions 
in \cite[\S 1.3]{BR}, \cite[\S 3]{FGS}, and \cite[\S 4.4]{FT20}.  
The sign discrepancy between the BNS invariant 
$\Sigma^1(G)$ and the BNSR invariant $\Sigma^1(G, \Z)$ is as noted by 
Bieri and Renz in \cite{BR}.  The inequality from \eqref{eq:nov-sik} used in 
the definition of the Novikov--Sikorav completion is 
as in \cite{Novikov, Far, FGS,PS-plms,FT20}, but the 
opposite of \cite{Bi07,Ki19}, whence the minus signs 
in \eqref {eq:completion} and \eqref{eq:bnsr fgs}. 
In the end, as done in \cite{FT20}, the best way to check that 
all those signs and inequalities are consistent is to verify the 
computations on the Baumslag--Solitar group $\operatorname{BS}_{1,2}$ 
(the simplest group $G$ for which $\Sigma^1(G)$ is not symmetric); 
we will also do this in Example \ref{ex:bs12}.
\end{remark}

\section{\texorpdfstring{$\Sigma$}{Sigma}-invariants, cohomology jump loci, and tropicalization}
\label{sect:sig-trop-char}

In this section we prove our main result, relating the BNSR invariants 
of a space $X$ to the tropicalization of its characteristic varieties. As before, 
we will assume that $X$ has the homotopy type of a connected CW-complex 
with finite $q$-skeleton, for some $q\ge 1$, and $b_1(X)> 0$.

\subsection{\texorpdfstring{$\Sigma$}{Sigma}-invariants and exponential tangent cones}
\label{subsec:bnscv}
 
We start by recalling the main result from \cite{PS-plms}, which establishes 
a bridge between the $\Sigma$-invariants of a space $X$ as above and 
the real points on the exponential tangent cones to the respective 
characteristic varieties.  As in the previous section, $S(V)$ denotes 
the intersection of the unit sphere $S(X)\subset H^1(X,\R)$ with a subset 
$V\subset H^1(X,\R)$; in particular, if $V=\{0\}$, then $S(V)=\myempty$. 

\begin{theorem}[\cite{PS-plms}]
\label{thm:bns tau}
Let $X$ be as above. Then, 
\begin{equation}
\label{eq:bns-tau bound}
\Sigma^q(X, \Z)\subseteq 
S( \tau^{\R}_1(\VV^{\le q}(X)))^{\compl}. 
\end{equation}
\end{theorem}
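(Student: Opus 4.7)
The plan is to combine the Novikov--Sikorav reformulation of the BNSR invariants (Theorem \ref{thm:bns novikov}) with the structure theorem for exponential tangent cones (Lemma \ref{lem:tc-linear}), reducing the problem to a statement about the equivariant cellular chain complex of the maximal abelian cover. Let $\chi\in S(\tau_1^{\R}(\VV^{\le q}(X)))$, and set $G=\pi_1(X)$; by Theorem \ref{thm:bns novikov}, the aim is to exhibit some $i\le q$ with $H_i(X,\widehat{\Z{G}}_{-\chi})\ne 0$.

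First, I would invoke Lemma \ref{lem:tc-linear} to find a rationally defined linear subspace $L\otimes_{\Q}\R\subseteq \tau_1^{\R}(\VV^{\le q}(X))$ containing $\chi$; by Definition \ref{def:exp tcone}, the algebraic subtorus $T=\exp(L\otimes_{\Q}\C)$ then lies entirely in $\VV^{\le q}(X)$. Refining $L$ if necessary, I may assume $T\subseteq \VV^i(X)$ for some fixed $i\le q$. Using the determinantal description of the characteristic varieties recalled in \S\ref{subsec:jumps}, this means that the size-$c_i$ minors of the block boundary matrix $\partial^{\ab}_{i+1}\oplus \partial^{\ab}_{i}$ of the cellular chain complex of the maximal abelian cover $X^{\ab}$ vanish identically on $T$.

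Second, I would promote this vanishing on $T$ into a Novikov--Sikorav nonvanishing. The direction $-\chi$ naturally selects, via the composite $G\surj G_{\ab}\surj H$ and the exponential map on $L\otimes_{\Q}\C$, a one-parameter family of characters $\rho_{\lambda}=\exp(\lambda\chi)$ on $T$ that extend to ring morphisms $\widehat{\C{G}}_{-\chi}\to \C$ for suitable $\lambda$ in a half-plane (because the tails of a Novikov series converge against such characters). Since the relevant minors vanish at every $\rho_{\lambda}$, the image of $\partial^{\ab}_{i+1}\oplus \partial^{\ab}_{i}$ in the completed chain complex $C_*(\widetilde X,\Z)\otimes_{\Z{G}}\widehat{\C{G}}_{-\chi}$ has a nontrivial $i$-th cokernel, yielding a nonzero class in $H_i(X,\widehat{\C{G}}_{-\chi})$. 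A universal coefficient descent from $\C$ to $\Z$ in the spirit of \cite{FGS, PS-plms} then transports the nonvanishing to $H_i(X,\widehat{\Z{G}}_{-\chi})$, as required.

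The main obstacle will be the second step, specifically the construction of enough ring morphisms $\widehat{\C{G}}_{-\chi}\to \C$ to detect the determinantal vanishing on $T$ and convert it into a nonzero homology class. The difficulty stems from the fact that $\widehat{\Z{G}}_{-\chi}$ is in general both noncommutative and non-Noetherian, so the usual dictionary between Fitting ideals and cokernels must be replaced by a careful analysis of how the completion filtration on $\widehat{\Z{G}}_{-\chi}$ interacts with the one-parameter subgroup of $T$ singled out by $\chi$, together with a delicate control of which characters actually lift to the whole Novikov ring.
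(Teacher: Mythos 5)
The central step of your argument---constructing ring morphisms $\widehat{\C{G}}_{-\chi}\to\C$ by evaluating Novikov--Sikorav series at the complex characters $\rho_{\lambda}=\exp(\lambda\chi)$---does not work, and the obstacle is not one of ``delicate control'' but of basic convergence. The completion $\widehat{\Z{G}}_{-\chi}$ (or its $\C$-analogue) consists of formal sums $\sum n_g\, g$ where the finiteness condition \eqref{eq:nov-sik} constrains only the \emph{support}; the coefficients $n_g$ are unrestricted integers (or complex numbers). Evaluating at $\rho_\lambda$ yields $\sum n_g\, e^{\lambda\chi(g)}$, and while $\abs{e^{\lambda\chi(g)}}\to 0$ when $\operatorname{Re}(\lambda)<0$, this decay is easily overwhelmed: for $G=\Z$ with $\chi(k)=k$, the element $\sum_{k\ge 0}k!\,t^k$ lies in $\widehat{\Z{G}}_{-\chi}$, and $\sum k!\,e^{\lambda k}$ diverges for \emph{every} $\lambda\in\C$. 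So no ring morphism $\widehat{\Z{G}}_{-\chi}\to\C$ extending a character $G\to\C^{\times}$ exists. This is an inherently Archimedean obstruction: with the usual absolute value on $\C$, summability requires coefficient decay, which the completion does not enforce.

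The paper sidesteps exactly this by changing the target field. In Theorem~\ref{thm:val}, one starts with a character $\rho\colon G\to\k^{\times}$ into a field $\k$ carrying a \emph{non-Archimedean} valuation $\upsilon$ with $\upsilon\circ\rho=\chi$, and passes to the topological completion $\hat\k$; in the non-Archimedean world a series converges as soon as its terms tend to zero, so the support condition alone guarantees that $\bar\rho\colon\Z{G}\to\k$ extends to $\hat\rho\colon\widehat{\Z{G}}_{-\chi}\to\hat\k$, regardless of coefficient size. To produce such a $\rho$ living on the subtorus $T=\exp(L\otimes_{\Q}\C)\subset\VV^{\le q}(X)$ that you identify (your first step is fine), one works over the Puiseux field $\K=\C\bra{t}$: Proposition~\ref{prop:tc trop} shows $\tau_1^{\R}(W)\subseteq\Trop(W)$ precisely by exhibiting $\K$-points of $T\times_{\C}\K$ with prescribed valuation $\chi$, and Theorem~\ref{thm:bns-trop} then upgrades this to the desired inclusion, using the K\"unneth spectral sequence \eqref{eq:base-change} in place of your direct ``vanishing of minors $\Rightarrow$ nontrivial cokernel'' step, thereby avoiding the need to accumulate enough morphisms to $\C$ to detect a determinantal locus. (The proof originally cited from \cite{PS-plms} is different still: it works with Novikov Betti numbers $b_i(X,\chi)$ over the commutative Novikov ring of $H=G_{\ab}/\Tors$ rather than with the full $\widehat{\Z{G}}_{-\chi}$.) In short, the bridge from the exponential tangent cone to Novikov--Sikorav homology has to pass through a non-Archimedean valued field; it cannot be built over $\C$.
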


Qualitatively, this theorem says that each BNSR set
$\Sigma^{q}(X, \Z)$ is contained in the complement of a union of 
rationally defined great subspheres.  The proof of this result, given 
in \cite[Prop. 8.5 and Thm. 9.1]{PS-plms}, 
makes use of the Novikov Betti numbers, $b_i(X,\chi)$, associated 
to an additive character $\chi\in S(X)$, and involves  showing that
\begin{equation}
\label{eq:nov-tau}
-\chi\in \Sigma^q(X,\Z) \Rightarrow 
b_{\le q}(X, \chi)=0 \Leftrightarrow 
\chi\not\in \tau^{\R}_1\big(\VV^{\le q}(X)\big)\, .
\end{equation}
We will give in Corollary \ref{cor:sigma-trop-tau} a different proof 
of Theorem \ref{thm:bns tau}, which does not rely on the (forward) 
implications from \eqref{eq:nov-tau}, yet reaches a stronger conclusion. 

Now suppose the space $X$ is {\em  formal}, in the sense of rational homotopy 
theory.  Then, by the Tangent Cone Theorem from \cite{DP-ccm, DPS-duke},
\begin{equation}
\label{eq:tcone}
\tau_1(\VV^i(X))=\RR^i(X)
\end{equation}
for all $i\le q$, where $\RR^i(X)\subseteq H^1(X,\C)$ 
are the {\em resonance varieties}\/ associated to the cohomology algebra $H^*(X,\C)$; 
see \cite{Su-tcone, Su-tc3d} for more on this.  Letting 
$\RR^i(X, \R)=\RR^i(X)\cap H^1(X,\R)$ be the real resonance varieties, 
this allows us to replace  $\tau^{\R}_1(\VV^i(X))$ by $\RR^i(X,\R)$ in 
Proposition \ref{prop:tau1 trop}; moreover, the following inclusion 
holds in the formal setting,
\begin{equation}
\label{eq:bns-res bound}
\Sigma^q(X, \Z)\subseteq 
S\big( \RR^{\le q}(X,\R)\big)^{\compl}. 
\end{equation}

In some instances, which are treated in detail in  \cite{PS-plms}, the inclusions 
\eqref{eq:bns-tau bound} or \eqref{eq:bns-res bound} hold as equalities. 
We briefly recall those examples. 

\begin{example}
\label{ex:nilmanifold} 
Let $X$ be a nilmanifold.  Then $\Sigma^i(X, \Z) = S(X)$, 
while $\VV^i(X)=\{1\}$ and so $\tau^{\R}_1(\VV^{i}(X))=\{0\}$, for all $i$.  
Thus, $\Sigma^q(X, \Z)=S( \tau^{\R}_1(\VV^{\le q}(X)))^{\compl}$, for all $q$.
\end{example}

\begin{example}
\label{ex:RAAG} 
Associated to every finite simple graph $\Gamma$ there is a 
right-angled Artin group, $G=G_{\Gamma}$. Such a group admits 
as classifying space a finite CW-complex which is formal. As shown 
in \cite{PS-plms} (based on computations from \cite{MMV} and \cite{PS-adv}), 
the  equality $\Sigma^q(G,\R) = S(\RR^{\le q}(G,\R))^{\compl}$ holds for all $q$.  
Moreover, $\Sigma^q(G, \Z)= S(\RR^{\le q}(G, \R))^{\compl}$, provided  
the homology groups of certain subcomplexes in the 
flag complex of $\Gamma$ are torsion-free. This condition is always 
satisfied in degree $q=1$, giving $\Sigma^1(G) =  S(\RR^1(G, \R))^{\compl}$.
\end{example}

In general, though---as we shall see in a number of examples in the last 
few sections---the inclusions \eqref{eq:bns-tau bound} 
and \eqref{eq:bns-res bound} are strict, even when $q=1$.

\subsection{Characters and valuations}
\label{subsec:char-val}

In order to establish our main result, we will rely instead on \cite[Thm.~10.1]{PS-plms}.
For completeness, we outline the proof of this theorem, with some additional 
details and explanations provided.

\begin{theorem}[\cite{PS-plms}]
\label{thm:val}
Let $X$ be a space as above, and let $\k$ be an arbitrary field. Suppose 
$\rho\colon \pi_1(X)\to \k^{\times}$ is a multiplicative character 
such that $\rho \in \VV^{\le q}(X, \k)$. 
Let $\upsilon\colon \k^{\times}\to \R$ be the homomorphism 
defined by a valuation on $\k$, and write $\chi=\upsilon\circ \rho$.  
If the additive character $\chi\colon \pi_1(X)\to \R$ 
is nonzero, then $\chi \not\in \Sigma^q(X, \Z)$.
\end{theorem}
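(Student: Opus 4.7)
The plan is to prove the contrapositive via the Novikov--Sikorav characterization (Theorem \ref{thm:bns novikov}): assume $\chi \in \Sigma^{q}(X,\Z)$, so $H_{i}(X,\widehat{\Z{G}}_{-\chi}) = 0$ for all $i \le q$, and derive a contradiction from the hypothesis $\rho \in \VV^{\le q}(X,\k)$.

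The first key step is to promote $\rho$ to a continuous ring homomorphism $\hat{\rho} \colon \widehat{\Z{G}}_{-\chi} \to \hat{\k}$, defined by $\sum_{g} n_{g}\,g \mapsto \sum_{g} n_{g}\,\rho(g)$, where $\hat{\k}$ denotes the completion of $\k$ in the non-Archimedean topology given by $|x| = \exp(-\upsilon(x))$. By the definition of $\widehat{\Z{G}}_{-\chi}$, only finitely many support elements $g$ satisfy $\chi(g) \le M$ for any given $M \in \R$, so $\upsilon(\rho(g)) = \chi(g) \to +\infty$ along the support; since $\upsilon(n_{g}) \ge 0$ for every integer $n_{g}$, the terms $n_{g}\rho(g)$ tend to $0$ and the image series converges in $\hat{\k}$. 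Multiplicativity of $\hat{\rho}$ reduces to the convolution rule in $\widehat{\Z{G}}_{-\chi}$ together with the continuity of multiplication in $\hat{\k}$.

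Via $\hat{\rho}$, the field $\hat{\k}_{\rho}$ (that is, $\hat{\k}$ with $G$-action by $\rho$) becomes a $\widehat{\Z{G}}_{-\chi}$-module. Let $F_{*} = C_{*}(\wX) \otimes_{\Z{G}} \widehat{\Z{G}}_{-\chi}$ be the free $\widehat{\Z{G}}_{-\chi}$-chain complex computing $H_{*}(X,\widehat{\Z{G}}_{-\chi})$, which vanishes in degrees $\le q$ by assumption. The hyperhomology spectral sequence
\begin{equation*}
E^{2}_{r,s} = \Tor_{r}^{\widehat{\Z{G}}_{-\chi}}\!\bigl(H_{s}(F_{*}),\, \hat{\k}_{\rho}\bigr) \Longrightarrow H_{r+s}(X,\hat{\k}_{\rho})
\end{equation*}
has $E^{2}_{r,s} = 0$ for all $s \le q$, so for every $n \le q$ each term on the line $r + s = n$ with $r \ge 0$ has $s \le n \le q$ and vanishes; hence $H_{n}(X,\hat{\k}_{\rho}) = 0$ for $n \le q$. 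On the other hand, $\hat{\k}$ is free, hence flat, over $\k$, so $H_{i}(X,\hat{\k}_{\rho}) \cong H_{i}(X,\k_{\rho}) \otimes_{\k} \hat{\k}$, and the hypothesis $\rho \in \VV^{\le q}(X,\k)$ furnishes some index $i \le q$ with $H_{i}(X,\k_{\rho}) \ne 0$. Since $\hat{\k}$ is a nonzero $\k$-vector space, this tensor product is nonzero, contradicting the previous display.

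The main obstacle I expect is the construction of $\hat{\rho}$: one must verify that the convolution of two series satisfying the Novikov support condition is again such a series, that its image converges in $\hat{\k}$, and that the product of the two image series equals the image of the convolution. This is precisely the place where the sign convention (namely, $-\chi$ in the Novikov ring matched with $\chi = \upsilon \circ \rho$) has to fall out correctly, and it is also where the assumption $\chi \ne 0$ is implicitly used, since otherwise $\widehat{\Z{G}}_{-\chi}$ degenerates to $\Z{G}$ and no topological completeness of $\hat{\k}$ is needed or useful.
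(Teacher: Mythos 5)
Your argument follows the paper's proof essentially line for line: construct the ring map $\hat{\rho}\colon \widehat{\Z G}_{-\chi}\to\hat{\k}$ via the convergence estimate from the Novikov support condition, apply the K\"unneth (base-change) spectral sequence with $K_*=C_*(\wX)\otimes_{\Z G}\widehat{\Z G}_{-\chi}$ and $M=\hat{\k}_{\hat\rho}$ to kill $H_{\le q}(X,\hat{\k})$, and then contradict $\rho\in\VV^{\le q}(X,\k)$. The only cosmetic difference is that you finish by invoking flatness of $\hat{\k}$ over $\k$ to identify $H_i(X,\hat{\k}_\rho)\cong H_i(X,\k_\rho)\otimes_\k\hat{\k}$, whereas the paper packages the same fact as compatibility of characteristic varieties with field extension (its formula \eqref{eq:cv-ext}); these are interchangeable.
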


\begin{proof}
Let $\hat{\k}$ be the topological completion of $\k$ with respect to the absolute  
value defined by the valuation $\upsilon$.  Then $\hat{\k}$ is a field, and 
the map to the completion, $\iota\colon \k \inj \hat{\k}$, is a field extension.

Now let $G=\pi_1(X)$. Every character $\rho\colon G\to \k^{\times}$ extends linearly to 
a ring map, $\bar\rho\colon \Z{G}\to \k$.  Since $\chi=\upsilon\circ \rho$, 
formula \eqref{eq:completion} allows us to extend 
$\bar\rho$ to a morphism of topological rings, 
$\hat{\rho}\colon \widehat{\Z{G}}_{-\chi}\to \hat{\k}$.  
This makes $\hat{\k}$ into a $\widehat{\Z{G}}_{-\chi}$-module, 
denoted $\hat{\k}_{\hat{\rho}}$; restricting scalars via the inclusion  
$ \Z{G} \inj \widehat{\Z{G}}_{-\chi}$ yields the $\Z{G}$-module 
$\hat{\k}_{\iota\circ \rho}$, defined by 
the character $\iota\circ \rho\colon G\to \hat\k^{\times}$.

For a ring $R$, a bounded below chain complex of flat right $R$-modules 
$K_{*}$, and a left $R$-module $M$, there is a (right half-plane, boundedly 
converging) K\"{u}nneth spectral sequence, 
\begin{equation}
\label{eq:base-change}
E^2_{ij}=\Tor^R_i(H_j(K),M) \Rightarrow H_{i+j}(K\otimes_R M)\, ,
\end{equation}
see \cite[Thm.~5.6.4]{Weibel}. 
We will apply this spectral sequence to the ring $R=\widehat{\Z{G}}_{-\chi}$, 
the chain complex of free $R$-modules 
$K_*= C_*\big(\widetilde{X},\Z\big) \otimes_{\Z{G}} \widehat{\Z{G}}_{-\chi}$, and the 
$R$-module $M=\hat{\k}_{\hat\rho}$. 

Let $\rho \in \VV^{\le q}(X, \k)$, and suppose that 
$\chi=\upsilon\circ \rho$ belongs to $\Sigma^q(X, \Z)$. 
By Theorem \ref{thm:bns novikov}, this condition is 
equivalent to the vanishing of $H_{j}(X, \widehat{\Z{G}}_{-\chi})$ for all $j\le q$;   
that is, $H_{j}(K) =0$ for $j\le q$.  Therefore, $E^2_{ij}=0$ for $j\le q$. 
Noting that 
\begin{equation}
\label{eq:coefficients}
K\otimes_R M= C_*\big(\widetilde{X},\Z\big) \otimes_{\Z{G}} 
\widehat{\Z{G}}_{-\chi} \otimes_{\widehat{\Z{G}}_{-\chi}} \hat{\k}_{\hat\rho} 
=  C_*\big(\widetilde{X},\Z\big) \otimes_{\Z{G}} \hat\k_{\iota\circ \rho}\, ,
\end{equation}
we infer from \eqref{eq:base-change}  that 
$H_{i+j}(X,\hat{\k}_{\iota\circ \rho})=0$ for $j\le q$, and so 
$H_{j}(X,\hat{\k}_{\iota\circ \rho})=0$ for $j\le q$. From the definition 
of the characteristic varieties, this is equivalent to 
$\iota\circ \rho \notin \VV^{\le q}(X, \hat\k)$. Hence, by \eqref{eq:cv-ext}, 
$\rho \notin \VV^{\le q}(X, \k)$, contradicting our hypothesis on $\rho$. 
Therefore, $\chi\notin \Sigma^q(X, \Z)$, and we are done.
\end{proof}

The above theorem builds on an idea that goes back to Bieri and Groves \cite{BGr}.
A particular case of Thorem \ref{thm:val}---for $X=K(G,1)$, $q=1$, and $\upsilon$ a 
discrete valuation---was previously proved by Delzant in \cite[Prop.~1]{De08}, 
using the interpretation of $\Sigma^1(G)$ in terms of $G$-actions on trees 
given by Brown in \cite{Br}.

\subsection{Main result}
\label{subsec:main}

We are now in a position to state and prove our main result. 
Once again, let $X$ be a connected CW-complex 
with finite $q$-skeleton, for some $q\ge 1$.  We place ourselves 
in the framework from Sections \ref{sect:trop} and \ref{sect:trop-cv}, and work 
with the base field $\k=\C$ and its extension to the field of Puiseux series, 
$\K=\C\bra{t}$, endowed with its usual valuation map, $\val\colon \K^{\times}\to \Q$. 
 
\begin{theorem}
\label{thm:bns-trop}
Let $\VV^{\le q}(X)\subset H^1(X,\K^{\times})$ be the union of the characteristic 
varieties of $X$ in degrees up to $q$, and let $\Trop( \VV^{\le q}(X))\subset H^1(X,\R)$ 
be its tropicalization. Then 
\begin{equation}
\label{eq:bound}
\Sigma^q(X,\Z) \subseteq S\big(\!\Trop( \VV^{\le q}(X))\big)^{\compl}.
\end{equation}
\end{theorem}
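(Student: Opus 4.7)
The plan is to reduce the statement to the case of rational characters in $\Trop(\VV^{\le q}(X))$ by combining the openness of $\Sigma^q(X,\Z)$ with the rational polyhedral structure of the tropical variety, and then to apply Theorem~\ref{thm:val} over the Puiseux field $\K=\C\bra{t}$ equipped with its standard valuation $\val\colon \K^\times \to \Q$.

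First I would argue by contradiction. Suppose there is some class $[\chi] \in \Sigma^q(X,\Z)$ that also lies in $S(\Trop(\VV^{\le q}(X)))$, so that a nonzero representative $\chi \in H^1(X,\R)$ lies in $\Trop(\VV^{\le q}(X))$. Since $\Sigma^q(X,\Z)$ is an open subset of $S(X)$ (see \S\ref{subsec:fgs cw}), its preimage under radial projection is an open cone $\tilde\Sigma^q \subseteq H^1(X,\R)\setminus\{0\}$ containing $\chi$. Recall from \S\S\ref{subsec:trop}--\ref{subsec:trop-C} that $\Trop(\VV^{\le q}(X))$ is a rational polyhedral fan; consequently $\chi$ lies in the relative interior of some rational cone $C$ of this fan, and rational points are dense in $C$. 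I can therefore choose a nonzero rational $\chi' \in C \cap \tilde\Sigma^q$, whose class in $S(X)$ still belongs to $\Sigma^q(X,\Z)$ by positive homogeneity.

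Next I would appeal to the identity \eqref{eq:trop-cv-rat}, extended to the union $\VV^{\le q}(X)$ via \eqref{eq:trop-union}, to identify the $\Q$-points of the tropical variety:
\[
\Trop(\VV^{\le q}(X)) \cap H^1(X,\Q) = \nu_X\big(\VV^{\le q}(X,\K)\big).
\]
Thus there exists a $\K$-character $\rho\colon \pi_1(X)\to \K^\times$ belonging to $\VV^{\le q}(X,\K)$ with $\nu_X(\rho)=\chi'$; equivalently, the additive character $\chi' = \val \circ \rho$. Since $\chi' \ne 0$, Theorem~\ref{thm:val} applied with $\k=\K$ and $\upsilon=\val$ gives $[\chi'] \notin \Sigma^q(X,\Z)$, contradicting the choice of $\chi'$ above.

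The bulk of the homological content has been offloaded to Theorem~\ref{thm:val}, whose proof via the K\"unneth spectral sequence and the Novikov--Sikorav interpretation of $\Sigma^q$ is the technical heart of the matter. The remaining work---locating a rational approximation within the same relatively open cone of $\Trop(\VV^{\le q}(X))$---is routine thanks to the rational polyhedral structure, so I do not expect a serious obstacle beyond being careful to preserve both memberships (in $\tilde\Sigma^q$ and in the same cell $C$) when passing to $\chi'$.
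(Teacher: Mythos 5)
Your proposal is correct and follows essentially the same route as the paper's proof: both hinge on identifying the rational points of $\Trop(\VV^{\le q}(X))$ with $\nu_X(\VV^{\le q}(X,\K))$ via \eqref{eq:trop-cv-rat}, feeding each such point into Theorem~\ref{thm:val} with $\k=\K$ and $\upsilon=\val$, and then upgrading from rational points to the whole tropical variety by a density argument. The only difference is cosmetic: the paper argues directly that all rational points of $S(\Trop(\VV^{\le q}(X)))$ lie in the closed set $\Sigma^q(X,\Z)^{\compl}$ and invokes density once, whereas you phrase the same density step as a contradiction using the openness of $\Sigma^q(X,\Z)$ and the rational polyhedral structure of the fan.
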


\begin{proof}
Let $\rho\in H^1(X,\K^{\times})$ be a $\K$-valued, multiplicative character.  
Composing the valuation map 
$\val\colon \K^{\times}\to \Q$ with the homomorphism 
$\rho\colon \pi_1(X)\to \K^{\times}$, we obtain  
an additive character, $\chi\coloneqq\val\circ \rho\colon \pi_1(X) \to \Q$, 
which may be viewed as an element of $H^1(X,\Q)$, that is, a rational 
point on $H^1(X,\R)$.  Moreover, if $\chi$ is nonzero, then $\chi$ 
determines a rational point on $S(X)$.

Now suppose $\rho$ belongs to the characteristic variety 
$\VV^{\le q}(X, \K)=\VV^{\le q}(X) \times_{\C} \K$. 
By \eqref{eq:trop-cv-rat}, the set of rational points on the 
tropicalization of $\VV^{\le q}(X)$ is $\nu_X\big(\VV^{\le q}(X, \K)\big)$.
Thus, $\chi=\val\circ \rho=\nu_X(\rho)$ is a rational point on 
$\Trop(\VV^{\le q}(X))=\overline{\VV^{\le q}(X, \K)}$, and 
conversely, all rational points on 
$\Trop(\VV^{\le q}(X))$ are of the form $\nu_X(\rho)$, for 
some $\rho\in \VV^{\le q}(X, \K)$.

Finally, assume that  $\chi$ is also nonzero, so that it 
represents an (arbitrary) rational point in $S(\Trop( \VV^{\le q}(X))$. 
Then, by Theorem \ref{thm:val}, the additive character $\chi$ 
belongs to $\Sigma^q(X,\Z)^{\compl}$.  
By the above, though, the set of rational points is dense in 
$S(\Trop(  \VV^{\le q}(X)))$. Moreover, we also know 
from \S\ref{subsec:fgs cw} that 
$\Sigma^q(X,\Z)^{\compl}$ is a closed subset of $S(X)$.  Thus, 
$S(\Trop( \VV^{\le q}(X)))\subseteq \Sigma^q(X,\Z)^{\compl}$. 
\end{proof}

Since we are working over the field $\k=\C$, we also have at our disposal 
the exponential map, $\exp\colon \C\to \C^{\times}$, and the resulting exponential 
tangent cone construction, $\tau_1(W)$, for subvarieties $W\subset (\C^{\times})^n$. 
The next result recovers (in a much stronger form) Theorem \ref{thm:bns tau}. 

\begin{corollary}
\label{cor:sigma-trop-tau}
With notation as above, 
\begin{equation}
\label{eq:2-inclusions-spaces}
\Sigma^q(X,\Z) \subseteq S(\Trop( \VV^{\le q}(X)))^{\compl}
\subseteq S( \tau^{\R}_1(\VV^{\le q}(X)))^{\compl}. 
\end{equation}
\end{corollary}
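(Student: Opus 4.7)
The first inclusion in \eqref{eq:2-inclusions-spaces} is precisely the content of Theorem \ref{thm:bns-trop}, which we have just established. The plan is therefore to reduce the second inclusion to Proposition \ref{prop:tc trop}, combined with the compatibility of both $\tau_1^{\R}$ and $\Trop$ with finite unions.

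More concretely, by taking complements in the unit sphere $S(X)$, the second inclusion in \eqref{eq:2-inclusions-spaces} is equivalent to the containment of subsets of $H^1(X,\R)$,
\[
\tau^{\R}_1(\VV^{\le q}(X)) \subseteq \Trop(\VV^{\le q}(X)).
\]
To establish this, the first step is to observe that both operations commute with finite unions: for tropicalization, this is formula \eqref{eq:trop-union}, while for the exponential tangent cone (and hence also for $\tau_1^{\R}$), this is the property recorded right after Definition \ref{def:exp tcone}. Writing $\VV^{\le q}(X) = \bigcup_{i \le q} \VV^i(X)$ and applying Proposition \ref{prop:tc trop} to each $\VV^i(X)$, viewed inside the appropriate connected components of $H^1(X,\C^{\times})$, we get
\[
\tau^{\R}_1(\VV^{\le q}(X)) = \bigcup_{i \le q} \tau^{\R}_1(\VV^i(X)) \subseteq \bigcup_{i \le q} \Trop(\VV^i(X)) = \Trop(\VV^{\le q}(X)),
\]
which is what is needed.

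No genuine obstacle is anticipated: the corollary is essentially a formal consequence of the main theorem together with the pointwise containment $\tau_1^{\R}(W) \subseteq \Trop(W)$. The only subtle point worth flagging is the direction-reversal when passing from the inclusion of real varieties to the inclusion of their complements on the unit sphere, which is precisely what allows the new tropical bound to refine, rather than merely parallel, the earlier tangent-cone bound of Theorem \ref{thm:bns tau}.
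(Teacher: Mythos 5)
Your proposal is correct and follows the same route as the paper: the first inclusion is Theorem~\ref{thm:bns-trop}, the second is the containment $\tau^{\R}_1(W)\subseteq \Trop(W)$ from Proposition~\ref{prop:tc trop}. The only extraneous step is the decomposition of $\VV^{\le q}(X)$ into the union $\bigcup_{i\le q}\VV^i(X)$ before applying the proposition---since Proposition~\ref{prop:tc trop} holds for an arbitrary subvariety, it can be applied to $\VV^{\le q}(X)$ directly, which is what the paper does; the compatibility with finite unions is not needed here.
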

\begin{proof}
The first inclusion comes from Theorem \ref{thm:bns-trop}, while the second 
inclusion follows from Proposition \ref{prop:tc trop}.
\end{proof}  

In a very special situation (which we will encounter several times later on), 
the above theorem allows us to precisely identify the sets $\Sigma^q(X,\Z)$, 
even in cases when Theorem \ref{thm:bns tau} does not.

\begin{corollary}
\label{cor:sigmaq-X}
If $\VV^{\le q}(X)$ contains a connected component 
of $H^1(X,\C^{\times})$, then $\Sigma^q(X,\Z)=\myempty$.
\end{corollary}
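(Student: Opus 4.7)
The plan is to combine Theorem \ref{thm:bns-trop} with Lemma \ref{lemma:trop-all}, which together dispatch the corollary almost immediately. Since Theorem \ref{thm:bns-trop} gives the upper bound
\begin{equation*}
\Sigma^q(X,\Z) \subseteq S\big(\!\Trop( \VV^{\le q}(X))\big)^{\compl},
\end{equation*}
it suffices to show that, under the stated hypothesis, the set $S(\Trop(\VV^{\le q}(X)))$ coincides with the whole of $S(X)$.

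First I would invoke Lemma \ref{lemma:trop-all}, applied to the algebraic subvariety $W = \VV^{\le q}(X) \subset H^1(X,\C^{\times})$. By hypothesis, $W$ contains an entire connected component of $H^1(X,\C^{\times})$, so the lemma yields $\Trop(\VV^{\le q}(X)) = H^1(X,\R)$. Taking the intersection with the unit sphere gives $S(\Trop(\VV^{\le q}(X))) = S(X)$, and hence the complement $S(\Trop(\VV^{\le q}(X)))^{\compl}$ inside $S(X)$ is empty.

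Finally, plugging this into the inclusion from Theorem \ref{thm:bns-trop} forces $\Sigma^q(X,\Z) \subseteq \myempty$, and so $\Sigma^q(X,\Z) = \myempty$ as claimed. There is no real obstacle here: the work has all been done in establishing Theorem \ref{thm:bns-trop} and in the elementary computation of Lemma \ref{lemma:trop-all}; the corollary is a clean assembly of the two.
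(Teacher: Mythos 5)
Your proof is correct and follows exactly the paper's own argument: apply Lemma \ref{lemma:trop-all} to conclude $\Trop(\VV^{\le q}(X)) = H^1(X,\R)$, then feed this into the inclusion from Theorem \ref{thm:bns-trop} to force the complement, and hence $\Sigma^q(X,\Z)$, to be empty. Nothing to add.
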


\begin{proof}
By Lemma \ref{lemma:trop-all}, $\Trop (\VV^{\le q}(X))=H^1(X,\R)$. 
The claims now follows from Theorem \ref{thm:bns-trop}.
\end{proof}

\section{Upper bounds for the BNS invariant}
\label{sect:bns-trop-bound}

In most of the applications and examples described in the 
following sections, we will only deal with the original BNS sets 
$\Sigma^1(X)=\Sigma^1(\pi_1(X))$, which recall are equal to 
$-\Sigma^1(X,\Z)$.  For completeness, we restate our results 
from \S\ref{subsec:main} in this setting. 

\begin{corollary}
\label{cor:sigma1-X}
Let $X$ be a connected CW-complex with finite $1$-skeleton. Then
\begin{equation}
\label{eq:sigma1-X-bound}
\Sigma^1(X) \subseteq -S(\Trop (\VV^1(X)))^{\compl}.
\end{equation}
\end{corollary}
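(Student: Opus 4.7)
The plan is to derive this directly from the main theorem (Theorem \ref{thm:bns-trop}) specialized to $q = 1$, and then translate between the two sign conventions for Sigma-invariants.

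First I would apply Theorem \ref{thm:bns-trop} with $q = 1$ to obtain the inclusion
\begin{equation*}
\Sigma^1(X,\Z) \subseteq S\big(\!\Trop(\VV^{\le 1}(X))\big)^{\compl}.
\end{equation*}
The next step is to replace $\VV^{\le 1}(X)$ by $\VV^1(X)$ on the right-hand side. Since $\VV^0(X) = \{1\}$, we have $\VV^{\le 1}(X) = \VV^1(X) \cup \{1\}$; by formula \eqref{eq:trop-union} and the fact that $\Trop(\{1\}) = \{0\}$, this gives $\Trop(\VV^{\le 1}(X)) = \Trop(\VV^1(X)) \cup \{0\}$. Intersecting with the unit sphere $S(X)$ and using the convention $S(\{0\}) = \myempty$ noted in \S\ref{subsec:bnscv}, I conclude that $S(\Trop(\VV^{\le 1}(X))) = S(\Trop(\VV^1(X)))$, so their complements in $S(X)$ coincide.

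Finally, I would invoke the sign convention $\Sigma^1(X) = -\Sigma^1(X,\Z)$ recalled from \cite{BR} in \S\ref{subsec:sigma-groups}. Since the antipodal map is an involution of $S(X)$ that carries complements to complements, applying it to both sides of the inclusion from the previous paragraph yields
\begin{equation*}
\Sigma^1(X) = -\Sigma^1(X,\Z) \subseteq -S\big(\!\Trop(\VV^1(X))\big)^{\compl},
\end{equation*}
which is the desired bound. The edge case $b_1(X) = 0$ is automatic, since then $S(X) = \myempty$ and both sides are empty.

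There is no real obstacle here: the statement is essentially a repackaging of Theorem \ref{thm:bns-trop} in degree one, taking into account the antipodal convention. The only point deserving care is bookkeeping of the sign, which is why the theorem is proved for $\Sigma^1(X,\Z)$ (the homological convention used throughout the paper) and only recast in terms of $\Sigma^1(X) = \Sigma^1(\pi_1(X))$ at this stage.
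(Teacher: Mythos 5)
Your proposal is correct and follows exactly the intended derivation: specialize Theorem \ref{thm:bns-trop} to $q=1$, note that the $\VV^0(X)=\{1\}$ contribution tropicalizes to $\{0\}$ and so does not affect the intersection with the sphere, and then apply the sign convention $\Sigma^1(X)=-\Sigma^1(X,\Z)$; the paper treats the corollary as an immediate restatement and gives no explicit proof, and your argument supplies precisely the bookkeeping one would write out. The only (harmless) difference from the paper's implicit route is that you discard the origin tropically, whereas the paper has already observed in \S\ref{subsec:prop-cv} that $\VV^{\le 1}(X)=\VV^1(X)$ once $b_1(X)>0$; either way the conclusion is the same.
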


\begin{corollary}
\label{cor:bns-bounds-group}
Let $G$ be a finitely generated group. Then
\begin{equation}
\label{eq:two inclusions-G}
\Sigma^1(G) \subseteq -S(\Trop( \VV^{1}(G)))^{\compl}
\subseteq S( \tau^{\R}_1(\VV^{1}(G)))^{\compl}.
\end{equation}
\end{corollary}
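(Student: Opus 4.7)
The plan is to derive both inclusions from results already established in this section, with the only subtlety being the bookkeeping of the antipodal map and the sign convention $\Sigma^1(G)=-\Sigma^1(G,\Z)$.

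For the left-hand inclusion, I would first pick a connected CW-complex $X$ with finite $1$-skeleton and $\pi_1(X)\cong G$; such an $X$ exists because $G$ is finitely generated (e.g., a presentation $2$-complex when $G$ is finitely presented, or a wedge of circles indexed by a finite generating set with suitably chosen $2$-cells attached). As recalled in \S\ref{subsec:prop-cv} and \S\ref{subsec:sigma-groups}, the invariants $\Sigma^1$ and $\VV^1$ (and hence the tropicalization of the latter) depend only on the fundamental group, so $\Sigma^1(X)=\Sigma^1(G)$ and $\Trop(\VV^1(X))=\Trop(\VV^1(G))$. Corollary \ref{cor:sigma1-X} now delivers the first inclusion immediately.

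For the second inclusion, Proposition \ref{prop:tc trop} applied to $W=\VV^1(G)\subseteq H^1(G,\C^{\times})$ gives $\tau_1^{\R}(\VV^1(G))\subseteq \Trop(\VV^1(G))$. Intersecting with $S(G)$ and taking complements within $S(G)$ reverses the inclusion to
\begin{equation*}
S(\Trop(\VV^1(G)))^{\compl}\subseteq S(\tau_1^{\R}(\VV^1(G)))^{\compl}.
\end{equation*}
The only point that needs genuine attention is the leftover minus sign. By Lemma \ref{lem:tc-linear}, $\tau_1^{\R}(\VV^1(G))$ is a finite union of rationally defined linear subspaces of $H^1(G,\R)$ and is therefore invariant under $x\mapsto -x$. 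Consequently $S(\tau_1^{\R}(\VV^1(G)))$ and its complement in $S(G)$ are antipodal-symmetric, so $-S(\tau_1^{\R}(\VV^1(G)))^{\compl}=S(\tau_1^{\R}(\VV^1(G)))^{\compl}$.

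Applying the antipodal map to the displayed inclusion and using this symmetry yields $-S(\Trop(\VV^1(G)))^{\compl}\subseteq S(\tau_1^{\R}(\VV^1(G)))^{\compl}$, which chains with the first inclusion to finish the argument. Honestly there is no real obstacle: this corollary is just a repackaging of Corollary \ref{cor:sigma1-X} with the Trop-versus-exponential-tangent-cone comparison of Proposition \ref{prop:tc trop}, the only nuance being the symmetry of $\tau_1^{\R}$ that absorbs the antipodal sign on the right-hand term.
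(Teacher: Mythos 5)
Your proof is correct and follows essentially the route the paper intends: Corollary~\ref{cor:sigma1-X} gives the first inclusion, Proposition~\ref{prop:tc trop} gives the containment $\tau_1^{\R}(\VV^1(G))\subseteq \Trop(\VV^1(G))$, and the leftover antipodal sign is absorbed by the observation (via Lemma~\ref{lem:tc-linear}) that $\tau_1^{\R}(\VV^1(G))$ is a union of linear subspaces and hence $S(\tau_1^{\R}(\VV^1(G)))^{\compl}$ is antipodally symmetric. The paper states this corollary as an immediate restatement of Corollary~\ref{cor:sigma-trop-tau} under the convention $\Sigma^1(G)=-\Sigma^1(G,\Z)$ and leaves the sign bookkeeping implicit, so if anything you have supplied the one small step that the paper glosses over.
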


\begin{corollary}
\label{cor:sigma-empty-2}
If $\VV^1(G)$ contains a connected component of $\T_{G}$, then 
$\Sigma^1(G)=\myempty$.
\end{corollary}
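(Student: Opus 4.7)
The plan is to deduce this corollary directly from Corollary \ref{cor:bns-bounds-group} (or, equivalently, from Corollary \ref{cor:sigmaq-X} applied to the classifying space $K(G,1)$), the point being that the hypothesis forces the tropicalization to fill up the entire ambient vector space, leaving an empty complement on the sphere.

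First, I would invoke Lemma \ref{lemma:trop-all} with $X = K(G,1)$, so that $H^1(X,\C^{\times}) = \T_G$. The hypothesis that $\VV^1(G)$ contains a connected component of $\T_G$ is precisely the hypothesis of that lemma, and it yields
\[
\Trop(\VV^1(G)) = H^1(G,\R).
\]
Consequently, $S(\Trop(\VV^1(G))) = S(G)$, the entire character sphere, and so $S(\Trop(\VV^1(G)))^{\compl} = \myempty$.

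Next, I would plug this into the tropical upper bound from Corollary \ref{cor:bns-bounds-group}, namely $\Sigma^1(G) \subseteq -S(\Trop(\VV^1(G)))^{\compl}$. The right-hand side is the antipodal image of the empty set, which is still empty; hence $\Sigma^1(G) = \myempty$. (Alternatively, one can bypass the sign by applying Corollary \ref{cor:sigmaq-X} with $q=1$ to conclude $\Sigma^1(K(G,1),\Z)=\myempty$, and then use $\Sigma^1(G) = -\Sigma^1(G,\Z) = -\Sigma^1(K(G,1),\Z)$ as recorded in \S\ref{subsec:sigma-groups} and \S\ref{subsec:bnscv}.)

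There is essentially no obstacle: once Corollary \ref{cor:sigmaq-X} (equivalently, Theorem \ref{thm:bns-trop} combined with Lemma \ref{lemma:trop-all}) is in hand, the present statement is the group-theoretic specialization to $q=1$, with the sign flip absorbed by the antipodal symmetry of the empty set. The only thing worth being careful about is the sign convention $\Sigma^1(G) = -\Sigma^1(G,\Z)$ discussed in Remark \ref{rem:signs}, which is harmless here precisely because $-\myempty = \myempty$.
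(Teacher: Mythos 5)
Your proof is correct and follows exactly the route the paper intends: Corollary \ref{cor:sigma-empty-2} is the group-level specialization of Corollary \ref{cor:sigmaq-X} (equivalently, Lemma \ref{lemma:trop-all} fed into Corollary \ref{cor:bns-bounds-group}), with the sign flip $\Sigma^1(G)=-\Sigma^1(G,\Z)$ rendered harmless because the empty set is antipodally invariant. Nothing to add.
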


We shall see applications of the last corollary in 
Sections \ref{sect:orbifolds}--\ref{sect:seifert} on 
orbifold Riemann surfaces and Seifert manifolds.

Under some more restrictive hypothesis, it is possible to recast these 
tropical upper bounds in terms of simpler, more computable data. 
Given a finitely generated group $G$ with $b_1(G)>0$, we prove 
two such results: one in which the upper bound for $\Sigma^1(G)$ 
is given in terms of the tropicalization of the Alexander polynomial 
$\Delta_G\in \Z{H}$, where $H=G_{\ab}/\Tors(G_{\ab})$,  and the other in 
which the upper bound is given in purely homological terms, using a 
family of  projections onto suitably chosen quotients of $G$.
 
\begin{theorem}
\label{thm:sigma1-G}
Suppose $\Delta_G$ is symmetric and 
$I^p_{H}\cdot ( \Delta_{G} ) \subseteq E_1(A_G)$, for some $p\ge 0$. 
Then
\begin{equation}
\label{eq:sigma-facets}
\Sigma^1(G) \subseteq \bigcup_F S(F)\, ,
\end{equation} 
where $F$ runs through the open facets of $B_A$.
\end{theorem}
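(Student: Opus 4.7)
The plan is to combine the tropical upper bound in Corollary \ref{cor:sigma1-X} with a polytopal description of $\Trop(\VV^1(G))$ in terms of the Alexander norm ball $B_A$. First, I would convert the ideal-theoretic hypothesis into a geometric one: by definition $\Delta_G=\gcd(E_1(A_G))$, so $E_1(A_G)\subseteq(\Delta_G)$; combined with our hypothesis this gives the sandwich
$I_H^p\cdot(\Delta_G) \subseteq E_1(A_G)\subseteq (\Delta_G)$.
Passing to zero loci reverses inclusions, which (as in the proof of Proposition \ref{prop:zz1-pi}, only the $\subseteq$-direction of whose hypothesis is needed here) yields
$\VV^1(G)\cap \T_G^0 \subseteq V(\Delta_G)\cup\{1\}$.

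Next, I would tropicalize using \eqref{eq:trop-union} to obtain $\Trop(\VV^1(G)\cap \T_G^0)\subseteq \Trop(V(\Delta_G))\cup\{0\}$, and then invoke Proposition \ref{prop:trop-newt} to identify the right-hand side with $Y := \mathcal{F}(B_A)^{\codim > 0}$, the positive-codimension skeleton of the face fan of $B_A$. The proofs of parts (2)--(3) of that proposition rely only on the symmetry of $\Delta_G$ (through \eqref{eq:alex-ball}, \eqref{eq:polarity}, and \eqref{eq:normal-newt}) and not on the Fitting-ideal equality, so they apply in our weaker setting and also give $Y=-Y$.

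The main obstacle is extending the inclusion from $\VV^1(G)\cap \T_G^0$ to all of $\VV^1(G)$. When $G_\ab$ has torsion, $\VV^1(G)$ can have irreducible components of the form $\rho_0 W$, where $\rho_0\in \T_G\setminus \T_G^0$ and $W\subset \T_G^0$. Since every torsion character takes values in roots of unity---elements of valuation zero in $\K^\times$---the map $\nu_G$ ignores such twists: as in \eqref{eq:tropzt}, $\Trop(\rho_0 W)=\Trop(W)$. My plan is to show, by a careful refinement of the Fitting-ideal argument above, that each underlying $W$ still lies in $V(\Delta_G)\cup\{1\}$, so that its tropicalization is again contained in $Y$. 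Carrying out this last step rigorously for the translated components is the principal technical hurdle.

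Granting this, $\Trop(\VV^1(G))\subseteq Y$, and Corollary \ref{cor:sigma1-X} gives
$\Sigma^1(G) \subseteq -S(Y)^{\compl} = S(Y)^{\compl}$,
the equality using $Y=-Y$. To conclude, I would identify $S(Y)^{\compl}$ with $\bigcup_F S(F)$: radial projection provides a homeomorphism $S(G)\cong \partial B_A$, under which each cone of $\mathcal{F}(B_A)$ spanned by a face $F'$ of $B_A$ meets $S(G)$ in precisely $F'$. Since $Y$ is the union of the cones over faces of dimension at most $n-2$, $S(Y)$ corresponds to the $(n-2)$-skeleton of $\partial B_A$, whose complement is exactly the disjoint union of the open facets of $B_A$.
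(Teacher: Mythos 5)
Your argument has the inclusion running the wrong way, and this is not a small gap: it makes the concluding step invalid and, as a byproduct, leads you to spend effort on a technical hurdle that the correct argument never encounters.

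Corollary \ref{cor:sigma1-X} gives $\Sigma^1(G) \subseteq -S(\Trop(\VV^1(G)))^{\compl}$. This is a containment in a \emph{complement}, so it becomes \emph{weaker}, not stronger, if $\Trop(\VV^1(G))$ is replaced by anything smaller. Concretely, if $\Trop(\VV^1(G)) \subseteq Y$, then $S(\Trop(\VV^1(G)))^{\compl} \supseteq S(Y)^{\compl}$, and you cannot deduce $\Sigma^1(G) \subseteq -S(Y)^{\compl}$ from $\Sigma^1(G) \subseteq -S(\Trop(\VV^1(G)))^{\compl}$. To land inside $S(Y)^{\compl}$ you need the \emph{reverse} containment $Y \subseteq \Trop(\VV^1(G))$, so that $-S(\Trop(\VV^1(G)))^{\compl} \subseteq -S(Y)^{\compl} = S(Y)^{\compl}$ (the last step using $Y=-Y$). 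That reverse containment is exactly what you glossed over, and it comes from the direction of the sandwich you didn't use: the automatic inclusion $E_1(A_G) \subseteq (\Delta_G)$ gives $V(\Delta_G) \subseteq \VV^1(G)\cap\T_G^0$ away from $1$, hence $Y = \Trop(V(\Delta_G))\cup\{0\} \subseteq \Trop(\VV^1(G)\cap\T_G^0)$. Note, in particular, that this is the inclusion that uses the ideal-theoretic hypothesis meaningfully---or rather, the always-true half of your sandwich; the one-sided containment $\Trop(\VV^1(G)\cap\T_G^0) \subseteq Y$ that you prove in detail is the piece that is \emph{not} needed for the upper bound.

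This also dissolves your ``principal technical hurdle.'' The paper never needs to extend anything from $\T_G^0$ to the translated components: since $\VV^1(G)\cap\T_G^0 \subseteq \VV^1(G)$ and $\Trop$ is monotone, one has $\Trop(\VV^1(G)\cap\T_G^0) \subseteq \Trop(\VV^1(G))$, hence
\begin{equation*}
\Sigma^1(G) \subseteq -S(\Trop(\VV^1(G)))^{\compl} \subseteq -S(\Trop(\VV^1(G)\cap\T_G^0))^{\compl},
\end{equation*}
and Proposition \ref{prop:trop-newt} identifies $\Trop(\VV^1(G)\cap\T_G^0)$ with the positive-codimension skeleton of $\mathcal{F}(B_A)$, which is symmetric, so the right-hand side equals $\bigcup_F S(F)$. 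Passing to the smaller variety $\VV^1(G)\cap\T_G^0$ only \emph{relaxes} the bound, which is harmless; the translated components of $\VV^1(G)$ are simply irrelevant here. Your instinct that something about them might matter is the symptom of having the inclusion backwards.
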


\begin{proof}
By Corollary \ref{cor:bns-bounds-group}, 
$\Sigma^1(G)$ is contained in $-S(\Trop( \VV^{1}(G)))^{\compl}$;  
in turn, this set is contained in $-S(\Trop( \VV^{1}(G)\cap \T_G^0))^{\compl}$.  

By Proposition \ref{prop:trop-newt}, the set 
$\Trop\big(\VV^1(G)\cap \T_G^0\big)$ is symmetric  and 
coincides with the positive-codimension skeleton of $\mathcal{F}(B_A)$, 
the face fan of the unit ball in the Alexander norm.  Thus, its complement 
in $H^1(G,\R)$ is the union of the open cones on the facets of $B_A$. 
The claim follows.
\end{proof}

We shall give applications of this theorem in Sections \ref{sect:one-rel}--\ref{sect:3mfd} 
on $1$-relator groups and $3$-manifold groups.

\begin{theorem}
\label{thm:sigma-pencils}
Let $G$ be a finitely generated group, and let $f_\alpha\colon G\to G_\alpha$ 
be a finite collection of epimorphisms.  Suppose each characteristic variety 
$\VV^1(G_\alpha)$ contains a component of $\T_{G_\alpha}$.  Then 
\begin{equation}
\label{eq:sigma-pen}
\Sigma^1(G) \subseteq \bigg(\bigcup_{\alpha} 
S\big(f^*_{\alpha}(H^1(G_\alpha,\R))\big)\bigg)^{\compl}.
\end{equation}
\end{theorem}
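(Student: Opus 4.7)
The plan is to assemble the conclusion from two earlier ingredients that together do essentially all the work: the ``empty $\Sigma^1$'' criterion of Corollary \ref{cor:sigma-empty-2}, applied to each target group $G_\alpha$, and the naturality property of the BNS complement under surjections, stated in Proposition \ref{prop:sigma1-nat}, applied to each of the given epimorphisms $f_\alpha\colon G\surj G_\alpha$. Since the desired inclusion is equivalent to showing that
\[
\bigcup_{\alpha} S\bigl(f^*_{\alpha}(H^1(G_\alpha,\R))\bigr) \subseteq \Sigma^1(G)^{\compl},
\]
it suffices to prove the corresponding inclusion for a single index $\alpha$ and then take the union.

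First I would fix $\alpha$ and observe that, by hypothesis, the characteristic variety $\VV^1(G_\alpha)$ contains a connected component of the character group $\T_{G_\alpha}$. Corollary \ref{cor:sigma-empty-2} then applies and yields $\Sigma^1(G_\alpha)=\myempty$, so that $\Sigma^1(G_\alpha)^{\compl}$ coincides with the entire character sphere $S(G_\alpha)$. Next I would invoke Proposition \ref{prop:sigma1-nat} for the surjection $f_\alpha$: the induced linear map $f_\alpha^*\colon H^1(G_\alpha,\R)\inj H^1(G,\R)$ is injective (because $f_\alpha$ is surjective), its restriction to spheres embeds $S(G_\alpha)$ into $S(G)$, and this embedding sends $\Sigma^1(G_\alpha)^{\compl}$ into $\Sigma^1(G)^{\compl}$. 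Combining these two facts gives
\[
f_\alpha^*\bigl(S(G_\alpha)\bigr) \;\subseteq\; \Sigma^1(G)^{\compl}.
\]

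Finally I would identify the left-hand side with $S\bigl(f^*_{\alpha}(H^1(G_\alpha,\R))\bigr)$: since $f_\alpha^*$ is an $\R$-linear injection, it carries the unit sphere of $H^1(G_\alpha,\R)$ bijectively onto the set of rays in its image subspace, which by definition is $S\bigl(f^*_{\alpha}(H^1(G_\alpha,\R))\bigr)$. Taking the union over the finitely many $\alpha$ and passing to complements then yields the stated bound \eqref{eq:sigma-pen}. There is no genuine obstacle in this argument; the one point that deserves care is the bookkeeping of signs and the identification between the character sphere of a quotient and the unit sphere of the corresponding linear subspace in $H^1(G,\R)$, but this is already encoded in the conventions of Section \ref{sec:bnsr} and in the statement of Proposition \ref{prop:sigma1-nat}.
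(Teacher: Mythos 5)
Your argument is correct, and in fact it is precisely the alternative route that the paper itself sketches in Remark~\ref{rem:alt-proof}. The paper's own proof stays within the tropical framework throughout: it uses the naturality of tropicalized characteristic varieties (Proposition~\ref{prop:trop-v1-nat}) to show that $\bigcup_\alpha f_\alpha^*\big(\Trop(\VV^1(G_\alpha))\big)\subseteq\Trop(\VV^1(G))$, then invokes Lemma~\ref{lemma:trop-all} to see that each $\Trop(\VV^1(G_\alpha))$ is all of $H^1(G_\alpha,\R)$, and finally applies the tropical upper bound of Corollary~\ref{cor:bns-bounds-group}, dispensing with the resulting antipodal sign at the end by the homogeneity of the linear subspaces $f_\alpha^*(H^1(G_\alpha,\R))$. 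Your route instead applies the tropical machinery only to the quotients $G_\alpha$, via Corollary~\ref{cor:sigma-empty-2}, to get $\Sigma^1(G_\alpha)=\myempty$, and then transports this back to $G$ using the classical naturality of BNS complements under surjections (Proposition~\ref{prop:sigma1-nat}, due to Bieri--Neumann--Strebel). What your version buys is that the sign bookkeeping disappears entirely: Proposition~\ref{prop:sigma1-nat} is stated directly for $\Sigma^1$, so there is no conversion between $\Sigma^1(G)$ and $-\Sigma^1(G,\Z)$ to manage. What the paper's version buys is that the inclusion $\bigcup_\alpha f_\alpha^*\big(\Trop(\VV^1(G_\alpha))\big)\subseteq\Trop(\VV^1(G))$ is made explicit, which is of independent interest and parallels the structure of the Green--Lazarsfeld decomposition used later in Sections~\ref{sect:kahler}--\ref{sect:arrs}. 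Both proofs ultimately rest on the same underlying ingredient (Theorem~\ref{thm:bns-trop} feeding into Lemma~\ref{lemma:trop-all}), just packaged at different stages.
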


\begin{proof}
Fix an index $\alpha$, and consider the induced homomorphism
$f^*_{\alpha}\colon H^1(G_{\alpha},\R)\to H^1(G,\R)$.  Since 
$f_{\alpha}$ is assumed to be surjective, $f^*_{\alpha}$ is 
an $\R$-linear injection, and, by Proposition \ref{prop:trop-v1-nat}, 
the map $f^*_{\alpha}$  takes $\Trop(\VV^1(G_\alpha))$ to $\Trop(\VV^1(G))$. 
Therefore, 
\begin{equation}
\label{eq:trop-cup}
\medmath{\bigcup_{\alpha}}\, f_{\alpha}^*\big(\Trop(\VV^1(G_\alpha))\big)\subseteq 
\Trop( \VV^{1}(G)) \, .
\end{equation}

By our other assumption, we have that $\VV^1(G_\alpha)\supseteq \rho \T^0_{G_{\alpha}}$, 
for some (torsion) character $\rho_\alpha\in \T_{G_\alpha}$. Thus, by Lemma \ref{lemma:trop-all}, 
$\Trop\big(\VV^1(G_\alpha)\big)=H^1(G_{\alpha},\R)$. Therefore,
\begin{align}
\label{eq:sig-siga}
\Sigma^1(G) 
&\subseteq -S(\Trop( \VV^{1}(G)))^{\compl} 
&&\text{by Corollary \ref{cor:bns-bounds-group}}
\notag\\
&\subseteq -\Big(\medmath{\bigcup_{\alpha}}\, 
S\big(f_{\alpha}^*(\Trop(\VV^1(G_\alpha))\big)\Big)^{\compl}
&&\text{by \eqref{eq:trop-cup}}\\
&= \Big(\medmath{\bigcup_{\alpha}}\,
S\big(f_{\alpha}^*(H^1(G_\alpha,\R))\big)\Big)^{\compl}, \notag
\end{align}
where at the last step we used the aforementioned equality, together with the homogeneity 
of the linear subspace $f_{\alpha}^*(H^1(G_\alpha,\R))$ to dispense with the sign. 
\end{proof}

\begin{remark}
\label{rem:alt-proof}
It is possible to give an alternative argument, relying on 
Proposition \ref{prop:sigma1-nat} instead of Proposition \ref{prop:trop-v1-nat}.  
In this approach, one starts by noting that each map $f^*_\alpha\colon S(G_\alpha) \inj S(G)$ 
is an embedding taking $\Sigma^1(G_{\alpha})^{\compl}$ to $\Sigma^1(G)^{\compl}$, 
and then proceeds in like manner.
\end{remark}

We shall provide applications of this theorem in 
Sections \ref{sect:kahler}--\ref{sect:arrs} on K\"{a}hler manifolds and 
hyperplane arrangements.

\section{One-relator groups}
\label{sect:one-rel} 

In \cite{Br}, Brown gave an algorithm for computing the 
BNS invariant of a $1$-relator group $G$  (see \cite{BR, FT20} 
for other approaches). If  $G$ has at least $3$ generators, then 
$\Sigma^1(G)=\myempty$, so the interesting case is that of a 
$2$-generator, $1$-relator group, $G=\langle x_1,x_2\mid r\rangle$.  

First suppose $b_1(G)=1$. 
Let $\chi\colon G\to \R$ be a nonzero homomorphism, 
and assume without loss of generality that $\chi(x_1)>0$.  
Then one determines whether $[\chi]$ belongs to $\Sigma^1(G)$ 
according to whether the ``leading term" of $\partial_2(r)$ in 
the $\chi$-direction is of the form $\pm g\in \Z{G}$. 

\begin{example}
\label{ex:z*z2}
Let $G=\Z*\Z_2=\langle x_1,x_2\mid x_2^2\rangle$. 
Then  $S(G)=\{\pm \chi\}$, where $\chi(x_1)=1$ 
and $\chi(x_2)=0$. The Fox derivative $\partial_2(r)=1+x_2$ 
has leading term $1+x_2$ in both directions; thus, $\Sigma^1(G)=\emptyset$. 
On the other hand, $\T_G=\C^{\times}\times \{\pm 1\}$ and 
$\VV^1(G)=\{1\}\cup \C^{\times} \times \{-1\}$.  Hence, 
$\Sigma^1(G)=S(\Trop (\VV^1(G)))^{\compl}$, although 
$S(\tau^{\R}_1(\VV^1(G)))^{\compl}=\{\pm \chi\}$.
\end{example}

\begin{example}
\label{ex:bs12}
Let $G=\langle x_1,x_2\mid x_1^{\,}x_2x_1^{-1}x_2^{-2}\rangle$ 
be the Baumslag--Solitar group $\operatorname{BS}_{1,2}$.
Then  $S(G)=\{\pm \chi\}$, where $\chi(x_1)=1$ 
and $\chi(x_2)=0$. The Fox derivative $\partial_2(r)=x_1-x_2-1$ 
has leading term $x_1$ in the $\chi$-direction, and $-(1+x_2)$ in the 
$-\chi$-direction; thus, $\Sigma^1(G)=\{\chi\}$. On the other hand, 
$\T_G=\C^{\times}$ and $\VV^1(G)=\{1,2\}$; 
thus, $\Trop (\VV^1(G))=\tau^{\R}_1(\VV^1(G))=\{0\}$, and so 
inclusion \eqref{eq:sigma1 bound} is strict in this case.
\end{example}

Now suppose $b_1(G)=2$.  Then, in fact, $G_{\ab}\cong \Z^2$, 
and without loss of essential generality, we may assume 
that $r$ is a nontrivial, cyclically reduced word in the 
commutator subgroup of the free group on generators 
$x_1$ and $x_2$. Let $P$ be the boundary 
of the convex hull of the walk traced out by the relator $r$ in the 
$x_1$--$x_2$ plane $\R^2$; vertices of the polygon $P$ 
traversed only once are called simple, while horizontal or 
vertical edges are called special if they contain precisely 
two simple vertices. Then  $\Sigma^1(G)$ is a finite union of open 
arcs on the unit circle $S^1$: for each simple vertex $v$ or 
special edge $e$ of $P$, the corresponding arc runs from 
the normal vector to the edge preceding $v$ or $e$ to the 
one following it (when proceeding in a counterclockwise direction). 

On the other hand, the Alexander module $A_G$ is presented by 
the matrix $\big( \partial_1(r)^{\ab}\ \partial_2(r)^{\ab}\big)$, while 
the Alexander polynomial $\Delta_G$ is the gcd of the 
entries of this matrix; hence, 
$I_{G_{\ab}}\cdot ( \Delta_{G} ) = E_1(A_G)$. 
By Proposition \ref{prop:zz1-pi}, we have $\VV^1(G)=V(\Delta_G)\cup \{1\}$, 
and so the tropical upper bound from \eqref{eq:two inclusions-G} 
takes the form 
\begin{equation}
\label{eq:sigma1 bound}
\Sigma^1(G) \subseteq -S(\Trop(V(\Delta_G)))^{\compl},
\end{equation}
where $\Trop(V(\Delta_G))$ is the union of the rays in 
$H^1(G,\R)=\R^2$ starting at $0$ and inner 
normal to the edges of the Newton polygon of $\Delta_G$.  
On the other hand, $\tau^{\R}_1(\VV^1(G))$ is a union of lines 
through $0$ with rational slopes, and equals $\{0\}$ if $\Delta_G(1)\ne 0$. 
We illustrate the way this works in three examples 
(corresponding to the first three pictures from Figure \ref{fig:bns}).

\begin{figure}
\centering
\begin{tikzpicture}[baseline=(current bounding box.center),scale=0.47]  
\tikzset{mypoints/.style={fill=white,draw=black,thick}}
\def\ptsize{2.5pt}
\clip (0,0) circle (3);
\draw[name path=circle,red,ultra thick] (0,0) circle (2);
\draw[ style=thick, color=blue ] (-3,0) -- (3,0);
\fill[mypoints] (-2,0) circle (\ptsize);
\fill[mypoints] (2,0) circle (\ptsize);
\end{tikzpicture}
\hspace{1pc}
\begin{tikzpicture}[baseline=(current bounding box.center),scale=0.47]  
\tikzset{mypoints/.style={fill=white,draw=black,thick}}
\def\ptsize{2.5pt}
\clip (0,0) circle (3);
\draw[name path=circle,red,ultra thick] (0,0) circle (2);
\draw[ style=thick, color=blue ] (0,-3) -- (0,0);
\draw[ style=thick, color=blue ] (-3,0) -- (0,0);
\draw[ style=thick, color=blue ] (0,0) -- ({sqrt(4.5)},{sqrt(4.5)});
\fill[mypoints] (0,-2) circle (\ptsize);
\fill[mypoints] (-2,0) circle (\ptsize);
\fill[mypoints] ({sqrt(2)},{sqrt(2)}) circle (\ptsize);
\end{tikzpicture}
\hspace{0.2pc}
\begin{tikzpicture}[baseline=(current bounding box.center),scale=0.47]  
\tikzset{mypoints/.style={fill=white,draw=black,thick}}
\def\ptsize{2.5pt}
\clip (0,0) circle (3);
\draw[ ] (0,0) circle (2);
\draw[red, ultra thick] (0,0)+(180:2) arc (180:405:2); 
\draw[ style=thick, color=blue ] (0,-3) -- (0,3);
\fill[mypoints] (0,2) circle (\ptsize);
\fill[mypoints] (0,-2) circle (\ptsize);
\fill[mypoints] (-2,0) circle (\ptsize);
\fill[mypoints] ({sqrt(2)},{sqrt(2)}) circle (\ptsize);
\end{tikzpicture}
\hspace{0.2pc}
\begin{tikzpicture}[baseline=(current bounding box.center),scale=0.47]  
\tikzset{mypoints/.style={fill=white,draw=black,thick}}
\def\ptsize{2.5pt}
\clip (0,0) circle (3);
\draw[ ] (0,0) circle (2);
\draw[red, ultra thick] (0,0)+(90:2) arc (90:135:2); 
\draw[red, ultra thick] (0,0)+(270:2) arc (270:315:2);
\draw[ style=thick, color=blue ] (0,-3) -- (0,3);
\draw[ style=thick, color=blue ] ({-sqrt(4.5)},{sqrt(4.5)}) -- ({sqrt(4.5)},{-sqrt(4.5)});
\fill[mypoints] (0,-2) circle (\ptsize);
\fill[mypoints] (0,2) circle (\ptsize);
\fill[mypoints] ({-sqrt(2)},{sqrt(2)}) circle (\ptsize);
\fill[mypoints] ({sqrt(2)},-{sqrt(2)}) circle (\ptsize);
\end{tikzpicture}
\caption{{\small BNS invariants $\Sigma^1(G)$ (in red) and tropical curves 
$-\Trop(\VV^1(G))$ (in blue) for the one-relator groups $G$ from Examples \ref{ex:q-torus}, 
\ref{ex:deleq}, \ref{ex:brown}, and \ref{ex:dunfield}}} 
\label{fig:bns} 
\end{figure}
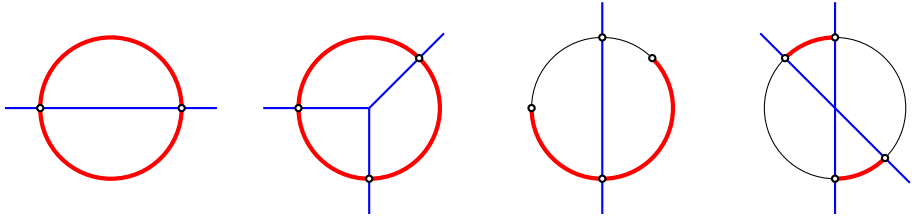

\begin{example}
\label{ex:q-torus}
Let 
$G=\langle x_1,x_2\mid x_1x_2^2x_1^{-1}x_2^{-2}\rangle$. Then  
$\Delta_G= t_2+1$ and 
$\Sigma^1(G)=S^1 \setminus \{(\pm 1,0)\}$.
\end{example}
  
\begin{example}
\label{ex:deleq}
Let $G=\langle x_1,x_2 \mid x_2^2 (x_1 x_2^{-1})^2 x_1^{-2}\rangle$. 
Then $\Delta_G=t_1+t_2+1$ and 
$\Sigma^1(G)=S^1 \setminus \{(-1,0), (0,-1), (\sfrac{1}{\sqrt{2}},\sfrac{1}{\sqrt{2}})\}$.  
\end{example}

In both of the above examples, 
$\Sigma^1(G)$ coincides with $-S(\Trop (\VV^1(G)))^{\compl}$, 
but is properly contained in 
$S(\tau^{\R}_1(\VV^1(G)))^{\compl}=S^1$.

\begin{example}
\label{ex:brown}
Let $G=\langle x_1,x_2 \mid x_1^{-1} x_2^{-1} x_1 x_2^2 x_1^{-1} x_2^{-1}
x_1^2 x_2^{-1} x_1^{-1} x_2 x_1^{-1} x_2 x_1 x_2^{-1}\rangle$.  
As an application of his algorithm,
Brown showed that $\Sigma^1(G)$ consists of two open arcs on the unit circle, 
joining the points $(-1,0)$  to $(0,-1)$ and $ (0,-1)$ to 
$(\sfrac{1}{\sqrt{2}},\sfrac{1}{\sqrt{2}})$, 
respectively. On the other hand, $\Delta_G=t_1-1$ and so 
$S(\Trop (\VV^1(G)))=S(\tau_1^{\R}(\VV^1(G)))$ 
consists of the whole circle with the points $(0,\pm 1)$ removed. Thus, 
 inclusion \eqref{eq:sigma1 bound} is strict in this case.
\end{example}

\section{Compact 3-manifolds}
\label{sect:3mfd}

In this section, $M$ will denote a compact, connected $3$-manifold 
with $b_1(M)>0$.  We say that a nontrivial cohomology class
$\phi\in H^1(M;\Z) = \Hom(\pi_1(M),\Z)$ is a fibered class if there exists
a fibration $p\colon M\to S^1$ such that the induced map
$p_*\colon \pi_1(M)\to \pi_1(S^1)=\Z$ coincides with $\phi$. 

The \emph{Thurston norm}\/ $\| \phi \|_T$ of a class 
$\phi\in H^1(M;\Z)$ is defined as the infimum of $-\chi(S_0)$, 
where $S$ runs though all the properly embedded, oriented 
surfaces in $M$  dual to $\phi$, and $S_0$ denotes 
the result of discarding all components of $S$ which 
are disks or spheres. In \cite{Th}, Thurston  proved that 
$\|-\|_T$ defines a seminorm on $H^1(M;\Z)$, which can 
be extended to a continuous seminorm on $H^1(M;\R)$.

The unit norm ball, $B_T=\{\phi\in H^1(M;\R) \mid \|\phi \|_T\leq 1\}$,
is a rational polyhedron with finitely many sides and which is symmetric 
in the origin. Moreover, there are facets of $B_T$, called the {\em fibered faces}\/ 
(coming in antipodal pairs), so that a class $\phi \in H^1(M;\Z)$ fibers if and 
only if it lies in the cone over the interior of a fibered face.
In \cite[Thm.~E]{BNS}, Bieri, Neumann, and Strebel showed that 
the BNS invariant of $G=\pi_1(M)$ is the projection onto $S(G)$ of 
the open fibered faces of the Thurston norm ball $B_T$; in particular, 
$\Sigma^1(G)=-\Sigma^1(G)$. 

Under some mild assumptions, McMullen \cite{McM} established an 
inequality between the Alexander and Thurston norms of a $3$-manifold 
$M$. When combined with the above theorem from \cite{BNS}, this 
inequality leads to an upper bound for the BNS invariant 
of $G$ in terms of the unit ball in the Alexander norm, see \cite{Dun}. 
We show next how to recover this upper bound from our tropical bound; 
for the sake of brevity, we restrict to the case $b_1(M)\ge 2$.  For 
more on the characteristic varieties of $3$-manifolds, as well as 
their (exponential) tangent cones, we refer to \cite{Su-tc3d}.

\begin{proposition}
\label{prop:sigma-3m}
Let $M$ be a compact, connected, orientable, $3$-manifold 
with empty or toroidal boundary. Set $G=\pi_1(M)$ and 
assume $b_1(M)\ge 2$. Then
\begin{enumerate}
\item \label{th1}
$\Trop\big(\VV^1(G)\cap \T_G^0\big)$ 
coincides with the positive-codimension skeleton of $\mathcal{F}(B_A)$, 
the face fan of the unit ball in the Alexander norm.
\item  \label{th2}
$\Sigma^1(G) $ is contained in the union of the open cones on 
the facets of $B_A$.
\end{enumerate}
\end{proposition}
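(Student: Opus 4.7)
The plan is to deduce both assertions from general results already established in Sections \ref{sect:trop-cv} and \ref{sect:bns-trop-bound}: part (1) will follow from Proposition \ref{prop:trop-newt}(\ref{lt3}), and part (2) is a direct application of Theorem \ref{thm:sigma1-G}. To invoke either, it suffices to verify two hypotheses on $G = \pi_1(M)$: that the multivariable Alexander polynomial $\Delta_G$ is symmetric (invariant up to units of $\Z H$ under the involution $g \mapsto g^{-1}$), and that $I_H^p \cdot (\Delta_G) = E_1(A_G)$ for some $p \ge 0$, where $H = G_{\ab}/\Tors(G_{\ab})$.

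The first hypothesis is a classical consequence of Poincar\'e--Lefschetz duality applied to the maximal torsion-free abelian cover $q\colon M^H \to M$. Since $M$ is compact, orientable, of dimension $3$, with empty or toroidal boundary, this duality pairs the Alexander module $A_G = H_1(M^H, q^{-1}(x_0), \Z)$ with itself (after twisting by the involution $g \mapsto g^{-1}$ on the coefficient ring $\Z H$); passing to first elementary ideals yields $(\Delta_G) = (\overline{\Delta_G})$ in $\Z H$, which is the desired symmetry --- this is the same symmetry used by McMullen in \cite{McM}.

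The second hypothesis is recorded in the discussion immediately following Proposition \ref{prop:zz1-pi}, which lists compact $3$-manifold groups among the classes for which $I_H^p \cdot (\Delta_G) = E_1(A_G)$ holds for some $p \ge 0$; when $b_1(M) \ge 2$, a handle decomposition of $M$ gives a presentation of $G$ whose Fox Jacobian descends, modulo $\Tors(G_{\ab})$, to a near-square presentation matrix for $A_G$ over $\Z H$, and one in fact has $E_1(A_G) = (\Delta_G)$ (i.e., $p = 0$). With both hypotheses in hand, part (1) follows from Proposition \ref{prop:trop-newt}(\ref{lt3}), and part (2) follows from Theorem \ref{thm:sigma1-G}, on noting that the intersection with the unit sphere $S(G)$ of the open cone on an open facet $F$ of $B_A$ is precisely $S(F)$.

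The main obstacle is not really a technical one: the substantive work is already contained in the abstract tropical bound of Section \ref{sect:sig-trop-char} and its refinement to the Alexander norm in Section \ref{sect:bns-trop-bound}. What remains is the small bookkeeping observation that $3$-manifold topology --- specifically, Poincar\'e duality for $M^H$ together with the Fox-theoretic presentation of $A_G$ --- supplies both required hypotheses essentially for free.
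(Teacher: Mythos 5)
Your overall strategy is exactly the one the paper uses: verify that $\Delta_G$ is symmetric and that $I_H^p\cdot(\Delta_G)=E_1(A_G)$ for some $p\ge 0$, then apply Proposition~\ref{prop:trop-newt}\eqref{lt3} for part~(1) and Theorem~\ref{thm:sigma1-G} for part~(2). The symmetry claim via Poincar\'e--Lefschetz duality on the maximal free-abelian cover is the content of the Milnor--Turaev theorem that the paper cites, and the reduction to the two cited general results is the right bookkeeping.

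However, your justification of the second hypothesis contains a genuine factual error: you assert that a handle decomposition gives $E_1(A_G)=(\Delta_G)$, i.e., $p=0$. This is false for $3$-manifold groups. Since $\Delta_G$ is defined as the greatest common divisor of the generators of $E_1(A_G)$, one always has $E_1(A_G)\subseteq(\Delta_G)$, but the reverse inclusion fails: the correct statement, due to McMullen (see also Eisenbud--Neumann and Turaev), is that $I_H^p\cdot(\Delta_G)=E_1(A_G)$ with $p=1$ when $\partial M=\myempty$ and $p=2$ when $\partial M$ is nonempty (toroidal). Your ``near-square presentation matrix'' heuristic does not establish the equality; for a group of deficiency $1$ with $b_1\ge 2$, the Fox Jacobian argument yields precisely the $I_H$ factor, not its absence. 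Fortunately, this error does not poison the conclusion, because Proposition~\ref{prop:trop-newt} and Theorem~\ref{thm:sigma1-G} only require the identity for \emph{some} $p\ge 0$; but as written, the intermediate claim ``$p=0$'' is wrong and should be replaced by the correct citation to McMullen's result, as the paper does.
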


\begin{proof}
Work of Milnor \cite{Mi62} and Turaev \cite{Tu75, Tu} shows that 
the Alexander polynomial of a $3$-manifold is symmetric (up to units).  
Moreover, as shown by McMullen \cite{McM} 
(see also \cite{EN,Tu}), we have that $I^p_H\cdot ( \Delta_{G} ) = E_1(A_G)$, 
where $H=G_{\ab}/\Tors$, and $p=1$ if $\partial M=\myempty$ 
and $p=2$, otherwise. Thus, claim \eqref{th1} follows from 
Proposition \ref{prop:trop-newt}, while claim  \eqref{th2} follows 
from Theorem \ref{thm:sigma1-G}.
\end{proof}

\begin{remark}
\label{rem:mcm}
Here is a sketch of the original proof of claim \eqref{th2}. 
As shown by McMullen in \cite{McM}, $\|\phi\|_A\le \|\phi\|_T$ 
for all $\phi\in H^1(M,\R)$, with equality if $\phi\in H^1(M,\Z)$ 
is a fibered class;  thus $B_T\subseteq B_A$, and 
each fibered face of $B_T$ is included in a face of $B_A$. 
Since $\Sigma^1(G)$ 
is the projection onto $S(G)$ of the fibered faces of $B_T$,  
the claim follows.
\end{remark}

\begin{remark}
\label{rem:poin3d}
Let $G$ be a $3$-dimensional Poincar\'{e} duality group such 
that $G$ admits a finite $K(G,1)$ and $\Z{G}$ embeds in a 
division algebra.  Under these hypotheses, 
Kielak shows in \cite[Thm.~5.32]{Ki19} 
that $\Sigma^1(G)$ is the projection onto $S(G)$ of the  
open cones on the marked faces of a certain polytope
in $H^1(G,\R)$. 
It would be interesting to know whether an analogue of 
Proposition \ref{prop:sigma-3m} still holds, 
with $B_T$ replaced by Kielak's marked polytope.
\end{remark} 

The next example shows that the tropical bound from Corollary \ref{cor:sigma1-X} 
(or Proposition \ref{prop:sigma-3m}) gives more information than the exponential 
tangent cone bound from Theorem \ref{thm:bns tau}.

\begin{example}
\label{ex:3mfd}
Let $M$ be a closed, orientable $3$-manifold with $H_1(M,\Z)=\Z^2$ 
and $\Delta_M=(t_1+t_2)(t_1t_2+1)-4t_1t_2$ (such a manifold exists 
by  \cite[VII.5.3]{Tu}).  It is readily seen that $\tau_{1}(\VV^1_1(M))=\{0\}$, 
yet $\Trop(V(\Delta_M))$ consists of the two diagonals in the plane.  Thus, 
$\Sigma^1(M)$ is contained in the complement in $S^1$ of the four points 
$(\pm \sfrac{1}{\sqrt{2}},\pm \sfrac{1}{\sqrt{2}})$ and 
$(\pm \sfrac{1}{\sqrt{2}},\mp \sfrac{1}{\sqrt{2}})$.
\end{example}

We conclude this section with a couple of examples from link theory.

\begin{example}
\label{ex:Hopf link}
Let $L$ be the $n$-component Hopf link, consisting of $n\ge 2$ fibers 
of the Hopf fibration $S^3\to S^2$, and let $G$ be the link group. 
Then $\Delta_G=(t_1\cdots t_n - 1)^{n-2}$, 
and a quick computation shows that $S(\Trop(\VV^1(G)))=S(\tau^\R_1(\VV^1(G)))$ 
is the great sphere cut out by the hyperplane $\sum x_i=0$, while 
$\Sigma^1(G)=S^{n-1}\setminus S^{n-2}$ is the complement of 
this great sphere. 
\end{example}

As shown by Dunfield \cite{Dun}, though, the fibered faces of $B_T$ may 
be strictly included in the corresponding faces of $B_A$.

\begin{example}
\label{ex:dunfield} 
Let $L$ be the $2$-component studied in \cite[\S 6]{Dun}. 
The link group $G$ is a $2$-generator, $1$-relator group with 
$\Delta_G=(t_1-1)(t_1t_2-1)$.  Thus, $S(\Trop(\VV^1(G)))=S(\tau^\R_1(\VV^1(G)))$ 
consists of the four points $(\pm 1,0)$ and 
$(\pm \sfrac{1}{\sqrt{2}},\mp \sfrac{1}{\sqrt{2}})$. On the other hand, 
$\Sigma^1(G)$ consists of two open arcs joining those two pairs of points 
(see the last picture in Figure~\ref{fig:bns}).   
\end{example} 

\section{Orbifold Riemann surfaces}
\label{sect:orbifolds} 

Let $\Sigma_g$ be a Riemann surface of genus $g\ge 1$ and let 
$k\ge 0$ be an integer. If $k>0$, fix points $q_1,\dots ,q_k$ in $\Sigma_g$, 
assign to these points an integer weight vector 
$\mathbf{m}=(m_1,\dots, m_k)$ with $m_i\ge 2$, and set $\abs{\mathbf{m}}=k$.  
The orbifold Euler characteristic of the surface with marked points is defined as 
$\chi^{\orb}(\Sigma_g, \mathbf{m})=2-2g-\sum_{i=1}^{k} (1-1/m_i)$, while 
the orbifold fundamental group $\Gamma=\pi_1^{\orb}(\Sigma_g, \mathbf{m})$ 
has presentation 
\begin{equation}
\label{eq:orbipi1}
\Gamma=
\left\langle 
\begin{array}{c}
x_1,\dots, x_g, y_1,\dots , y_g, \\[2pt]
z_1, \dots ,z_k 
\end{array}\left|
\begin{array}{c}
[x_1,y_1]\cdots [x_g,y_g] z_1\cdots z_k=1, \\[2pt]
z_1^{m_1}=\cdots =z_{k}^{m_k}=1
\end{array}
\right.
\right\rangle. 
\end{equation}

There is an obvious epimorphism 
$\Gamma\surj \pi_1(\Sigma_g)$, 
obtained by sending each $z_i$ to $1$.  Upon abelianizing, 
we obtain an isomorphism 
$\Gamma_{\ab} \cong \pi_1(\Sigma_g)_{\ab} \oplus \Theta$, 
where the torsion subgroup 
\begin{equation}
\label{eq:tors-g}
\Theta\coloneqq\Tors(\Gamma_{\ab})=
\Z_{m_1}\oplus \cdots \oplus \Z_{m_k}/(1,\dots ,1)\, , 
\end{equation}
has order 
$\abs{\Theta}=\theta(\mathbf{m})$, where 
\begin{equation}
\label{eq:theta}
\theta(\mathbf{m})\coloneqq m_1\cdots m_k/\lcm(m_1,\dots ,m_k)\, .
\end{equation}
Let us identify the character group $\T_\Gamma=\Hom(\Gamma, \C^{\times})$ 
with $\T_\Gamma^{0} \times \T_{\Theta}$, where 
$\T_\Gamma^{0}\cong (\C^{\times})^{2g}$ and $\T_{\Theta} \cong \Theta$.
As noted in \cite{ACM} (see also \cite{Su-imrn}), we then have that 
\begin{equation}
\label{eq:v1piorb}
\VV^1(\Gamma)=\begin{cases}
\T_\Gamma & \text{if $g>1$}, \\[2pt]
\big( \T_\Gamma \setminus 
\T_\Gamma^{0}\big) \cup  \{1\} 
& \text{if $g=1$ and $\theta(\mathbf{m})> 1$}, \\[2pt]
\{1\}  
& \text{otherwise}\, .
\end{cases}
\end{equation}

Let us identify $\Hom(\Gamma,\R)$ with $\R^{2g}$.  In the first case, 
$\Trop(\VV^1(\Gamma))=\tau_1(\VV^1(\Gamma))=\R^{2g}$, while in the 
third case $\Trop(\VV^1(\Gamma))=\tau_1(\VV^1(\Gamma))=\{0\}$.  On the 
other hand, in the second case $\T_\Gamma \setminus \T_\Gamma^{0}\cong 
\T_\Gamma^{0} \times (\Theta\setminus \{1\})$, where the second factor is 
nonempty; thus, $\Trop(\VV^1(\Gamma))=\R^{2g}$ 
yet $\tau_1(\VV^1(\Gamma))=\{0\}$. Using Corollary \ref{cor:sigma-empty-2}, 
we obtain the following result.

\begin{proposition}
\label{prop:bns-gamma}
Let $(\Sigma_{g},\mathbf{m})$ be a compact orbifold surface such that 
either $g\ge 2$ or $g=1$ and $\theta(\mathbf{m})>1$. Then
$\Sigma^1(\pi_1^{\orb}(\Sigma_g, \mathbf{m}))=\myempty$.
\end{proposition}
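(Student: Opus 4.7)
The plan is to apply Corollary \ref{cor:sigma-empty-2} directly, which says that $\Sigma^1(G)=\myempty$ whenever $\VV^1(G)$ contains a connected component of the character group $\T_G$. So the entire task reduces to checking that, in each of the two cases of the hypothesis, the characteristic variety $\VV^1(\Gamma)$ contains at least one full component of $\T_\Gamma$; this is immediate from formula \eqref{eq:v1piorb}.

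More precisely, I would split into the two cases of the hypothesis. In the case $g\ge 2$, formula \eqref{eq:v1piorb} gives $\VV^1(\Gamma)=\T_\Gamma$, so trivially every component of $\T_\Gamma$ lies in $\VV^1(\Gamma)$. In the case $g=1$ with $\theta(\mathbf{m})>1$, the torsion subgroup $\Theta=\Tors(\Gamma_{\ab})$ has order $\theta(\mathbf{m})\ge 2$ by \eqref{eq:tors-g}--\eqref{eq:theta}, so one can pick some $\rho\in \Theta\setminus\{1\}$; under the identification $\T_\Gamma\cong \T_\Gamma^{0}\times \T_\Theta$, the translated subtorus $\rho\cdot \T_\Gamma^{0}$ is precisely a connected component of $\T_\Gamma$ lying inside $\T_\Gamma\setminus \T_\Gamma^{0}$, and this set is contained in $\VV^1(\Gamma)$ by \eqref{eq:v1piorb}.

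In either case $\VV^1(\Gamma)$ contains a connected component of $\T_\Gamma$, so Corollary \ref{cor:sigma-empty-2} yields $\Sigma^1(\Gamma)=\myempty$. There is no real obstacle here, since all of the substantive work has been done: formula \eqref{eq:v1piorb} is quoted from \cite{ACM,Su-imrn}, and the translation from ``$\VV^1$ contains a component'' to ``$\Sigma^1$ is empty'' is exactly the content of Corollary \ref{cor:sigma-empty-2}, whose proof in turn rests on the tropical bound of Theorem \ref{thm:bns-trop} together with Lemma \ref{lemma:trop-all}. The only point worth making explicit for the reader is the counting argument $\theta(\mathbf{m})>1\Rightarrow \Theta\setminus\{1\}\ne \myempty$, which guarantees that a non-identity component actually exists in the second case.
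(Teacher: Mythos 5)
Your proof is correct and follows the same route as the paper: in each case you read off from formula \eqref{eq:v1piorb} that $\VV^1(\Gamma)$ contains a full connected component of $\T_\Gamma$ (using $\theta(\mathbf{m})>1 \Rightarrow \Theta\setminus\{1\}\neq\myempty$ for the $g=1$ case), and then invoke Corollary \ref{cor:sigma-empty-2}. This matches the paper's argument exactly; the paper's surrounding discussion of $\Trop(\VV^1(\Gamma))$ versus $\tau_1(\VV^1(\Gamma))$ is only there to contrast the tropical bound with the weaker exponential tangent cone bound and is not needed once Corollary \ref{cor:sigma-empty-2} is available.
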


Now let $\Sigma_{g,r}=\Sigma_g \setminus \{p_1,\dots , p_r\}$ be 
a Riemann surface of genus $g\ge 0$ with $r$ points removed ($r\ge 1$).   
Then $\pi_1(\Sigma_{g,r})=F_n$, where $n=b_1(\Sigma_{g,r})=2g+r-1$.  
To avoid trivialities, we will assume $n>0$; note that $n=1$ if and only if 
$g=0$ and $r=2$, i.e., $\Sigma_{g,r}=\C^{\times}$.

Next, consider a non-compact $2$-orbifold, i.e., 
a surface $(\Sigma_{g,r},\mathbf{m})$ with marked points $q_1,\dots, q_k$, 
weight vector $\mathbf{m}=(m_1,\dots, m_k)$, and $r$ points removed. 
The orbifold Euler characteristic of the surface is defined as 
$\chi^{\orb}(\Sigma_{g,r}, \mathbf{m})=1-n-\sum_{i=1}^{k} (1-1/m_i)$, 
while the orbifold fundamental group 
$\Gamma=\pi_1^{\orb}(\Sigma_{g,r}, \mathbf{m})$ 
associated to these data is the free product 
\begin{equation}
\label{eq:orbipi1 noncompact}
\Gamma=
F_{n} * \Z_{m_1} * \cdots * \Z_{m_k}\, .
\end{equation}
(The case $n=k=1$ and $m_1=2$ is the one analyzed in Example \ref{ex:z*z2}.) 
Thus, $\Gamma_{\ab}=\Z^{n}\oplus \Theta$, 
where now $\Theta=\Z_{m_1}\oplus \cdots \oplus \Z_{m_k}$.  
A similar computation as above shows that
\begin{equation}
\label{eq:v1pi}
\VV^1(\Gamma)=\begin{cases}
\T_\Gamma & \text{if $n>1$}, \\
\big( \T_\Gamma\setminus \T_\Gamma^{0}\big) \cup \{1\}
& \text{if $n=1$ and $\abs{\mathbf{m}}>0$}, \\
\{1\} & \text{if $n=1$ and $\abs{\mathbf{m}}=0$}. 
\end{cases}
\end{equation}

Identifying $\Hom(\Gamma,\R)=\R^{n}$, 
we find that $\Trop(\VV^1(\Gamma))=\tau_1(\VV^1(\Gamma))=\R^{n}$ 
in the first case and $\Trop(\VV^1(\Gamma))=\tau_1(\VV^1(\Gamma))=\{0\}$ 
in the last one.  On the other hand, in the second case 
$\Trop(\VV^1(\Gamma))=\R^{n}$ yet $\tau_1(\VV^1(\Gamma))=\{0\}$. 
Applying Corollary \ref{cor:sigma-empty-2} once again, we obtain the following 
result.

\begin{proposition}
\label{prop:bns-gamma-punc}
Let $(\Sigma_{g,r},\mathbf{m})$ be a punctured orbifold surface with 
negative orbifold Euler characteristic. Then 
$\Sigma^1(\pi_1^{\orb}(\Sigma_{g,r}, \mathbf{m}))=\myempty$.
\end{proposition}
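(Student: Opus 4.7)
The plan is to mimic the argument behind the preceding Proposition \ref{prop:bns-gamma}: combine the case-by-case description of $\VV^1(\Gamma)$ recorded in \eqref{eq:v1pi} with Corollary \ref{cor:sigma-empty-2}, whose hypothesis asks precisely that $\VV^1(\Gamma)$ contain a connected component of the character group $\T_\Gamma$. The entire content of the proof is to verify that the hypothesis $\chi^{\orb}(\Sigma_{g,r},\mathbf{m})<0$ places us in one of the branches of \eqref{eq:v1pi} where this containment holds.

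Using the formula $\chi^{\orb}(\Sigma_{g,r},\mathbf{m}) = 1 - n - \sum_{i=1}^{k}(1-1/m_i)$, with $n = 2g+r-1 \ge 0$ and each $m_i \ge 2$, I would split into three subcases. First, if $n\ge 2$ then $\chi^{\orb}<0$ holds automatically, and the first branch of \eqref{eq:v1pi} gives $\VV^1(\Gamma)=\T_\Gamma$, so every component of $\T_\Gamma$ lies in $\VV^1(\Gamma)$. Second, if $n=1$, the inequality $\chi^{\orb}<0$ becomes $\sum(1-1/m_i)>0$, which holds exactly when $|\mathbf{m}|>0$; here the middle branch of \eqref{eq:v1pi} gives $\T_\Gamma\setminus \T_\Gamma^{0} \subseteq \VV^1(\Gamma)$, and this is a nonempty union of components of $\T_\Gamma$ because $\Theta=\Z_{m_1}\oplus\cdots\oplus\Z_{m_k}$ is nontrivial. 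Finally, the degenerate case $n=0$ forces $g=0$ and $r=1$, whence $\Gamma_{\ab}=\Theta$ is purely torsion; in that case $\Hom(\Gamma,\R)=0$, the character sphere $S(\Gamma)$ is empty, and the conclusion holds vacuously.

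In every non-vacuous case we have placed ourselves in a setting where $\VV^1(\Gamma)$ contains a connected component of $\T_\Gamma$, so Corollary \ref{cor:sigma-empty-2} immediately yields $\Sigma^1(\Gamma)=\myempty$. The only subtlety to watch out for is the routine bookkeeping of matching the inequality $\chi^{\orb}<0$ to the cases of \eqref{eq:v1pi}, together with a sanity check that the boundary case $n=1$, $|\mathbf{m}|=0$ (where $\chi^{\orb}=0$ and $\Gamma\cong \Z$ has full BNS set) is correctly excluded by the hypothesis. The substantive tropical input has already been repackaged into Corollary \ref{cor:sigma-empty-2}, so no additional geometric work is required.
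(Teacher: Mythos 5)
Your proof is correct and follows essentially the same route as the paper: match the hypothesis $\chi^{\orb}<0$ to the branches of \eqref{eq:v1pi} where $\VV^1(\Gamma)$ contains a whole connected component of $\T_\Gamma$, then invoke Corollary \ref{cor:sigma-empty-2}. The paper runs the argument under its standing assumption $n>0$ (stated just before \eqref{eq:orbipi1 noncompact}), so your extra degenerate case $n=0$ is harmless but not strictly needed; otherwise the bookkeeping, including the observation that $\Theta$ is nontrivial when $|\mathbf{m}|>0$, matches the paper's reasoning exactly.
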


\section{Seifert manifolds}
\label{sect:seifert}

A compact $3$-manifold is a Seifert fibered space if it is 
foliated by circles. One can think of such a manifold $M$ as a  
bundle in the category of orbifolds, in which the circles of the 
foliation are the fibers, and the base space of the orbifold bundle is the 
quotient space of $M$ obtained by identifying each circle to a point. 
We point to \cite{JW,NeumannRaymond, Scott} as general references 
for the subject. 

For our purposes here, we will only consider compact, connected, 
orientable Seifert manifolds with orientable base.  Every such 
manifold $M$ admits an effective circle action, with orbit space 
a Riemann surface $\Sigma_g$, and finitely many exceptional 
orbits, encoded by pairs of coprime integers $(\alpha_1,\beta_1), 
\dots,  (\alpha_r,\beta_r)$ with $\alpha_i>0$. The fundamental group  
$G=\pi_1(M)$ admits a presentation of the form 
\begin{equation}
\label{eq:seifert-pi1}
G=
\left\langle 
\begin{array}{c}
x_1,\dots, x_g, y_1,\dots , y_g, \\[2pt]
z_1,\dots,z_r, h
\end{array}\left|
\begin{array}{l}
 [x_1,y_1]\cdots[x_g,y_g]z_1\cdots z_r=1, \\[2pt]
 z_1^{\alpha_1}h^{\beta_1}=\cdots =  z_r^{\alpha_r}h^{\beta_r} =1, 
\ \text{$h$ central}
\end{array}
\right.
\right\rangle, 
\end{equation}
where $h$ is the homotopy class of a regular orbit. 
We will denote by $e=-\sum_{i=1}^{r}\beta_i/\alpha_i$ 
the Euler number of the orbifold bundle. 
If $e=0$, then $G_{\ab}=\Z^{2g+1}$, and  if $e\ne 0$, then  
$G_{\ab}=\Z^{2g} \oplus \Tors(G_{\ab})$, where 
$\abs{ \Tors(G_{\ab})}=\alpha_1\cdots \alpha_r \abs{e}$. 
Write $\boldsymbol{\alpha}=(\alpha_1,\dots , \alpha_r)$. 

\begin{proposition}
\label{prop:cv1-seifert}
Let $M$ be Seifert manifold with genus $g$ base surface, 
$r$ exceptional fibers, and orbifold Euler number $e$. 
Suppose $e\ne 0$ and either $g>1$, or $g=1$ and 
$\theta(\boldsymbol{\alpha})>1$. Then $\Sigma^1(M)=\myempty$.
\end{proposition}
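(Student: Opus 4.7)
My plan is to exploit the canonical orbifold fibration $M\to (\Sigma_g,\boldsymbol{\alpha})$, which on the level of fundamental groups gives an epimorphism $\varphi\colon G\twoheadrightarrow \Gamma$, where $\Gamma=\pi_1^{\orb}(\Sigma_g,\boldsymbol{\alpha})$, obtained by killing the class $h$ of a regular fiber. The statement will then follow from Theorem \ref{thm:sigma-pencils} applied to this single epimorphism, once the hypothesis ``$e\ne 0$'' is converted into the cohomological statement that $\varphi^*$ is an isomorphism on $H^1(-,\R)$.

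The first (and key) step is to check that, when $e\ne 0$, the image of $h$ in $G_{\ab}$ is torsion. Abelianizing the presentation \eqref{eq:seifert-pi1}, the surface relation collapses to $z_1+\cdots+z_r=0$, while the fiber relations give $\alpha_i z_i+\beta_i h=0$. Multiplying the $i$-th fiber relation by $\prod_{j\ne i}\alpha_j$ and summing, the $z_i$-terms vanish thanks to the surface relation, leaving $(\alpha_1\cdots\alpha_r\cdot e)\,h=0$ in $G_{\ab}$. Hence for $e\ne 0$ the class of $h$ has finite order, $b_1(G)=2g=b_1(\Gamma)$, and $\varphi^*\colon H^1(\Gamma,\R)\to H^1(G,\R)$ is an $\R$-linear isomorphism; in particular it restricts to a bijection $\varphi^*\colon S(\Gamma)\isom S(G)$ of character spheres.

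The second step is almost immediate: under either ``$g>1$'' or ``$g=1$ and $\theta(\boldsymbol{\alpha})>1$'', formula \eqref{eq:v1piorb} applied to $\Gamma$ shows that $\VV^1(\Gamma)$ contains a connected component of the character group $\T_\Gamma$. We are therefore in position to apply Theorem \ref{thm:sigma-pencils} to the single pencil $\varphi\colon G\twoheadrightarrow \Gamma$, obtaining the inclusion
\[
\Sigma^1(G)\subseteq S\bigl(\varphi^*(H^1(\Gamma,\R))\bigr)^{\compl}.
\]
By Step 1 the subspace $\varphi^*(H^1(\Gamma,\R))$ equals all of $H^1(G,\R)$, so its trace on $S(G)$ is the whole sphere and the right-hand side is empty. (Equivalently, one can combine Proposition \ref{prop:bns-gamma}, which gives $\Sigma^1(\Gamma)=\myempty$, with the naturality statement Proposition \ref{prop:sigma1-nat}, using that $\varphi^*$ is a bijection of spheres.) This yields $\Sigma^1(M)=\Sigma^1(G)=\myempty$.

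The only real obstacle is the linear-algebra/homology bookkeeping in Step 1: pinning down that $e\ne 0$ is precisely the condition making $h$ torsion in $G_{\ab}$, and hence that the pulled-back characters $\varphi^*$ exhaust $H^1(G,\R)$. Once that is in hand, the tropical pencil machinery of Theorem \ref{thm:sigma-pencils} (or, equivalently, Corollary \ref{cor:sigma-empty-2} together with naturality) does the rest with no further work.
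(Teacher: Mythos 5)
Your proof is correct and essentially parallels the paper's argument. Both proofs use the orbit map to get an epimorphism $G\twoheadrightarrow\Gamma$ with $\Gamma=\pi_1^{\orb}(\Sigma_g,\boldsymbol{\alpha})$, both exploit $e\ne 0$ to conclude $b_1(G)=b_1(\Gamma)=2g$, and both invoke the description \eqref{eq:v1piorb} of $\VV^1(\Gamma)$ under the stated hypotheses. The only cosmetic difference is in packaging: the paper pushes $\VV^1(\Gamma)$ forward along $p^*$ via Proposition~\ref{prop:v1-nat}, observes that the equal-rank condition forces $\VV^1(G)$ to contain an entire component of $\T_G$, and then applies Corollary~\ref{cor:sigma-empty-2}; you instead invoke Theorem~\ref{thm:sigma-pencils} (for a single pencil) and observe that $\varphi^*(H^1(\Gamma,\R))=H^1(G,\R)$ makes the tropical bound empty. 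These routes rest on exactly the same ingredients, and your parenthetical alternative via Proposition~\ref{prop:bns-gamma} plus the $\Sigma^1$-naturality of Proposition~\ref{prop:sigma1-nat} is also valid and perhaps even cleaner. Your explicit verification in Step~1 that $h$ is torsion in $G_{\ab}$ when $e\ne0$ is a useful addition (the paper records this fact without computation in the setup preceding the proposition).
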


\begin{proof}
The orbit map $p\colon M\to \Sigma_g$ induces an epimorphism 
$\pi_1(M)\surj \pi_1(\Sigma_g)$ sending $h\mapsto 1$. This 
map factors through an epimorphism $G\surj \Gamma$, where 
$\Gamma=\pi_1^{\orb}(\Sigma_g, \boldsymbol{\alpha})$. 
Passing to abelianizations, we obtain an epimorphism 
$p_{*}\colon G_{\ab}\surj \Gamma_{\ab}$, which in turn 
induces a monomorphism $p^*\colon \T_\Gamma\inj \T_G$. 

Since $e\ne 0$, the groups $G_{\ab}$ and $\Gamma_{\ab}$ 
have the same rank (equal to $2g$); thus, the map $p^*$ 
restricts to an isomorphism $p^* \colon \T_\Gamma^0\isom \T_G^0$.
On the other hand, we also know from Proposition \ref{prop:v1-nat} 
that the map  $p^*\colon \T_\Gamma\inj \T_G$ sends 
$\VV^1(\Gamma)$ to $\VV^1(G)$.  There are two 
cases to consider.

\begin{itemize}
\item
If $g>1$, then by \eqref{eq:v1piorb}, $\VV^1(\Gamma)=\T_{\Gamma}$. 
Therefore $\VV^1(G)$ contains $p^*(\T_{\Gamma})=p^*(\VV^1(\Gamma))$,
and thus it must also contain $p^*(\T_{\Gamma}^0)=\T_G^0$. 
\item
If $g=1$ and $\theta(\boldsymbol{\alpha})>1$, then again by \eqref{eq:v1piorb}, 
$\VV^1(\Gamma)=\T_\Gamma \setminus \T_\Gamma^{0}$, which 
is a non-empty union of torsion-translated copies of $\T_\Gamma^{0}$, 
since $\Tors(\Gamma_{\ab})\ne 0$. Arguing as above, we  infer that $\VV^1(G)$ 
must contain a translated copy of $\T_G^0$. 
\end{itemize}
In either case, the variety $\VV^1(G)$ contains a connected component 
of $\T_G$. Hence, by Corollary \ref{cor:sigma-empty-2}, $\Sigma^1(M)=\myempty$.
\end{proof}

A well-known class of Seifert manifolds arises in singularity theory. 
Let $(a_1, \dots, a_n)$ be an $n$-tuple of integers,
with $a_j \geq 2$. Consider the variety $X$ in $\C^n$ defined by the
equations $c_{j 1}x^{a_1}_1 + \cdots + c_{j n}x^{a_n}_n=0$,
for $1 \leq j \leq n-2$. Assuming all maximal minors of the
matrix $\big(c_{j k}\big)$ are non-zero, $X$ is a
quasi-homogeneous surface, with an isolated singularity at $0$.
The space $X$ admits a good $\C^{\times}$-action.
Set $X^*=X\setminus \{0\}$, and let $p\colon X^* \to\Sigma_g$ be the
corresponding projection onto a smooth projective curve.

By definition, the Brieskorn manifold $M= \Sigma (a_1, \dots, a_n)$
is the link of the quasi-ho\-mogenous singularity $(X, 0)$.  
As such, $M$ is a closed, smooth, oriented $3$-manifold which is 
homotopy equivalent to $X^*$. In fact, $M$ is a Seifert 
manifold, with $S^1$-action obtained by restricting the 
$\C^{\times}$-action on $X$. Put
$\ell=\lcm(a_1, \dots, a_n)$, 
$\ell_i=\lcm (a_1, \dots,\widehat{a_i} ,\dots,a_n)$, 
and $a=a_1 \cdots a_n$, and note that $s_i\coloneqq a / (a_i\ell_i)\in \Z$. 
Set $\alpha_i=\ell/\ell_i$ and define integers $0<\beta_i<\alpha_i$ by 
$\beta_i (\ell/a_i)\equiv 1\pmod{\alpha_i}$. The $S^1$-equivalent
homeomorphism type of $M$ is then determined by
the following Seifert invariants associated to the
projection $\left. p\right|_M\colon M \to \Sigma_g$:
\begin{itemize}
\item The exceptional orbit data,
$(s_1(\alpha_1, \beta_1), \ldots, s_n(\alpha_n, \beta_n))$, 
where $s_i(\alpha_i ,\beta_i)$
signifies $(\alpha_i ,\beta_i)$ repeated $s_i$ times,
unless $\alpha_i=1$, in which case $s_i(\alpha_i ,\beta_i)$
is to be removed from the list.
\item The genus of the base curve, 
$g=\frac{1}{2}\left( 2+\frac{(n-2)a}{\ell}-\sum^n _{i=1}s_i \right)$.
\item The orbifold Euler number,  $e=-a/\ell^2<0$.
\end{itemize}

The group $H_1(M, \Z)$ has rank $2g$ and torsion subgroup 
of order $\alpha_1^{s_1} \cdots \alpha_n^{s_n} \cdot \abs{e}$.
Set $\alpha=\alpha_1^{s_1} \cdots \alpha_n^{s_n} / 
\lcm(\alpha_1,\dots ,\alpha_n)$.  Applying 
Proposition \ref{prop:cv1-seifert}, we obtain the following corollary.

\begin{corollary}
\label{cor:sigma-brieskorn}
Let $M$ be a Brieskorn manifold as above. If either $g>1$, 
or $g=1$ and $\alpha>1$, then $\Sigma^1(M)=\myempty$.
\end{corollary}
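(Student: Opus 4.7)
The plan is to derive the corollary directly from Proposition \ref{prop:cv1-seifert} by matching invariants. The Brieskorn manifold $M=\Sigma(a_1,\dots,a_n)$ has been presented as a Seifert fibered space with genus-$g$ orientable base and Seifert invariants $(s_i(\alpha_i,\beta_i))_{i=1}^n$, so we are automatically in the framework to which Proposition \ref{prop:cv1-seifert} applies; what remains is to check the two numerical hypotheses of that proposition under our standing assumptions.

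First I would verify that $e\neq 0$: by the explicit formula recalled above, $e=-a/\ell^2$, and since each $a_i\ge 2$ the integer $a=a_1\cdots a_n$ is positive, so $e<0$ in particular $e\ne 0$. Next I would match the quantity $\theta(\boldsymbol{\alpha})$ appearing in Proposition \ref{prop:cv1-seifert} with the quantity $\alpha$ defined in the statement of the corollary. The multiplicity vector $\boldsymbol{\alpha}$ of the exceptional fibers, read off the Seifert data $(s_1(\alpha_1,\beta_1),\dots,s_n(\alpha_n,\beta_n))$, is the list obtained by repeating each $\alpha_i$ exactly $s_i$ times (and deleting the $\alpha_i=1$ entries, whose contribution to both the product and the $\lcm$ is trivial). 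Thus
\begin{equation*}
\theta(\boldsymbol{\alpha})=\frac{\alpha_1^{s_1}\cdots\alpha_n^{s_n}}{\lcm(\alpha_1,\dots,\alpha_n)}=\alpha,
\end{equation*}
by the very definition of $\alpha$ given just before the corollary.

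With these two identifications in place, the hypotheses of Proposition \ref{prop:cv1-seifert} are satisfied in each of the two cases of the corollary: if $g>1$ then directly, and if $g=1$ and $\alpha>1$ then via $\theta(\boldsymbol{\alpha})=\alpha>1$. Applying the proposition yields $\Sigma^1(M)=\myempty$.

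I do not expect any genuine obstacle: the work of Proposition \ref{prop:cv1-seifert} has already packaged the hard content (the epimorphism $G\twoheadrightarrow\pi_1^{\orb}(\Sigma_g,\boldsymbol{\alpha})$, the functoriality of $\VV^1$ from Proposition \ref{prop:v1-nat}, and Corollary \ref{cor:sigma-empty-2}), and what remains for the Brieskorn case is a bookkeeping exercise with the Seifert invariants of a quasi-homogeneous surface singularity. The only point requiring some care is making sure that exceptional fibers with $\alpha_i=1$ (which are not genuine exceptional fibers) are correctly discarded when forming $\boldsymbol{\alpha}$, but this is precisely the convention under which both $\theta(\boldsymbol{\alpha})$ and $\alpha$ are defined, so the match is automatic.
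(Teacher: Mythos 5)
Your proof is correct and matches the paper's approach, which consists precisely of applying Proposition~\ref{prop:cv1-seifert} after the identifications $e=-a/\ell^2<0$ (so $e\ne 0$) and $\theta(\boldsymbol{\alpha})=\alpha_1^{s_1}\cdots\alpha_n^{s_n}/\lcm(\alpha_1,\dots,\alpha_n)=\alpha$, with the $\alpha_i=1$ entries harmlessly discarded. You have simply spelled out the bookkeeping that the paper compresses into the single sentence ``Applying Proposition~\ref{prop:cv1-seifert}, we obtain the following corollary.''
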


Here is a concrete example, which builds on computations 
from  \cite{DPS-imrn, SYZ-pisa}.

\begin{example}
\label{ex:sigma248}
The Brieskorn manifold $M=\Sigma(2,4,8)$ fibers over the torus 
with two exceptional fibers, each of multiplicity $2$, and with orbifold 
Euler number $e=-1$; thus, $g=1$ and $\alpha=2$.  
By Corollary \ref{cor:sigma-brieskorn}, 
we have that $\Sigma^1(M)=\myempty$.
Alternatively, it is readily seen that $H_1(M,\Z)=\Z^2 \oplus \Z_4$,  
and hence $\T_M= (\C^{\times})^2 \times \{\pm 1, \pm i\}$. 
A short computation shows that $\VV^1(M) = \{1\} \cup  (\C^{\times})^2 \times \{-1\}$. 
Thus, $\tau_1(\VV^1(M))=\{0\}$, and so the 
bound from Theorem \ref{thm:bns tau} does not say anything in this case. 
On the other hand, $\Trop_1(\VV^1(M))=H^1(M,\R)$, and so the 
bound from Corollary \ref{cor:sigma1-X}  gives the precise 
answer for $\Sigma^1(M)$.
\end{example}

\section{K\"{a}hler manifolds}
\label{sect:kahler} 
Let $M$ be a compact, connected, K\"{a}hler manifold.    
The structure of the characteristic varieties of such manifolds 
was determined by Green and Lazarsfeld in \cite{GL87, GL91}, 
building on work of Castelnuovo and de Franchis, 
Beauville \cite{Be92}, and Catanese \cite{Cat91}.   
For this reason, the varieties $\VV^i(M)$ are also known 
in this context as the {\em Green--Lazarsfeld sets}\/ of $M$. 
The theory was further amplified by Simpson \cite{Si93}, 
Ein and Lazarsfeld \cite{EL97},  and Arapura \cite{Ar97}, 
developed in  \cite{Di07, De08, DPS-duke, 
ACM, BW12}, and completed by Wang in \cite{Wa16}.  
The cornerstone of this theory is that all the Green--Lazarsfeld 
sets of $M$ are finite unions of torsion translates of algebraic 
subtori of $H^1(M,\C^{\times})$. 

In degree $i=1$, the structure of the Green--Lazarsfeld 
set $\VV^1(M)$ can be made more precise. 
As before, let $\Sigma_g$ be a Riemann surface of genus 
$g\ge 1$, with marked points $q_1,\dots, q_k$, and weight 
vector $\mathbf{m}=(m_1,\dots, m_k)$ with $m_i\ge 2$, 
and set $\abs{\mathbf{m}}\coloneqq k$.   
A surjective map $f\colon M\to (\Sigma_g,\mathbf{m})$ 
is called an {\em orbifold fibration}\/ if $f$ is holomorphic, 
the fiber over any non-marked point is connected, 
and, for every marked point $q_i$ the multiplicity of the fiber 
$f^{-1}(q_i)$ equals $m_i$.  Such a map induces 
an epimorphism $f_{\sharp} \colon \pi_1(M) \surj \Gamma$, 
where $\Gamma=\pi_1^{\orb}(\Sigma_g, \mathbf{m})$. 
The induced morphism of character groups, 
$f_{\sharp}^*\colon \T_\Gamma\inj \T_{\pi_1(M)}$, 
sends $\VV^1(\Gamma)$ to a union of (possibly torsion-translated) 
subtori inside $\VV^1(M)$.

Two orbifold fibrations, $f\colon M\to (\Sigma_g,\mathbf{m})$ and 
$f'\colon M\to (\Sigma_{g'},\mathbf{m}')$, are equivalent 
if there is a biholomorphic map $h\colon \Sigma_g\to \Sigma_{g'}$ 
which sends marked points to marked points, while preserving multiplicities. 
As shown by Delzant \cite[Thm.~2]{De08}, a K\"ahler manifold $M$ admits 
only finitely many equivalence classes of orbifold fibrations  
for which the orbifold Euler characteristic of the base is negative.
The next theorem, which summarizes several results 
from \cite{Ar97, Di07, De08, ACM}, shows that 
{\em all}\/ positive-dimensional components in the first 
characteristic variety of $M$ arise by pullback 
along this finite set of  orbifold fibrations.  

\begin{theorem}
\label{thm:cv1 kahler}
Let $M$ be a compact K\"{a}hler manifold.  Then 
\begin{equation}
\label{eq:vvk}
\VV^1(M)=\bigcup_{\alpha \in \operatorname{Fib}(M)} (f_{\alpha})_{\sharp}^*
\big(\VV^1(\pi_1^{\orb}(\Sigma_{g_\alpha},\mathbf{m}_{\alpha}) \big) 
\cup Z\, , 
\end{equation}
where $Z$ is a finite set of torsion characters, and the union runs 
over a (finite) set of equivalence classes of orbifold fibrations 
$f_{\alpha}\colon M\to (\Sigma_{g_\alpha},\mathbf{m}_{\alpha})$ 
with either $g_{\alpha}\ge 2$, or $g_{\alpha}=1$ and 
$\abs{\mathbf{m}_{\alpha}}\ge 2$. 
\end{theorem}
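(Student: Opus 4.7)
The plan is to synthesize the structure theorems of Beauville, Castelnuovo--de~Fran\-chis, Arapura, Delzant, and Wang, together with the orbifold refinements carried out in \cite{Di07, De08, ACM}, in order to peel off $\VV^1(M)$ one piece at a time. Write $G=\pi_1(M)$ and consider separately the positive-dimensional irreducible components of $\VV^1(M)$ and the isolated points.

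First I would handle the positive-dimensional components. By Arapura's theorem \cite{Ar97}, every positive-dimensional irreducible component $W$ of $\VV^1(M)$ is a translate $\rho \cdot f_{\sharp}^*(\T_C)$ for some surjective holomorphic map $f\colon M\to C$ onto a smooth curve $C$ and some character $\rho \in \T_G$. Using that $M$ is compact K\"ahler, Stein factorization allows us to arrange that $f\colon M\to \Sigma_g$ has connected fibers and surjects onto a compact Riemann surface, so that $f_{\sharp}\colon G\surj \pi_1(\Sigma_g)$ is surjective. Next, I would upgrade $f$ to an orbifold fibration by recording the multiplicities $m_i$ of the multiple fibers over marked points $q_1,\dots,q_k$; this yields an orbifold $(\Sigma_g,\mathbf{m})$ and an epimorphism $f_{\sharp}\colon G\surj \Gamma \coloneqq \pi_1^{\orb}(\Sigma_g,\mathbf{m})$. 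Invoking the computation \eqref{eq:v1piorb} from Section \ref{sect:orbifolds}, the pullback $f_{\sharp}^*(\VV^1(\Gamma))$ is a union of (possibly torsion-translated) copies of $f_{\sharp}^*(\T_\Gamma^0)$, and it is positive-dimensional precisely when $g_\alpha\ge 2$, or $g_\alpha=1$ and $\abs{\mathbf{m}_\alpha}\ge 2$ (equivalently, $\theta(\mathbf{m}_\alpha)>1$). The orbifold refinements of Arapura's theorem in \cite{Di07, ACM} conversely guarantee that every positive-dimensional component of $\VV^1(M)$ is obtained in this way.

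Then I would dispose of finiteness and of the isolated part of $\VV^1(M)$. Delzant's theorem \cite[Thm.~2]{De08} ensures that, up to equivalence, only finitely many orbifold fibrations from $M$ have base of negative orbifold Euler characteristic, so the indexing set $\operatorname{Fib}(M)$ is finite. The zero-dimensional irreducible components of $\VV^1(M)$ form a finite set; by Wang's theorem \cite{Wa16}, every component of $\VV^1(M)$ is a torsion translate of a subtorus, which forces each isolated point to be a torsion character of $G$. Collecting these into the finite set $Z$ yields the decomposition \eqref{eq:vvk}.

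The hard part will be the compatibility between the analytic data (the map $f$ with its multiple fibers) and the group-theoretic pullback: one has to check that every torsion-translated subtorus predicted by $\VV^1(\Gamma)$ actually appears as a full component of $\VV^1(M)$, and that no positive-dimensional component of $\VV^1(M)$ escapes the classification. This is exactly the content of the orbifold-level Castelnuovo--de~Franchis--Arapura correspondence developed in \cite{Di07, De08, ACM}. A secondary subtlety is that Arapura's original statement produces a translation character $\rho$ which is not obviously torsion; it is Wang's theorem \cite{Wa16} that ultimately forces $\rho$ to be torsion, thereby matching the output of $f_{\sharp}^*(\VV^1(\Gamma))$ as computed via \eqref{eq:v1piorb}.
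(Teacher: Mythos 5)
The paper states Theorem~\ref{thm:cv1 kahler} as a summary of results from \cite{Ar97, Di07, De08, ACM} (and implicitly \cite{Si93, Wa16}) and does not supply a proof, so there is no ``paper's own proof'' to compare against. Your synthesis of those cited results is essentially the argument the paper has in mind: Arapura plus Stein factorization for the positive-dimensional components, orbifold refinements from \cite{Di07, ACM} to record the multiple-fiber data, Delzant \cite{De08} for finiteness of $\operatorname{Fib}(M)$, and Wang \cite{Wa16} (extending Simpson \cite{Si93}) for the torsion of the isolated points in $Z$. You also correctly identify where the real work lies.

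One imprecision to flag. The parenthetical ``(equivalently, $\theta(\mathbf{m}_\alpha)>1$)'' is wrong: when $g_\alpha=1$, the condition $\abs{\mathbf{m}_\alpha}\ge 2$ is necessary but not sufficient for $\theta(\mathbf{m}_\alpha)>1$. For instance $\mathbf{m}=(2,3)$ has $\abs{\mathbf{m}}=2$ but $\theta(\mathbf{m})=m_1m_2/\lcm(m_1,m_2)=1$, and more generally for $k=2$ one has $\theta(\mathbf{m})=\gcd(m_1,m_2)$. By formula \eqref{eq:v1piorb} it is $\theta(\mathbf{m}_\alpha)>1$ (equivalently, $\Tors(\Gamma_{\ab})\neq 0$), not $\abs{\mathbf{m}_\alpha}\ge 2$, that determines whether the pullback $(f_\alpha)_\sharp^*(\VV^1(\Gamma))$ is positive-dimensional when $g_\alpha=1$. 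The statement of the theorem only requires $\abs{\mathbf{m}_\alpha}\ge 2$, which harmlessly allows some orbifold fibrations contributing only $\{1\}$ to the union; your aside mistakenly collapses these two distinct conditions. Relatedly, the torsion of the translation character for the \emph{positive-dimensional} components follows from the orbifold structure itself (the character comes from $\Theta=\Tors(\Gamma_{\ab})$, which is finite), so Wang's theorem is really only required for the points in $Z$ --- you attribute that part correctly, but the ``secondary subtlety'' paragraph overstates the role of \cite{Wa16} in pinning down the translates of the positive-dimensional pieces.
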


It now follows from Theorem \ref{thm:sigma-pencils} and 
formula \eqref{eq:v1piorb} that 
\begin{equation}
\label{eq:sigma-k-inc}
\Sigma^1(M)\subseteq \bigg(
 \,\medmath{\bigcup_{\alpha \in \operatorname{Fib}(M)}}\, S( f_{\alpha}^* 
( H^1(C_{\alpha}, \R)) ) \bigg)^{\compl}.
\end{equation}
Remarkably, a result of Delzant (\cite[Thm.~1.1]{De10}) shows 
that the above inclusion holds as an equality.  In view of 
Corollary \ref{cor:trop trans}, we may recast this result in 
the tropical setting, as follows.

\begin{theorem}[\cite{De10}]
\label{thm:bns kahler}
Let  $M$ be a compact K\"{a}hler manifold. 
Then 
\begin{equation}
\label{eq:delz}
\Sigma^1(M)=S\big(\!\Trop(\VV^1(M))\big)^{\compl}.
\end{equation}
\end{theorem}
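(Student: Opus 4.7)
The plan is to prove the two inclusions separately, treating the forward one as an essentially automatic consequence of the machinery developed above, and the reverse one as a translation of Delzant's result \cite{De10} into tropical language.

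For the forward inclusion, I would begin with Corollary \ref{cor:sigma1-X}, which gives $\Sigma^1(M) \subseteq -S(\Trop(\VV^1(M)))^{\compl}$. To drop the minus sign, it suffices to show that $\Trop(\VV^1(M))$ is invariant under $x\mapsto -x$. Here I would invoke the symmetry $\VV^1(M) = \VV^1(M)^{-1}$ for compact K\"ahler manifolds: by Theorem \ref{thm:cv1 kahler} each positive-dimensional component of $\VV^1(M)$ is a torsion translate of a subtorus pulled back from some orbifold fibration, and by \eqref{eq:v1piorb} each such $\VV^1(\Gamma_\alpha)$ is visibly closed under character inversion; the torsion set $Z$ is likewise Galois-invariant (and hence inversion-invariant, inversion of roots of unity being a Galois automorphism). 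Since $\val(\rho^{-1})=-\val(\rho)$ for any $\rho\in H^1(M,\K^\times)$, this multiplicative symmetry tropicalizes to the desired additive symmetry $\Trop(\VV^1(M))=-\Trop(\VV^1(M))$.

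For the reverse inclusion, the essential input is Delzant's theorem \cite[Thm.~1.1]{De10}, which upgrades inclusion \eqref{eq:sigma-k-inc} to an equality:
\begin{equation*}
\Sigma^1(M)^{\compl} = \bigcup_{\alpha \in \operatorname{Fib}(M)} S\big(f_\alpha^*(H^1(\Gamma_\alpha,\R))\big).
\end{equation*}
The remaining task is to identify the right-hand side with $S(\Trop(\VV^1(M)))$. Starting from the Green--Lazarsfeld--Arapura decomposition (Theorem \ref{thm:cv1 kahler}), $\VV^1(M) = \bigcup_\alpha (f_\alpha)_\sharp^*(\VV^1(\Gamma_\alpha)) \cup Z$ with $Z$ a finite set of torsion characters. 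Under the hypotheses of Theorem \ref{thm:cv1 kahler} each $\VV^1(\Gamma_\alpha)$ contains a full connected component of $\T_{\Gamma_\alpha}$ by \eqref{eq:v1piorb}, so Lemma \ref{lemma:trop-all} gives $\Trop(\VV^1(\Gamma_\alpha)) = H^1(\Gamma_\alpha,\R)$. Combining this with the naturality of tropicalization under monomial maps \eqref{eq:trop-mat}, the additivity \eqref{eq:trop-union}, and the fact that a finite torsion subset of a complex torus tropicalizes to $\{0\}$, I would obtain
\begin{equation*}
\Trop(\VV^1(M)) = \bigcup_{\alpha} f_\alpha^*(H^1(\Gamma_\alpha,\R)),
\end{equation*}
matching Delzant's formula. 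Intersecting with the unit sphere and taking complements yields the claimed equality.

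The main obstacle is Delzant's theorem itself, whose proof relies on pluriharmonic maps to trees and a refined analysis of the pencil structure arising from orbifold fibrations---ingredients that lie outside the tropical/jump-loci framework developed in this paper. Once that is granted, the tropical recast becomes a bookkeeping exercise combining the structure of $\VV^1(M)$ with the elementary properties of $\Trop$ assembled in Section \ref{sect:trop}. A minor technical point to be handled carefully is confirming that the sporadic torsion components in $Z$ contribute nothing to $\Trop(\VV^1(M))$ beyond the origin, which follows immediately since $\nu_X$ vanishes on finite-order characters.
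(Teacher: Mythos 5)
Your proposal is correct and follows essentially the same route the paper takes: invoke Delzant's theorem to upgrade \eqref{eq:sigma-k-inc} to an equality, then identify $\Trop(\VV^1(M))$ with $\bigcup_\alpha f_\alpha^*(H^1(\Gamma_\alpha,\R))$ via the structure of the Green--Lazarsfeld sets and the elementary tropicalization formulas of \S\ref{sect:trop} (the paper packages this identification through Corollary \ref{cor:trop trans} rather than through \eqref{eq:trop-mat} and Lemma \ref{lemma:trop-all}, but the content is the same). Your separate symmetry argument for the forward inclusion is sound but redundant: once $\Trop(\VV^1(M))$ is computed to be a union of linear subspaces, it is automatically invariant under $x\mapsto -x$, so the sign drops without a Galois argument.
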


\begin{remark}
\label{rem:fv}
As shown by Friedl and Vidussi in \cite[Lemma 2.3]{FV19}, the set $\Sigma^1(M)$ 
is non-empty if and only if $\pi_1(M)$ virtually algebraically fibers 
(i.e., it contains a finite-index subgroup which maps onto $\Z$ 
with finitely generated kernel). By Theorem \ref{thm:bns kahler}, 
this condition is equivalent to $\Trop(\VV^1(M))\subsetneqq H^1(M,\R)$.
\end{remark}

\begin{example}
\label{ex:omega kahler}
Let $C_1$ be a smooth, complex curve of genus $2$ 
with an elliptic involution $\sigma_1$, and let $C_2$ be 
a curve of genus $3$ with a free involution $\sigma_2$. 
Then $\Sigma_1=C_1/\sigma_1$ is a curve of genus one, 
$\Sigma_2=C_2/\sigma_2$ is a curve of genus two, 
and $M=(C_1 \times C_2)/\sigma_1 \times \sigma_2$   
is the smooth, complex projective surface constructed in \cite{CCM}.
Projection onto the first coordinate yields an orbifold fibration  
$f_1 \colon M\to \Sigma_1$ with two multiple fibers, 
each of multiplicity $2$, while 
projection onto the second coordinate 
defines a smooth fibration $f_2\colon M\to \Sigma_2$. 
It is readily seen that $H_1(M,\Z)=\Z^6$, and so  
$H^1(M,\C^{\times}) = (\C^{\times})^6$.   In  \cite{Su-imrn}, 
we showed that   $\VV^1(M)=
\{t_4=t_5=t_6=1, \, t_3=-1\}\cup \{ t_1=t_2=1\}$, 
with the two components obtained by pullback along 
the maps $f_1$ and $f_2$.   Thus, $\Sigma^1(M)$ 
is the complement in $S^5$ of the two subspheres 
cut out by the subspaces $\{x_3=\cdots = x_6=0\}$ and 
$\{x_1= x_2=0\}$, respectively.
\end{example}

\section{Hyperplane arrangements}
\label{sect:arrs}

\subsection{Arrangement complements}
\label{subsec:hyparr}
Let $\A$ be an arrangement of hyperplanes in the complex affine space  
$\C^{\ell}$. We denote by $L(\A)$ the intersection poset of $\A$, and by 
$L_k(\A)$ those subspaces  in $L(\A)$ of codimension $k$. 
We also let $M(\A)=\C^{\ell}\setminus \bigcup_{H\in \A} H$ be the complement 
of the arrangement. Without much loss of generality, we may assume $\A$ is 
central and essential, that is, $\bigcap_{H\in \A} H=\{0\}$.    

The complement $M=M(\A)$ has the homotopy type of a finite, 
connected CW-complex of dimension $\ell$. The homology 
groups $H_i(M,\Z)$ are all torsion-free, and $b_1(M)$ is 
equal to $\abs{\A}$, the number of hyperplanes in $\A$.  
The cohomology ring of $M$ was computed by Brieskorn in the 
early 1970s, building on work of Arnol'd; a description of this 
ring in terms of $L(\A)$ was given by Orlik and Solomon in 1980. 
Brieskorn's work  implies that the space 
$M$ is formal; thus, by \eqref{eq:tcone}, we have that 
$\tau_1(\VV^q(M))=\RR^q(M)$, for all $q$. 

\subsection{BNSR invariants of arrangements}
\label{subsec:bnsr-arr}
We now turn to the $\Sigma$-invariants of arrangement complements.  
Although we do not know whether these sets are symmetric in 
general, the next result (which generalizes an observation from 
\cite{PS-plms}) singles out a situation where they are.

\begin{proposition}
\label{props:arr-symm}
Let $\A$ be the complexification of an arrangement $\A_{\R}$ of real 
hyperplanes in $\R^{\ell}$, and let $M$ be the complement 
of $\A$ in $\C^{\ell}$.  Then $\Sigma^q(M,\Z)=-\Sigma^q(M,\Z)$, for all $q$.
\end{proposition}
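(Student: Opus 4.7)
The plan is to exhibit a self-homeomorphism of $M$ that induces $-\id$ on $H_1(M,\R)$, and then invoke the symmetry criterion of Proposition \ref{prop:symm}. The natural candidate is complex conjugation.

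Since $\A$ is the complexification of a real arrangement $\A_{\R}$, each hyperplane $H\in \A$ is defined by a linear form $f_H\in \R[z_1,\dots,z_\ell]$. Consequently, the complex conjugation map $c\colon \C^{\ell}\to \C^{\ell}$, $z\mapsto \bar{z}$, permutes the hyperplanes of $\A$ (in fact, fixes each one setwise) and restricts to a self-homeomorphism $c\colon M\to M$, which is in particular a homotopy equivalence.

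The next step is to compute $c_*$ on $H_1(M,\Z)$. Recall that this group is free abelian with basis the meridians $\{\gamma_H\}_{H\in \A}$, where each $\gamma_H$ is a small loop encircling $H$ once in the positive sense in a complex normal slice. For a real hyperplane $H=\{f_H=0\}$, pick a point $z_0\in H$ and a real vector $v$ with $f_H(v)=1$, so that $\gamma_H$ may be represented by $t\mapsto z_0+\varepsilon e^{2\pi i t}v$. Applying $c$ yields the loop $t\mapsto \bar{z}_0+\varepsilon e^{-2\pi i t}v$, which is a meridian around $H$ traversed in the opposite direction. Hence $c_*(\gamma_H)=-\gamma_H$ for every $H\in \A$, and therefore $c_* = -\id$ on $H_1(M,\Z)$, and a fortiori on $H_1(M,\R)$.

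With these two ingredients in place, Proposition \ref{prop:symm} applied to $f=c$ gives $\Sigma^q(M,\Z)=-\Sigma^q(M,\Z)$ for all $q\ge 1$, as desired. The only potentially delicate point is verifying that $c_*$ acts as $-\id$ on $H_1$; all other steps are essentially formal consequences of the existence of a real structure on $\A$ together with Proposition \ref{prop:symm}.
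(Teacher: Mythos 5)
Your proof is correct and follows essentially the same route as the paper: both use complex conjugation as the self-homeomorphism of $M$, verify that it reverses each meridian (hence induces $-\id$ on $H_1(M,\Z)$), and then invoke Proposition \ref{prop:symm}. Your write-up merely spells out the meridian computation in slightly more detail than the paper does.
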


\begin{proof}
The group $H_1(M,\Z)$ is freely generated by the meridians 
of the hyperplanes in $\A$. If $\A$ is defined by real equations, 
complex conjugation in $\C^{\ell}$ restricts to a diffeormorphism $f\colon 
M\to M$ sending each meridian $S^1$ to itself by a map of degree $-1$. 
Thus, $f_*=-\id\colon H_1(M,\Z) \to H_1(M,\Z)$, and the claim follows 
from Proposition \ref{prop:symm}.
\end{proof}

At a 2007 Oberwolfach Mini-Workshop \cite{FSY}, the question 
was raised whether $\Sigma^1(M)$ equals $S(\RR^1(M,\R))^{\compl}$ 
for $M=M(\A)$. In an important particular case, the answer turned out to be yes. 
 
\begin{example}
\label{ex:braid}
 Let $\A$ be the braid arrangement in $\C^n$, consisting of all the 
hyperplanes $z_i-z_j=0$ with $1\le i<j\le n$.  The complement $M$ 
is the configuration space of $n$ ordered points in $\C$, and is 
thus a classifying space for the pure braid group $P_n$.  
As shown in \cite{KMM}, 
$\Sigma^1(M)$ is equal to $S(\RR^1(M,\R))^{\compl}$, which is the 
complement of a collection of $\binom{n+1}{4}$ great circles  
in $S^{\binom{n}{2}-1}$.
\end{example}

In general, though, the answer to the aforementioned question is no: as 
we showed in \cite{Su-pisa12}, there is an arrangement complement $M$ for 
which $\Sigma^1(M)\subsetneqq S(\RR^1(M,\R))^{\compl}$.  We will 
revisit this example in Proposition \ref{prop:del-sigma}, and explain it from 
a tropical point of view. 

\subsection{A tropical upper bound}
For a central, essential arrangement $\A$ in $\C^{\ell}$ with complement 
$M=M(\A)$, setting $n=\abs{\A}$ and ordering the hyperplanes as 
$H_1,\dots, H_n$ allows us to identify $H^1(M,\R) = \R^n$ and $S(M)=S^{n-1}$.
Since arrangement complements are formal, the ``resonance 
upper bound" from \eqref{eq:bns-res bound} holds. The next 
theorem provides a much improved  ``tropical  upper bound" 
for the BNSR invariants of arrangements.

\begin{theorem}
\label{thm:bnsr-trop-arr}
Let $M$ be the complement of an arrangement of $n$ hyperplanes in 
$\C^{\ell}$.  Then, for each $1\le q\le \ell-1$, the following hold.
\begin{enumerate}
\item \label{arr1}
$\Trop(\VV^q(M))$ is the union of a subspace arrangement in $\R^n$.
\item \label{arr2}
$\Sigma^q(M,\Z) \subseteq  S(\Trop(\VV^q(M)))^{\compl}$.
\end{enumerate}
Consequently, each BNSR invariant of $M$ is contained in the complement 
of an arrangement of great subspheres in $S^{n-1}$.
\end{theorem}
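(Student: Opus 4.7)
The plan is to handle the two claims separately and then read off the consequence. For part \ref{arr1}, I would invoke the structural theorem for characteristic varieties of arrangement complements: every irreducible component of $\VV^q(M)$ is a torsion translate of an algebraic subtorus of the character group $H^1(M,\C^{\times})$. For $q=1$ this is classical, due to Arapura and Libgober; for arbitrary $q\le \ell-1$ it follows from the work of Budur--Wang on cohomology jump loci of smooth quasi-projective varieties, combined with the formality of $M$. Granting this, each irreducible component can be written as $\rho_i \cdot T_i$, with $T_i$ an algebraic subtorus and $\rho_i$ a torsion character. Lemma \ref{lem:subtori} presents each $T_i$ as $\exp(L_i \otimes_{\Q} \C)$ for some rational linear subspace $L_i \subset \Q^n$, and Corollary \ref{cor:trop trans} then computes
\begin{equation*}
\Trop(\VV^q(M)) = \bigcup_i L_i \otimes_{\Q} \R,
\end{equation*}
exhibiting $\Trop(\VV^q(M))$ as a subspace arrangement in $\R^n$, as required.

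For part \ref{arr2}, I would simply invoke the main result, Theorem \ref{thm:bns-trop}, which gives $\Sigma^q(M,\Z) \subseteq S(\Trop(\VV^{\le q}(M)))^{\compl}$. Since $\VV^q(M) \subseteq \VV^{\le q}(M)$, monotonicity of tropicalization yields $\Trop(\VV^q(M)) \subseteq \Trop(\VV^{\le q}(M))$, and the order-reversing effect of complementation inside the sphere $S(M)$ gives
\begin{equation*}
S(\Trop(\VV^{\le q}(M)))^{\compl} \subseteq S(\Trop(\VV^q(M)))^{\compl},
\end{equation*}
which chains with the preceding bound to deliver the claimed inclusion. The final consequence is then immediate: intersecting the subspace arrangement $\Trop(\VV^q(M))$ with the unit sphere $S^{n-1}$ produces an arrangement of great subspheres, and part \ref{arr2} forces $\Sigma^q(M,\Z)$ to lie in its complement.

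The hard part will be securing the structural input behind part \ref{arr1} in the required generality: while the torsion-translated-subtorus description of $\VV^1(M)$ is classical, the analogous statement for $\VV^q(M)$ with $q>1$ relies on heavier machinery (propagation of cohomology jump loci and Hodge-theoretic constraints on support loci of smooth complex algebraic varieties), which has to be cited rather than reproved here. Once that structural fact is in hand, the remainder is a short and transparent application of Corollary \ref{cor:trop trans} and Theorem \ref{thm:bns-trop}, both already established earlier in the paper.
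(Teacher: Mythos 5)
Your proposal is correct, and it diverges from the paper's proof in one interesting way. For part~\eqref{arr1}, both you and the paper rely on the structural fact that every irreducible component of $\VV^q(M)$ is a torsion translate of a subtorus and then apply Corollary~\ref{cor:trop trans} (equivalently, formulas~\eqref{eq:tropt} and~\eqref{eq:tropzt}); the paper cites Arapura~\cite{Ar97} alone for this, which already covers all degrees $q$, so the additional appeal to Budur--Wang and to formality, while not wrong, is unnecessary baggage. The more substantive difference is in part~\eqref{arr2}: the paper invokes the abelian-duality / propagation result of~\cite{DSY} to obtain the \emph{equality} $\VV^{\le q}(M)=\VV^q(M)$, whereas you only use the trivial inclusion $\VV^q(M)\subseteq\VV^{\le q}(M)$ together with monotonicity of $\Trop$ and the order-reversal of complementation to chain
\begin{equation*}
\Sigma^q(M,\Z)\subseteq S\big(\!\Trop(\VV^{\le q}(M))\big)^{\compl}\subseteq S\big(\!\Trop(\VV^q(M))\big)^{\compl},
\end{equation*}
which indeed delivers the stated inclusion. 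Your route is therefore more elementary and avoids the propagation machinery entirely. What it does \emph{not} recover is the implicit sharper content of the paper's argument, namely that $\Trop(\VV^{\le q}(M))=\Trop(\VV^q(M))$, so that the degree-$q$ bound in this theorem is literally identical to the cumulative bound in Theorem~\ref{thm:bns-trop} rather than merely a weakening of it. For the theorem as literally stated, though, your argument is complete and correct.
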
 

\begin{proof}
A theorem of Arapura \cite{Ar97} implies that all the irreducible components of 
$\VV^q(M)$ are translated subtori of $(\C^{\times})^n$.  The first claim then  
follows from formulas \eqref{eq:tropt} and \eqref{eq:tropzt}.

As shown in \cite{DSY}, $M$ is an abelian duality 
space, and consequently, its characteristic varieties propagate, 
that is, $\VV^1(M)\subseteq\cdots\subseteq\VV^{\ell-1}(M)$.  
Therefore, $\VV^{\le q}(M)=\VV^q(M)$. The second claim 
now follows from Theorem \ref{thm:bns-trop}.  
\end{proof}

\subsection{Lower bounds for the \texorpdfstring{$\Sigma$}{Sigma}-invariants}
\label{subsec:lower}
Let $G=\pi_1(M)$.  In \cite{KP15}, Kohno and 
Pajitnov showed that the homology groups of $M$ with coefficients in the 
Novikov--Sikorav completion $\widehat{\Z{G}}_{\chi}$ vanish in 
degrees $i<\ell$, provided $\chi$ belongs to the positive orthant  
of $S^{n-1}$. In view of Theorem \ref{thm:bns novikov}, this implies that 
 the negative orthant, $S_{-}^{n-1}$, is contained in $\Sigma^q(M,\Z)$, 
for all $q<\ell$. In particular, if $\ell>1$, then $S_{-}^{n-1}\subseteq \Sigma^1(M)$.  
In fact, as shown in \cite{Su-pisa12}, 
\begin{equation}
\label{eq:sigma1 arr}
\Sigma^1(M)=S^{n-1}\setminus 
\big(S^{n-2}\setminus \Sigma^{1}(U)\big)\, , 
\end{equation}
where $S^{n-2}$ is the great sphere cut out by the hyperplane 
$\sum \chi_j =0$, and $U$ is 
the complement of the projectivized arrangement. In particular, 
$S^{n-1}\setminus S^{n-2}\subseteq \Sigma^1(M)$. In the case 
of an arrangement of $n$ lines through the origin of $\C^2$, the 
complement deform-retracts onto the complement of the $n$-component 
Hopf link from Example \ref{ex:Hopf link}. This lower bound coincides 
with the upper bound from Theorem \ref{thm:bnsr-trop-arr}, 
and so $\Sigma^1(M)=S^{n-1}\setminus S^{n-2}$.  In general, though, 
the inclusion is strict. 

\subsection{BNS invariants of arrangement groups} 
\label{subsec:bns-arr}
To see why this is the case, we need a more detailed description 
of the first characteristic variety of a hyperplane arrangement.  The 
notion of orbifold fibration   
is defined for arrangement complements (and, indeed, for all 
smooth, quasi-projective varieties) in exact analogy  
with K\"ahler manifolds (see \S\ref{sect:kahler}). The 
next theorem summarizes several results from 
\cite{Ar97, Di07, FY07, PeY, Yu, ACM} in this direction. 

\begin{theorem}
\label{thm:v1a}
The decomposition into irreducible components of the first characteristic 
variety of an arrangement complement $M$ is of the form 
\begin{equation}
\label{eq:vv-arr}
\VV^1(M)=\bigcup_{\alpha\in \operatorname{Fib}(M)} (f_{\alpha})_{\sharp}^*
\big(\VV^1(\pi_1^{\orb}(\Sigma_{0,s_\alpha},\mathbf{m}_{\alpha}) \big) 
\cup Z\, , 
\end{equation}
where $Z$ is a finite set of torsion characters, and the union runs 
over a finite set of equivalence classes of orbifold fibrations 
$f_{\alpha}\colon M\to (\Sigma_{0,s_\alpha},\mathbf{m}_{\alpha})$ 
for which either $s_{\alpha}=3$ or $4$, or 
$s_{\alpha}= 2$ and $\abs{\mathbf{m}_{\alpha}}>0$.
\end{theorem}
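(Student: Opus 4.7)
The plan is to derive \eqref{eq:vv-arr} by refining Arapura's general structure theorem in two successive steps, each exploiting a feature specific to arrangement complements: the rationality of $M$ (through mixed Hodge theory) and the combinatorics of multinets.

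First I would invoke Arapura's theorem~\cite{Ar97}: on any smooth, quasi-projective variety $M$, every positive-dimensional irreducible component of $\VV^1(M)$ is a torsion translate of an algebraic subtorus obtained by pullback $(f_\alpha)_\sharp^*$ from $\VV^1(\pi_1^{\orb}(C_\alpha))$, along a surjective, holomorphic orbifold fibration $f_\alpha\colon M\to C_\alpha$ onto a smooth orbifold curve of negative orbifold Euler characteristic, while the remaining zero-dimensional components are torsion characters. These sporadic torsion characters (together with any torsion translates not already absorbed in an orbifold-fibration component) are collected into the finite set $Z$. This already yields a decomposition shaped like \eqref{eq:vv-arr}, but with base orbifolds $(\Sigma_{g_\alpha,r_\alpha},\mathbf{m}_\alpha)$ of arbitrary type.

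Next I would force $g_\alpha=0$ by a weight argument. The connectedness of the generic fibers of $f_\alpha$ makes $(f_\alpha)_\sharp^*\colon H^1(C_\alpha,\mathbb{Q})\hookrightarrow H^1(M,\mathbb{Q})$ injective; since $M$ is the complement of a hyperplane arrangement, $H^1(M,\mathbb{Q})$ is pure of weight $2$ (being generated by the classes of the logarithmic forms $d\log \ell_i$), whereas the subspace $H^1(\overline{C}_\alpha,\mathbb{Q})\subseteq H^1(C_\alpha,\mathbb{Q})$ is pure of weight $1$. It follows that $H^1(\overline{C}_\alpha,\mathbb{Q})=0$, i.e.\ $\overline{C}_\alpha=\mathbb{P}^1$, and hence $C_\alpha=\Sigma_{0,s_\alpha}$ for some $s_\alpha\ge 2$; this step is implicit in \cite{Di07, ACM}.

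The main obstacle is the sharp upper bound $s_\alpha\le 4$, which I would obtain from the pencil/multinet correspondence of Falk--Yuzvinsky \cite{FY07} and Pereira--Yuzvinsky \cite{PeY, Yu}: an orbifold pencil $f_\alpha\colon M\to \Sigma_{0,s_\alpha}$ whose special fibers realize the multiplicities $\mathbf{m}_\alpha$ induces a $(k,d)$-multinet on $\A$ with $k=s_\alpha$ classes, and the multinet inequalities, together with Bezout's theorem applied to the pencil of defining polynomials, preclude $k\ge 5$. The degenerate case $s_\alpha=2$ corresponds to an orbifold pencil with only two special fibers; an elementary monodromy argument then shows that, unless some fiber carries multiplicity $>1$ (that is, $|\mathbf{m}_\alpha|>0$), the pullback $(f_\alpha)_\sharp^*(\VV^1(\pi_1^{\orb}(\Sigma_{0,2})))=\{1\}$ contributes nothing beyond the point absorbed in $Z$, which accounts for the side condition. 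Combining the three steps yields \eqref{eq:vv-arr}.
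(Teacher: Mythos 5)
The paper offers no proof of Theorem~\ref{thm:v1a}; the sentence preceding it explicitly says it ``summarizes several results from \cite{Ar97, Di07, FY07, PeY, Yu, ACM}.'' So your proposal is being compared against the implicit proof assembled from those references, not against a proof in this paper.

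Your three-step strategy---Arapura's structure theorem, a weight argument to force the base to have genus~$0$, and the multinet/pencil correspondence to constrain $s_\alpha$---is exactly the framework those references provide, and the mixed-Hodge-theoretic argument for $g_\alpha=0$ is sound. One attribution is a bit off: Arapura's theorem in~\cite{Ar97} gives the positive-dimensional components as translates of subtori pulled back from maps to curves, but does \emph{not} come equipped with the orbifold (marked-points and multiplicities) bookkeeping; that refinement, needed to account for the torsion translations and the multiple fibers, is due to \cite{Di07} and especially \cite{ACM}, which you should cite at that step rather than folding it into Arapura.

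The real issue is your claim that the multinet inequalities plus Bezout ``preclude $k\ge 5$.'' The Pereira--Yuzvinsky theorem \cite{PeY,Yu} bounds $k\le 4$ only for pencils that are not composed with a pencil of lines; a linear pencil through a codimension-two flat $X\in L_2(\A)$ with $|X|=k$ can have arbitrarily many completely reducible fibers, and gives a \emph{local} component $T_X\subset \VV^1(M)$ of dimension $k-1$, i.e.\ an orbifold fibration with $s_\alpha=k$ and $k$ unbounded. The paragraph of the paper immediately following the theorem makes precisely this local/non-local distinction (``the non-local components of $\VV^1(M)$ passing through $1$ all have dimension $2$ or $3$''), so the constraint ``$s_\alpha=3$ or $4$'' must be read as applying to the non-local fibrations, with the local ones (from flats of arbitrary multiplicity $\ge 3$) tacitly included in $\operatorname{Fib}(M)$ without any upper bound on $s_\alpha$. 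Your proof, as written, would wrongly exclude those local components; you need to separate out the pencils of lines before invoking Pereira--Yuzvinsky, at which point your argument recovers what the references actually establish.
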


By formula \eqref{eq:v1pi}, each positive-dimensional component $T_{\alpha}$ 
in \eqref{eq:vv-arr} is a torus of dimension $s_{\alpha}-1$, translated by a non-trivial 
torsion character if $\abs{\mathbf{m}_{\alpha}}>0$. As shown by Falk and Yuzvinsky 
in \cite{FY07}, each component $T_{\alpha}$  passing through the origin 
arises from a {\em $k$-multinet}\/ on a sub-arrangement 
$\B\subseteq \A$ with $k=s_{\alpha}$. Such a multinet consists 
of a partition $\B=(\B_1,\dots, \B_k)$, together with 
multiplicities $m_H$ attached to each hyperplane $H$ and a subset 
$\XX\subset L_2(\A)$ such that several conditions are satisfied; most 
importantly, for each $X\in\XX$, the sum 
$n_X\coloneqq \sum_{H\in\B_i\colon H\supset X} m_H$ is independent of~$i$. 
For instance, each flat $X\in L_2(\A)$ of size $k\ge 3$ gives rise 
to a $k$-multinet on the corresponding ``local" sub-arrangement,
and thus, to a component $T_X\subset \VV^1(M)$ 
of dimension $k-1$. On the other hand, 
the non-local components of $\VV^1(M)$ passing through $1$ all 
have dimension $2$ or $3$, see \cite{PeY, Yu}. 

As a consequence,  we have the following description of $\Sigma^1(M)$ in 
terms of the orbifold fibrations supported by $M$, with negative orbifold Euler 
characteristic of the base.

\begin{theorem}
\label{thm:bns-arr}
Let $M=M(\A)$ be an arrangement complement. Then,
\begin{equation}
\label{eq:sigma-pen-arr}
\Sigma^1(M)\subseteq \Bigg( \bigcup_{\alpha \in \operatorname{Fib}(M)}\, 
S\Big( f_{\alpha}^* \big( H^1(\Sigma_{0,s_\alpha}, \R)\big) \Big) \Bigg)^{\compl}.
\end{equation}
\end{theorem}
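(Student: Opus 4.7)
The plan is to recognize Theorem \ref{thm:bns-arr} as a direct application of Theorem \ref{thm:sigma-pencils}, fed by the structural description of $\VV^1(M)$ from Theorem \ref{thm:v1a}.  Concretely, I would take $G = \pi_1(M)$ and, for each equivalence class $\alpha \in \operatorname{Fib}(M)$ appearing in \eqref{eq:vv-arr}, take $f_\alpha = (f_\alpha)_\sharp \colon G \surj \Gamma_\alpha$, where $\Gamma_\alpha = \pi_1^{\orb}(\Sigma_{0, s_\alpha}, \mathbf{m}_\alpha)$.  Since an orbifold fibration is surjective and each factor through a connected base, each $(f_\alpha)_\sharp$ is an epimorphism, so the hypothesis of Theorem \ref{thm:sigma-pencils} on the maps themselves is automatic.

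The content is then to check the other hypothesis of Theorem \ref{thm:sigma-pencils}, namely that each $\VV^1(\Gamma_\alpha)$ contains a connected component of $\T_{\Gamma_\alpha}$.  By \eqref{eq:orbipi1 noncompact}, $\Gamma_\alpha = F_{n_\alpha} * \Z_{m_{\alpha,1}} * \cdots * \Z_{m_{\alpha,k_\alpha}}$ with $n_\alpha = s_\alpha - 1$.  The cases allowed in Theorem \ref{thm:v1a} split into $s_\alpha \in \{3,4\}$ (so $n_\alpha > 1$) and $s_\alpha = 2$ with $\abs{\mathbf{m}_\alpha} > 0$ (so $n_\alpha = 1$ with nontrivial torsion).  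In the first case, formula \eqref{eq:v1pi} yields $\VV^1(\Gamma_\alpha) = \T_{\Gamma_\alpha}$, which trivially contains a component of itself; in the second case, \eqref{eq:v1pi} gives $\VV^1(\Gamma_\alpha) \supseteq \T_{\Gamma_\alpha} \setminus \T_{\Gamma_\alpha}^{0}$, which is a nonempty union of torsion-translated copies of $\T_{\Gamma_\alpha}^{0}$ (each of which is a connected component of $\T_{\Gamma_\alpha}$).  In both cases, the hypothesis is verified.

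Applying Theorem \ref{thm:sigma-pencils} then gives
\begin{equation*}
\Sigma^1(M) \subseteq \bigg( \bigcup_{\alpha} S\big( (f_\alpha)_\sharp^*(H^1(\Gamma_\alpha, \R))\big)\bigg)^{\compl}.
\end{equation*}
To finish, I would identify $H^1(\Gamma_\alpha, \R)$ with $H^1(\Sigma_{0, s_\alpha}, \R)$: passing to $\R$-coefficients kills the torsion in $(\Gamma_\alpha)_{\ab}$, leaving $\R^{n_\alpha} = \R^{s_\alpha - 1}$, which is exactly $H^1(\Sigma_{0, s_\alpha}, \R)$; under this identification, $(f_\alpha)_\sharp^* = f_\alpha^*$ on $H^1(-,\R)$.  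Substituting gives the claimed bound \eqref{eq:sigma-pen-arr}.

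I do not expect a genuine obstacle here, since all the hard work has been packaged into Theorems \ref{thm:sigma-pencils} and \ref{thm:v1a}; the only point requiring care is the bookkeeping identification $(f_\alpha)_\sharp^*(H^1(\Gamma_\alpha,\R)) = f_\alpha^*(H^1(\Sigma_{0,s_\alpha}, \R))$, together with ensuring the $s_\alpha = 2$ case is not overlooked (this is precisely where one needs the ``torsion-translated'' portion of $\VV^1(\Gamma_\alpha)$, and where Corollary \ref{cor:sigma-empty-2}/Lemma \ref{lemma:trop-all} rather than the exponential tangent cone bound is essential).
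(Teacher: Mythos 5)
Your proposal is correct and follows essentially the same route as the paper: invoke Theorem \ref{thm:v1a} to obtain the finite family of surjections $(f_\alpha)_\sharp\colon \pi_1(M)\surj \Gamma_\alpha$, use formula \eqref{eq:v1pi} (split by the value of $s_\alpha$) to verify that each $\VV^1(\Gamma_\alpha)$ contains a component of $\T_{\Gamma_\alpha}$, and apply Theorem \ref{thm:sigma-pencils}. Your explicit case split and the closing identification $H^1(\Gamma_\alpha,\R)=H^1(\Sigma_{0,s_\alpha},\R)$ are just slightly more detailed versions of steps the paper leaves implicit.
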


\begin{proof}
By Theorem \ref{thm:v1a}, there is a finite set, $\operatorname{Fib}(M)$, 
indexing equivalence classes of orbifold fibrations,  
$f_{\alpha}\colon M\to (\Sigma_{0,s_\alpha},\mathbf{m}_{\alpha})$, 
for which the induced homomorphism from $G=\pi_1(M)$ to 
$G_{\alpha} =\pi_1^{\orb}(\Sigma_{0,s_\alpha},\mathbf{m}_{\alpha})$ 
is surjective. From the definition of this set, we also 
know that $\chi^{\orb}(\Sigma_{0,s_\alpha},\mathbf{m}_{\alpha})<0$, 
for each $\alpha$. 
Thus, by formula \eqref{eq:v1pi}, $\VV^1(T_{G_\alpha})$ is equal to either 
$\T_{G_{\alpha}}$ or $\T_{G_{\alpha}}\setminus \T^0_{G_{\alpha}}$.
In either case, $\VV^1(T_{G_\alpha})$  contains a connected component 
of $\T_{G_{\alpha}}$. Hence, Theorem \ref{thm:sigma-pencils} applies, 
and gives the desired conclusion.
\end{proof}

\subsection{Pointed multinets}
\label{subsec:pointed}
Following \cite{DeS14}, we describe a combinatorial 
construction which produces translated subtori 
in the first characteristic variety of a certain class of arrangements. 
Fix a hyperplane $H\in \A$, and let  $\A'=\A\setminus \{H\}$.  
A {\em pointed multinet}\/  on $\A$ consists of a multinet  on $\A$, 
together with a distinguished hyperplane $H\in \A$ such that 
$m_H>1$ and $m_H \mid n_X$ for each $X\in \XX$ such that $X\subset H$.  
Given these data, let $M'$ be the complement 
of $\A'$; then $\VV^1(M')$ has a component which is a $1$-dimensional 
subtorus of $\T_{M'}$, translated by a character of order $m_H$. 

Applying Proposition \ref{prop:tau1 trop} to this setup, we obtain 
the following result.

\begin{proposition}
\label{prop:del-sigma}
Let $\A$ be an arrangement which admits a pointed multinet, and let $\A'$ be the 
arrangement obtained from $\A$ by deleting the distinguished hyperplane $H$.  
Then 
\begin{enumerate}
\item \label{p1}
The resonance variety $\RR^1(M',\R)$
is properly contained in $\Trop(\VV^1(M'))$.
\item \label{p2}
The BNS invariant $\Sigma^1(M')$ is properly contained 
in $S(\RR^1(M',\R))^{\compl}$.
\end{enumerate}
\end{proposition}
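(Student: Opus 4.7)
The plan is to derive part \eqref{p1} from Proposition \ref{prop:tau1 trop}\eqref{rt2} using formality of arrangement complements, and then to derive part \eqref{p2} by combining part \eqref{p1} with the tropical upper bound of Theorem \ref{thm:bns-trop} and the (automatic) symmetry of the resonance variety.

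For part \eqref{p1}, I would first invoke formality of $M'$ together with the Tangent Cone Theorem \eqref{eq:tcone} to identify $\tau_1^{\R}(\VV^1(M'))$ with $\RR^1(M',\R)$. The pointed-multinet hypothesis, as recalled from \cite{DeS14} in \S\ref{subsec:pointed}, produces a $1$-dimensional subtorus $T \subset \T_{M'}^{0}$ and a torsion character $\rho$ of order $m_H > 1$ such that $\rho T$ is an irreducible component of $\VV^1(M')$ whereas $T$ itself is not contained in $\VV^1(M')$ (since $\rho T$ is genuinely translated, not a coset lying inside a larger subtoric component through $1$). Proposition \ref{prop:tau1 trop}\eqref{rt2} then applies directly and yields $\RR^1(M',\R) = \tau_1^{\R}(\VV^1(M')) \subsetneq \Trop(\VV^1(M'))$.

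For part \eqref{p2}, I would use the resonance upper bound \eqref{eq:bns-res bound}, which applies since $M'$ is formal, to get $\Sigma^1(M',\Z) \subseteq S(\RR^1(M',\R))^{\compl}$; because $\RR^1(M',\R)$ is a union of linear subspaces, it is symmetric, so passing to the antipode gives $\Sigma^1(M') = -\Sigma^1(M',\Z) \subseteq S(\RR^1(M',\R))^{\compl}$ as well. To produce a witness to strictness, I would pick any $\chi \in \Trop(\VV^1(M')) \setminus \RR^1(M',\R)$, which exists by part \eqref{p1}, and normalize it to an element of $S^{n-1}$. Applied to $\chi$, Theorem \ref{thm:bns-trop} gives $\chi \notin \Sigma^1(M',\Z)$, and therefore $-\chi \notin \Sigma^1(M')$. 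On the other hand, since $\chi \notin \RR^1(M',\R)$ and $\RR^1(M',\R)$ is symmetric, we also have $-\chi \in S(\RR^1(M',\R))^{\compl}$. Thus $-\chi$ lies in the right-hand side of \eqref{p2} but not in $\Sigma^1(M')$, establishing the strict containment.

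The main obstacle is the delicate step in part \eqref{p1}: verifying that the non-inclusion hypothesis $T \not\subset \VV^1(M')$ of Proposition \ref{prop:tau1 trop}\eqref{rt2} is actually furnished by the pointed-multinet construction of \cite{DeS14}. Once this point is extracted cleanly from \cite{DeS14}, everything else is a routine application of the general tropical and resonance machinery assembled earlier (formality, Theorem \ref{thm:bns-trop}, Proposition \ref{prop:tau1 trop}, and the intrinsic symmetry of $\RR^1$).
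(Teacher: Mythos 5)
Your proposal is correct and follows the paper's intended route: the paper gives only a one-line justification (``Applying Proposition~\ref{prop:tau1 trop} to this setup''), and you have correctly filled in the details---formality plus the Tangent Cone Theorem~\eqref{eq:tcone} to identify $\tau_1^{\R}(\VV^1(M'))$ with $\RR^1(M',\R)$, the pointed-multinet component $\rho T$ from~\cite{DeS14} feeding Proposition~\ref{prop:tau1 trop}\eqref{rt2} for part~\eqref{p1}, and the resonance bound~\eqref{eq:bns-res bound} combined with the intrinsic symmetry of $\RR^1(M',\R)$ and Theorem~\ref{thm:bns-trop} to produce a witness for strictness in part~\eqref{p2}. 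The point you flag---that $T\not\subset\VV^1(M')$ must be extracted from the pointed-multinet construction of~\cite{DeS14}---is indeed the only non-routine verification, and the paper also leaves it implicit.
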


Taking $\A$ to be the reflection arrangement of type $\operatorname{B}_3$, 
and $\A'$ to be the deleted $\operatorname{B}_3$ arrangement from \cite{Su-ta02} 
recovers the result that $\Sigma^1(M') \subsetneqq S(\RR^1(M',\R))^{\compl}$, 
proved in a different way in \cite[Example~11.8]{Su-pisa12}.  
In view of Proposition \ref{prop:del-sigma}, a more refined question to ask, 
then, is the following.

\begin{question}
\label{quest:bns-trop-arr}
For a complex hyperplane arrangement $\A$ 
with complement $M=M(\A)$, is the BNS invariant 
$\Sigma^1(M)$ equal to $S(\Trop(\VV^1(M)))^{\compl}$?
\end{question} 

\begin{ack}
We wish to thank the referee for a careful reading of the manuscript and for 
many valuable comments and suggestions that helped us to improve both 
the substance and the exposition of the paper.
\end{ack}

\newcommand{\arxiv}[1]
{\texttt{\href{http://arxiv.org/abs/#1}{arXiv:#1}}}
\newcommand{\arx}[1]
{\texttt{\href{http://arxiv.org/abs/#1}{arXiv:}}
\texttt{\href{http://arxiv.org/abs/#1}{#1}}}
\newcommand{\doi}[1]
{\texttt{\href{http://dx.doi.org/#1}{doi:#1}}}
\renewcommand{\MR}[1]
{\href{http://www.ams.org/mathscinet-getitem?mr=#1}{MR#1}}


\begin{thebibliography}{00}

\bibitem{Ar97} D.~Arapura,
{\em Geometry of cohomology support loci for local systems.
\textup{I}.}, J. Algebraic Geom. \textbf{6} (1997), no.~3, 563--597.
\MR{1487227} 

\bibitem{ACM} E.~Artal Bartolo, J.~Cogolludo, D.~Matei,
\href{https://dx.doi.org/10.2140/gt.2013.17.273}%
{\em Characteristic varieties of quasi-projective manifolds and orbifolds},
Geom. Topol. \textbf{17} (2013), no.~1, 273--309.
\MR{3035328}

\bibitem{Be92} A.~Beauville, 
\href{https://dx.doi.org/10.1007/BFb0094507}%
{\em Annulation du $H\sp 1$ pour les fibr\' es en droites plats}, in: 
Complex algebraic varieties (Bayreuth, 1990), 1--15,
Lecture Notes in Math., vol.~1507, Springer, Berlin, 1992. 
\MR{1178716} 

\bibitem{Bi07} R.~Bieri,
\href{https://dx.doi.org/10.1016/j.jpaa.2006.02.003}%
{\em Deficiency and the geometric invariants of a group}\/  
(with an appendix by P.~Schweitzer), 
J. Pure Appl. Alg. \textbf{208} (2007), no.~3, 951--959. 
\MR{2283437} 

\bibitem{BGr} R.~Bieri, J.~Groves, 
\href{https://dx.doi.org/10.1515/crll.1984.347.168}%
{\em The geometry of the set of characters induced by valuations}, 
J. Reine Angew. Math. \textbf{347} (1984), 168--195.
\MR{0733052} 

\bibitem{BNS} R.~Bieri, W.~Neumann, R.~Strebel, 
\href{https://dx.doi.org/10.1007/BF01389175}%
{\em A geometric invariant of discrete groups}, 
Invent. Math. \textbf{90} (1987), no.~3, 451--477. 
\MR{0914846}  

\bibitem{BR} R.~Bieri, B.~Renz,
\href{https://dx.doi.org/10.1007/BF02566775}%
{\em Valuations on free resolutions and higher geometric 
invariants of groups}, Comment. Math. Helvetici 
\textbf{63} (1988), no.~3, 464--497.
\MR{0960770}  

\bibitem{BK} T.~Bogart, E.~Katz, 
\href{https://dx.doi.org/10.1137/110825558}%
{\em Obstructions to lifting tropical curves in hypersurfaces}, 
SIAM J. Discrete Math. \textbf{26} (2012), no.~3, 1050--1067. 
\MR{3022123}

\bibitem{Br} K.S.~Brown,
\href{https://dx.doi.org/10.1007/BF01389176}%
{\em Trees, valuations, and the {B}ieri-{N}eumann-{S}trebel 
invariant}, Invent. Math. \textbf{90} (1987), no.~3, 479--504. 
\MR{0914847}  

\bibitem{BW12} N.~Budur, B.~Wang,
\href{https://dx.doi.org/10.1112/S0010437X14007970}%
{\em Cohomology jump loci of quasi-projective varieties},
Ann. Sci. \'{E}cole Norm. Sup. \textbf{48} (2015), no.~1.
\MR{3335842}

\bibitem{Cat91} F.~Catanese, 
\href{https://dx.doi.org/10.1007/BF01245076}%
{\em Moduli and classification of irregular Kaehler manifolds 
(and algebraic varieties) with Albanese general type fibrations}, 
Invent. Math. \textbf{104} (1991), no.~2, 263--289. 
\MR{1098610} 

\bibitem{CCM} F.~Catanese, C.~Ciliberto, M.~Mendes Lopes,
\href{https://dx.doi.org/10.1090/S0002-9947-98-01948-5}%
{\em On the classification of irregular surfaces of general type
with nonbirational bicanonical map},
Trans. Amer. Math. Soc. \textbf{350} (1998), no.~1, 275--308.
\MR{1422597}  

\bibitem{CTY} M.A.~Cueto,  E.A.~Tobis, J.~Yu, 
\href{https://dx.doi.org/10.1016/j.jsc.2010.06.011}%
{\em An implicitization challenge for binary factor analysis}, 
J. Symbolic Comput. \textbf{45} (2010), no.~12, 1296--1315. 
\MR{2733380} 

\bibitem{De08} T.~Delzant,
\href{https://dx.doi.org/10.1007/s00039-008-0679-2}%
{\em Trees, valuations, and the {G}reen-{L}azarsfeld sets}, 
Geom. Funct. Anal. \textbf{18} (2008), no.~4, 1236--1250. 
\MR{2465689}  

\bibitem{De10} T.~Delzant,
\href{https://dx.doi.org/10.1007/s00208-009-0468-8}%
{\em L'invariant de {B}ieri {N}eumann {S}trebel des groupes 
fondamentaux des vari\'{e}t\'{e}s k\"{a}hl\'{e}riennes}, 
Math. Annalen \textbf{348} (2010), no.~1, 119--125.
\MR{2657436} 

\bibitem{DeS14} G.~Denham, A.I.~Suciu,
\href{https://dx.doi.org/10.1112/plms/pdt058}%
{\em Multinets, parallel connections, and {M}ilnor fibrations  
of arrangements}, Proc. London Math. Soc. 
\textbf{108} (2014), no.~6, 1435--1470.
\MR{3218315}

\bibitem{DSY}
 G.~Denham, A.I.~Suciu, S.~Yuzvinsky, 
 \href{https://dx.doi.org/10.1007/s00029-017-0343-5}%
{\em Abelian duality and propagation of resonance},
Selecta Math. (N.S.) \textbf{23} (2017), no.~4, 2331--2367.    
\MR{3703455}

\bibitem{Di07} A.~Dimca, 
\href{https://dx.doi.org/10.4171/RLM/503}%
{\em Characteristic varieties and constructible sheaves}, 
Rend. Lincei Mat. Appl. \textbf{18} (2007), no.~4, 365--389. 
\MR{2349994} 

\bibitem{DP-ccm} A.~Dimca, \c{S}.~Papadima,
\href{https://dx.doi.org/10.1142/S0219199713500259}%
{\em Non-abelian cohomology jump loci from an analytic viewpoint}, 
Commun. Contemp. Math. \textbf{16} (2014), 
no.~4, 1350025 (47 p). 
\MR{3231055}

\bibitem{DPS-imrn} A.~Dimca, S.~Papadima, A.I.~Suciu,
\href{https://dx.doi.org/10.1093/imrn/rnm119}%
{\em Alexander polynomials: {E}ssential variables and 
multiplicities}, Int. Math. Res. Not. IMRN \textbf{2008}, 
no.~3, Art. ID rnm119, 36 pp. 
\MR{2416998}

\bibitem{DPS-duke} A.~Dimca, S.~Papadima, A.~Suciu,
 \href{https://dx.doi.org/10.1215/00127094-2009-030}%
{\em Topology and geometry of cohomology jump loci}, 
Duke Math. J. \textbf{148} (2009), no.~3, 405--457.
\MR{2527322} 

\bibitem{Dun} N.~Dunfield, 
\href{https://dx.doi.org/10.2140/pjm.2001.200.43}%
{\em Alexander and Thurston norms of fibered $3$-manifolds}, 
Pacific J. Math. \textbf{200} (2001), no.~1, 43--58. 
\MR{1863406}

\bibitem{EL97} L.~Ein, R.~Lazarsfeld, 
\href{https://dx.doi.org/10.1090/S0894-0347-97-00223-3}%
{\em Singularities of theta divisors and the birational geometry 
of irregular varieties}, J. Amer. Math. Soc. \textbf{10} (1997), 
no.~1, 243--258. 
\MR{1396893}  

\bibitem{EKL} M.~Einsiedler, M.~Kapranov, D.~Lind, 
\href{https://dx.doi.org/10.1515/CRELLE.2006.097}%
{\em Non-{A}rchimedean amoebas and tropical varieties},  
J. Reine Angew. Math. \textbf{601} (2006), 139--157. 
\MR{2289207} 

\bibitem{EN} D.~Eisenbud, W.~Neumann, 
\href{http://press.princeton.edu/titles/2356.html}%
{\em Three-dimensional link theory and invariants of plane curve
singularities}, Annals of Math. Studies, vol.~110, Princeton 
University Press, Princeton, NJ, 1985.
\MR{0817982} 

\bibitem{FY07} M.~Falk, S.~Yuzvinsky,
\href{https://doi.org/10.1112/S0010437X07002722}%
{\em Multinets, resonance varieties, and pencils of plane curves},
Compositio Math. \textbf{143} (2007), no.~4, 1069--1088.
\MR{2339840}  

\bibitem{Far} M.~Farber, 
\href{https://dx.doi.org/10.1090/surv/108}%
{\em Topology of closed one-forms}, 
Math. Surveys Monogr., vol.~108, Amer. Math. Soc., 
Providence, RI, 2004. 
\MR{2034601}  

\bibitem{FGS} M.~Farber, R.~Geoghegan, D.~Sch\"{u}tz, 
\href{https://dx.doi.org/10.1070/RM2010v065n01ABEH004663}%
{\em Closed $1$-forms in topology and geometric group theory},
Russian Math. Surveys \textbf{65} (2010), no.~1, 143--172.
\MR{2655245} 

\bibitem{FSY} M.~Farber, A.~Suciu, S.~Yuzvinsky, 
\href{https://dx.doi.org/10.4171/OWR/2007/40}%
{\em Mini-Workshop: Topology of closed one-forms and 
cohomology jumping loci}, Oberwolfach Reports \textbf{4} 
(2007), no.~3, 2321--2360.
\MR{2432117}

\bibitem{FT20} S.~Friedl, S.~Tillmann, 
\href{https://doi.org/10.5802/aif.3325}%
{\em Two generator one-relator groups and marked polytopes}, 
Ann. Inst. Fourier \textbf{70} (2020), no.~2, 831--879.
\MR{4105952}

\bibitem{FV19} S.~Friedl, S.~Vidussi, 
\href{https://doi.org/10.1017/nmj.2019.32}%
{\em Virtual algebraic fibrations of K\"{a}hler groups}, 
Nagoya Math. J. (2019). 

\bibitem{GL87} M.~Green, R.~Lazarsfeld, 
\href{https://dx.doi.org/10.1007/BF01388711}%
{\em Deformation theory, generic vanishing theorems
and some conjectures of Enriques, Catanese and Beauville}, 
Invent. Math. \textbf{90} (1987), no.~2, 389--407.
\MR{0910207}  

\bibitem{GL91} M.~Green, R.~Lazarsfeld, 
\href{https://dx.doi.org/10.2307/2939255}%
{\em Higher obstructions to deforming cohomology groups 
of line bundles}, J. Amer. Math. Soc. \textbf{4} (1991), 
no.~1, 87--103.
\MR{1076513}

\bibitem{Hi97} E.~Hironaka,
\href{https://doi.org/10.5802/aif.1573}%
{\em Alexander stratifications of character varieties}, Ann. Inst. 
Fourier (Grenoble)  \textbf{47} (1997), no.~2, 555--583.
\MR{1450425} 

\bibitem{JW}  M.~Jankins, W.~Neumann, 
\href{https://www.math.columbia.edu/~neumann/preprints/neumann_lectures on seifert manifolds.pdf}%
{\em Lectures on Seifert manifolds}, Brandeis Lecture Notes, vol.~2. 
Brandeis University, Waltham, MA, 1983. 
\MR{0741334} 

\bibitem{Ki19}  D.~Kielak, 
\href{https://doi.org/10.1007/s00222-019-00919-9}%
{\em The Bieri--Neumann--Strebel invariants via Newton polytopes}, 
Invent. Math. \textbf{219} (2019), no.~3, 1009--1068.
\MR{4055183}

\bibitem{KMM} N.~Koban, J.~McCammond, J.~Meier, 
\href{https://dx.doi.org/10.4171/GGD/323}%
{\em The BNS-invariant for the pure braid groups}, 
Groups, Geometry, and Dynamics \textbf{9} (2015), 
no.~3, 665--682.
\MR{3420539}
 
\bibitem{KP15} T.~Kohno, A.~Pajitnov, 
\href{https://doi.org/10.1515/forum-2013-0032}%
{\em Circle-valued {M}orse theory for complex hyperplane 
arrangements}, Forum Math. \textbf{27} (2015), no.~4, 2113--2128. 
\MR{3365791}

\bibitem{Lg}  D.G.~Long, 
{\em The Laurent norm},  preprint (2008), \arxiv{0808.1058}.

\bibitem{MS}  D.~Maclagan, B.~Sturmfels, 
\href{http://bookstore.ams.org/gsm-161/}%
{\em Introduction to tropical geometry}, 
Grad. Stud. Math., vol.~161, Amer. Math. Soc., 
Providence, RI, 2015.
\MR{3287221}

\bibitem{McM} C.T.~McMullen,
\href{https://dx.doi.org/10.1016/S0012-9593(02)01086-8}%
{\em The Alexander polynomial of a $3$-manifold and the 
Thurston norm on cohomology}, Ann. Sci. \'{E}cole Norm. Sup. 
\textbf{35} (2002), no.~2, 153--171. 
\MR{1914929} 

\bibitem{MMV} J.~Meier, H.~Meinert, L.~VanWyk, 
\href{https://dx.doi.org/10.1007/s000140050044}%
{\em Higher generation subgroup sets and the 
$\Sigma$-invariants of graph groups}, Comment. 
Math. Helv. \textbf{73} (1998), no.~1, 22--44.
\MR{1610579}

\bibitem{Mi62} J.~Milnor, 
\href{https://dx.doi.org/10.2307/1970268}%
{\em A duality theorem for Reidemeister torsion}, 
Ann. of Math. \textbf{76} (1962), no.~1, 137--147.
\MR{0141115 }

\bibitem{NeumannRaymond} W.~Neumann, F.~Raymond,
\href{https://doi.org/10.1007/BFb0061699}%
{\em Seifert manifolds, plumbing, $\mu$-invariant and orientation reversing 
maps}, in: Algebraic and geometric topology (Proc. Sympos., Univ. California, 
Santa Barbara, Calif., 1977), pp. 163--196, Lecture Notes in Math., vol.~664, 
Springer, Berlin, 1978.
\MR{0518415}

\bibitem{Novikov} S.P.~Novikov, 
\href{http://mi.mathnet.ru/eng/dan44681}%
{\em Multivalued functions and functionals. An analogue of the Morse
theory}, Dokl. Akad. Nauk SSSR \textbf{260} (1981), no.~1, 31--35.
\MR{0630459}

\bibitem{OS} B.~Osserman, S.~Payne, 
\href{https://www.elibm.org/article/10000294}%
{\em Lifting tropical intersections}, 
Doc. Math. \textbf{18} (2013), 121--175. 
\MR{3064984}

\bibitem{PS-adv} S.~Papadima, A.I.~Suciu,
\href{https://dx.doi.org/10.1016/j.aim.2008.09.008}%
{\em Toric complexes and Artin kernels}, 
Adv. Math. \textbf{220} (2009), no.~2, 441--477. 
\MR{2466422}

\bibitem{PS-plms} S.~Papadima, A.I.~Suciu,
\href{https://dx.doi.org/10.1112/plms/pdp045}%
{\em Bieri--{N}eumann--{S}trebel--{R}enz invariants and 
homology jumping loci}, Proc. Lond. Math.~Soc. 
\textbf{100} (2010), no.~3, 795--834.
\MR{2640291}  

\bibitem{PS-mrl} S.~Papadima, A.I.~Suciu,
\href{http://dx.doi.org/10.4310/MRL.2014.v21.n4.a13}%                                                                                                                                                                                                                                                                                                                                                                                                       
{\em Jump loci in the equivariant spectral sequence}, 
Math. Res. Lett. \textbf{21} (2014), no.~4, 863--883.
\MR{3275650} 

\bibitem{PeY} J.~Pereira, S.~Yuzvinsky, 
\href{https://doi.org/10.1016/j.aim.2008.05.014}%
{\em Completely reducible hypersurfaces in a pencil}, 
Adv.  Math.  \textbf{219} (2008), no.~2, 672--688.
\MR{2435653}

\bibitem{Pa09} S.~Payne,
\href{https://doi.org/10.1007/s00209-008-0374-x}%
{\em Fibers of tropicalization}, Math. Z. \textbf{262} (2009), 
no.~2, 301--311.
\MR{2504879}

\bibitem{Ra12} J.~Rabinoff,
\href{https://doi.org/10.1016/j.aim.2012.02.003}%
{\em Tropical analytic geometry, Newton polygons, and 
tropical intersections}, Adv. Math. \textbf{229} (2012), 
no.~6, 3192--3255.
\MR{2900439}

\bibitem{Scott} P.~Scott, 
\href{https://dx.doi.org/10.1112/blms/15.5.401}%
{\emph{The geometries of {$3$}-manifolds}}, Bull. 
London Math. Soc. \textbf{15} (1983), no.~5, 401--487.
\MR{705527}

\bibitem{Sk87}  J.-Cl. Sikorav,
\href{http://perso.ens-lyon.fr/jean-claude.sikorav/textes/Novikov1987.pdf}%
{\em Homologie de {N}ovikov associ\'{e}e \`{a} une classe 
de cohomologie r\'{e}ele de degr\'{e} un}, Th\`{e}se d'Etat, 
Universit\'{e} Paris-Sud, Orsay (1987).

\bibitem{Si93}  C.~Simpson, 
\href{https://doi.org/10.24033/asens.1675 }%
{\em Subspaces of moduli spaces of rank one local systems}, 
Ann. Sci. \'{E}cole Norm. Sup. \textbf{26} (1993), no.~3, 361--401.
\MR{1222278}  

\bibitem{Su-ta02} A.I.~Suciu,
\href{https://dx.doi.org/10.1016/S0166-8641(01)00052-9}%
{\em Translated tori in the characteristic varieties of
complex hyperplane arrangements}, Topology Appl. 
\textbf{118} (2002), no.~1-2, 209--223.  
\MR{1877726}  

\bibitem{Su-pisa12} A.I.~Suciu,
\href{https://dx.doi.org/10.1007/978-88-7642-431-1_21}%
{\em Geometric and homological finiteness in free 
abelian covers}, in: {\em Configuration Spaces: Geometry, 
Combinatorics and Topology (Centro De Giorgi, 2010)}, 461--501, 
Publications of the Scuola Normale Superiore, vol.~14, 
Edizioni della Normale, Pisa, 2012. 
\MR{3203652}

\bibitem{Su-imrn} A.I.~Suciu,
\href{https://dx.doi.org/10.1093/imrn/rns246}%
{\em Characteristic varieties and Betti numbers of free 
abelian covers}, Intern. Math. Res. Notices \textbf{2014} 
(2014), no. 4, 1063--1124.
\MR{3168402}.

\bibitem{Su-tcone}  A.I.~Suciu,
\href{https://dx.doi.org/10.1007/978-3-319-31580-5_1}%
{\em Around the tangent cone theorem}, in: 
{\em Configuration Spaces: Geometry, Topology and Representation Theory}, 
1--39, Springer INdAM series, vol.~14, Springer, Cham, 2016. 
\MR{3615726}. 

\bibitem{Su-tc3d} A.I.~Suciu,
\href{https://doi.org/10.1007/s00229-020-01264-5}%
{\em Cohomology jump loci of $3$-dimensional manifolds}, 
Manuscripta Math. (2020).   %\arxiv{1901.01419v1}.

\bibitem{SYZ-jpaa} A.I.~Suciu, Y.~Yang, G.~Zhao, 
\href{https://dx.doi.org/10.1016/j.jpaa.2012.06.025}%
{\em  Intersections of translated algebraic subtori}, 
J. Pure Appl. Algebra \textbf{217} (2013), no.~3, 481--494.
\MR{2974227}

\bibitem{SYZ-pisa} A.I.~Suciu, Y.~Yang, G.~Zhao, 
\href{https://dx.doi.org/10.2422/2036-2145.201205_008}%
{\em  Homological finiteness of abelian covers}, 
Annali della Scuola Normale Superiore di Pisa \textbf{14} (2015), 
no.~1, 101--153.
\MR{3379489}

\bibitem{Th} W. P. Thurston,
\href{https://dx.doi.orgs/10.1090/memo/0339}%
{\em A norm for the homology of $3$-manifolds},
Mem. Amer. Math. Soc., \textbf{59} (1986), no.~339.
\MR{0823443}

\bibitem{Tu75} V.G.~Turaev,
\href{https://dx.doi.org/10.1070/SM1975v026n03ABEH002483}%
{\em The Alexander polynomial of a three-dimensional manifold}, 
Mat. Sb. (N.S.) \textbf{97(139)} (1975), no.~3(7), 341--359. 
\MR{0383425} 

\bibitem{Tu} V.G.~Turaev, 
\href{https://dx.doi.orgs/10.1007/978-3-0348-7999-6}%
{\em Torsions of $3$-dimensional manifolds}, 
Progress in Mathematics, vol.~208. Birkh\"{a}user 
Verlag, Basel, 2002. 
\MR{1958479}

\bibitem{Wa16} B.~Wang, 
\href{https://dx.doi.org/10.4310/MRL.2016.v23.n2.a12}%
{\em Torsion points on the cohomology jump loci of compact K\"{a}hler 
manifolds}, Math. Res. Lett. \textbf{23} (2016), no.~2, 545--563.
\MR{3512898}

\bibitem{Weibel}  C.~Weibel,
\href{http://dx.doi.org/10.1017/CBO9781139644136}%
{\em An introduction to homological algebra}, 
Cambridge Studies in Advanced Mathematics, vol.~38. 
Cambridge University Press, Cambridge, 1994.
\MR{1269324}

\bibitem{Yu} S.~Yuzvinsky, 
\href{https://dx.doi.org/10.1090/S0002-9939-08-09753-0}%
{\em A new bound on the number of special fibers in a 
pencil of curves},  Proc. Amer. Math. Soc. \textbf{137} (2009), 
no.~5, 1641--1648. 
\MR{2470822} 

\bibitem{Ziegler} G.~Ziegler, 
\href{https://dx.doi.org/10.1007/978-1-4613-8431-1}%
{\em Lectures on polytopes}. Grad. Texts in Math., vol.~152. 
Springer-Verlag, New York, 1995.
\MR{1311028}

\end{thebibliography}
\end{document}